\newcommand{\eqdef}{\xlongequal{\text{($\ast \ast$)}}}%
\newtheorem{question}{Question}
\newtheorem{theorem}{Theorem}[section]
\newtheorem{lemma}[theorem]{Lemma}
\newtheorem{prop}[theorem]{Proposition}
\newtheorem{cor}[theorem]{Corollary}
\theoremstyle{definition}
\newtheorem{definition}[theorem]{Definition}
\newtheorem{example}[theorem]{Example}
\newtheorem{remark}[theorem]{Remark}
\theoremstyle{remark} 
\newtheorem{notation}[theorem]{Notation}
\newcounter{step}
\DeclareRobustCommand*{\mfaktor}[3][]
{
	{ \mathpalette{\mfaktor@impl@}{{#1}{#2}{#3}} }
}
\newcommand*{\mfaktor@impl@}[2]{\mfaktor@impl#1#2}
\newcommand*{\mfaktor@impl}[4]{
	\settoheight{\faktor@zaehlerhoehe}{\ensuremath{#1#2{#3}}}%
	\settoheight{\faktor@nennerhoehe}{\ensuremath{#1#2{#4}}}%
	\raisebox{-0.5\faktor@zaehlerhoehe}{\ensuremath{#1#2{#3}}}%
	\mkern-4mu\diagdown\mkern-5mu%
	\raisebox{0.5\faktor@nennerhoehe}{\ensuremath{#1#2{#4}}}%
}
\newcommand{\Q}{{\mathbb Q}}
\newcommand{\bP}{{\mathbb P}}
\newcommand{\bQ}{{\mathbb Q}}
\newcommand{\calK}{\mathcal K}
\newcommand{\calO}{\mathcal O}
\newcommand{\calM}{\mathcal M}
\newcommand{\calH}{\mathcal H}
\newcommand{\calL}{\mathcal L}
\newcommand{\calU}{\mathcal U}
\newcommand{\calX}{\mathcal X}
\newcommand{\calC}{\mathcal C}
\newcommand{\calY}{\mathcal Y}
\newcommand{\calZ}{\mathcal Z}
\newcommand{\calD}{\mathcal D}
\newcommand{\calV}{\mathcal V}
\newcommand{\sD}{\mathscr{D}}
\newcommand{\sK}{\mathscr{K}}
\newcommand{\sM}{\mathscr{M}}
\newcommand{\sN}{\mathscr{N}}
\newcommand{\sX}{\mathscr{X}}
\newcommand{\sY}{\mathscr{Y}}
\newcommand{\tildeD}{\widetilde{D}}
\newcommand{\tildeX}{\widetilde{X}}
\newcommand{\Supp}{\operatorname{Supp}}
\newcommand{\Diff}{\mathrm{Diff}}
\newcommand{\Spec}{\operatorname{Spec}}
\newcommand{\Proj}{\operatorname{Proj}}
\newcommand{\bfa}{\mathbf{a}}
\newcommand{\bfv}{\mathbf{v}}
\newcommand{\bfb}{\mathbf{b}}
\newcommand{\bfc}{\mathbf{c}}
\author{Kenneth Ascher}
\address{Department of Mathematics, University of California, Irvine, CA, 92697}
\email{kascher@uci.edu}
\author{Dori Bejleri}
\address{Harvard University,
	One Oxford Street,
	Cambridge, MA 02138 USA}
\email{bejleri@math.harvard.edu}
\author{Giovanni Inchiostro}
\address{University of Washington, Department of Mathematics, Box 354350, Seattle, WA 98195 USA}
\email{ginchios@uw.edu}
\author{Zsolt Patakfalvi}
\address{\'Ecole Polytechnique F\'ed\'erale de Lausanne (EPFL), MA C3 635, Station 8, 1015 Lausanne, Switzerland}
\email{zsolt.patakfalvi@epfl.ch}
\title{Wall crossing for moduli of stable log pairs}
\begin{document}
	
	\begin{abstract} 
		We prove, under suitable conditions, that there exist wall-crossing and reduction morphisms for moduli spaces of stable log pairs in all dimensions as one varies the coefficients of the divisor. 
		
	\end{abstract}

	\maketitle

	\section{Introduction}
	
	Compactifying moduli spaces is a central problem of algebraic geometry. It has long been apparent that moduli spaces often admit different compactifications depending on some choice of parameters, and so it is natural to ask how these compactifications and their universal families are related as one varies the parameters. The goal of the present article is to answer this question for compact moduli spaces of higher dimensional \emph{stable log pairs} or \emph{stable pairs} for short. 
	
	A stable pair is a pair $(X, \sum a_i D_i)$ consisting of a variety $X$ and a $\mathbb{Q}$-divisor $\sum a_i D_i$ satisfying certain singularity and stability conditions, which we will recall below. The standard example is a smooth normal crossings pair with $0 < a_i \le 1$ and $K_X + \sum a_i D_i$ ample. Compact moduli spaces of stable pairs with fixed coefficient or \emph{weight vector} $\bfa = (a_1, \ldots, a_n)$ and fixed numerical invariants have been constructed using the tools of the minimal model program (\cite{kollarmodulibook} and Section \ref{section:moduli:of:stable:pairs}). These moduli spaces are quite large and unwieldy in general, and so in practice one studies the closure of a family of interest inside the larger moduli space.  Theorem \ref{theorem:summary} below summarizes our main results in a simplified, but typical situation. We will state our general results in Section \ref{sec:mainthm}. 
	
	\begin{theorem}\label{theorem:summary} Let $(X,D_1, \ldots, D_n) \to B$ be a family of smooth normal crossings pairs over a smooth connected base $B$ and let $P$ be a finite, rational polytope of weight vectors $\bfa = (a_1, \ldots, a_n)$ such that $a_i < 1$ and $(X, \sum a_iD_i) \to B$ is a family of stable pairs for each $\bfa \in P$. Let $\sN_\bfa$ denote the normalized closure of the image of $B$ in the moduli space of $\bfa$-weighted stable pairs with universal family of stable pairs $(\calX_\bfa, \sum a_i \calD_i) \to \sN_{\bfa}$. Then there exists a finite, rational polyhedral wall-and-chamber decomposition of $P$ such that the following hold.
		\begin{enumerate}[(a)]
			\item For $\bfa, \bfa'$ contained in the same chamber, there are canonical isomorphisms
			$$
			\xymatrix{\calX_\bfa \ar[r]^{\cong} \ar[d] & \calX_{\bfa'} \ar[d] \\ 
				\sN_\bfa \ar[r]^{\cong} & \sN_{\bfa'}.
			}
			$$
			
			\item For $\bfa, \bfb \in P$ contained in different chambers and satisfying $b_i \le a_i$ for all $i$, there are canonical birational wall-crossing morphisms
			$$
			\rho_{\bfb, \bfa} : \sN_{\bfa} \to \sN_{\bfb}
			$$
			such that for any third weight vector $\bfc$ with $c_i \le b_i$, we have $\rho_{\bfc, \bfb} \circ \rho_{\bfb, \bfa} = \rho_{\bfc, \bfa}$. Moreover, the map $\rho_{\bfb, \bfa}$ is induced by a birational map $h^{b,a} : \calX_\bfa \dashrightarrow \rho_{\bfb, \bfa}^*\calX_{\bfb}$ such that, for a generic $u \in \sN_\bfa$, the fiberwise map $h^{b,a}_u : (\calX_\bfa)_u \dashrightarrow (\calX_{\bfb})_{\rho_{\bfb, \bfa}(u)}$ is the canonical model of $((\calX_\bfa)_u, \sum b_i (\calD_i)_u)$. 
			
		\end{enumerate}
		
	\end{theorem}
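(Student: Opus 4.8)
The plan is to reduce the entire statement to the behavior of relative canonical models in families together with the finiteness of such models as $\bfa$ varies, and then to spread this out over the moduli spaces. First I would note that over the dense open $B \subseteq \sN_\bfa$ the underlying variety is the fixed family $X \to B$ for every $\bfa \in P$, because $(X, \sum a_i D_i) \to B$ is assumed stable for all weights in $P$; hence all of the asserted maps restrict to an isomorphism over $B$, and the whole content of the theorem concerns the boundary $\sN_\bfa \setminus B$ and the way the degenerate stable limits depend on $\bfa$. Since each $\sN_\bfa$ is a normal proper variety with dense open $B$, it will be enough, by normality and separatedness, to construct the maps over $B$ and then extend them across the boundary, using the valuative criterion to pin down the limits.

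Next I would analyze a single degeneration. For a DVR $R$ whose generic point maps into $B$, lift the family (after a finite base change) to a log smooth semistable model $(\calY, \calE + \sum D_i^{\calY}) \to \Spec R$, with $\calE$ the reduced special fibre and $D_i^{\calY}$ the closure of $D_i$; then for each $\bfa$ the total space of the $\bfa$-stable family over $R$ is the relative log canonical model $\Proj_R \bigoplus_m H^0(\calY, \lfloor m(K_{\calY/R} + \calE + \sum a_i D_i^{\calY})\rfloor)$, whose central fibre is the $\bfa$-stable limit. Applying the relative finiteness of log canonical models of BCHM to the polytope $P$ over $\Spec R$, the set of these relative canonical models is finite, and the weights giving a fixed model cut out a rational polyhedral chamber. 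Moreover, when $b_i \le a_i$ the boundary $\calE + \sum b_i D_i^{\calY}$ is smaller, so the $\bfb$-model is a further contraction of the $\bfa$-model; on special fibres this is exactly the canonical model map $h^{b,a}_u$ appearing in (b). This local picture is simultaneously the source of the chamber structure and of the fiberwise description.

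The heart of the argument — and the step I expect to be hardest — is to globalize both conclusions: to produce one finite rational polyhedral decomposition of $P$ valid at every boundary point, and to promote the pointwise limits into honest morphisms of the $\sN_\bfa$. For the decomposition I would stratify $\sN_\bfa$, run the above analysis at the generic point of each stratum, and take the common refinement of the resulting finitely many chamber decompositions; the properness, hence finiteness of the stratification, of $\sN_\bfa$ makes the refinement finite. For the morphisms, given $\bfb \le \bfa$ I would form the relative canonical model of the universal family $(\calX_\bfa, \sum b_i \calD_i) \to \sN_\bfa$ with respect to the weight $\bfb$, obtaining a family of $\bfb$-stable pairs over $\sN_\bfa$; its classifying map to $\sM_\bfb$ factors, by normality and its agreement over $B$ with the identity, through a morphism $\rho_{\bfb,\bfa} : \sN_\bfa \to \sN_\bfb$, and the same construction produces the birational map $h^{b,a}$ of total spaces. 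The genuine technical crux is that forming the relative canonical model must commute with base change on a family of stable pairs — in particular it must restrict correctly to the non-normal, possibly non-reduced boundary fibres — which is precisely what the general machinery of Section \ref{sec:mainthm} is designed to supply.

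The two compatibility statements then follow formally. Inside a fixed chamber the relative canonical model does not change, so the family induced over $\sN_\bfa$ is literally the same and yields the canonical isomorphisms in (a), which are the identity over $B$ and hence unique by separatedness. For the cocycle identity $\rho_{\bfc,\bfb} \circ \rho_{\bfb,\bfa} = \rho_{\bfc,\bfa}$ with $c_i \le b_i \le a_i$, one uses transitivity of canonical models — the $\bfc$-canonical model of the $\bfb$-canonical model of an $\bfa$-stable pair is its $\bfc$-canonical model — together once more with the fact that two morphisms of normal varieties agreeing on the dense open $B$ must coincide.
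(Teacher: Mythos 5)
Your skeleton is right — relative canonical models over a smooth base, finiteness from \cite[Corollary 1.1.5]{BCHM10}, descent to the moduli space, and separatedness plus density of $B$ for the compatibility statements — and you correctly locate the technical crux in base change for relative canonical models. But the step where you ``form the relative canonical model of the universal family $(\calX_\bfa, \sum b_i\calD_i) \to \sN_\bfa$ with respect to the weight $\bfb$'' in one shot is not available, and this is where the real content of the paper lives. Over the singular moduli base the total space $\calX_\bfa$ is non-normal with slc fibers, the relative MMP and finite generation of the log canonical ring are not known, and the only tool for descending a canonical model from the smooth base $B$ to $\sM$ is cohomology-and-base-change via Kawamata--Viehweg vanishing, which requires $K_{\calX}+\bfb\calD$ to already be nef on the universal family. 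That nefness holds only for weights in the \emph{closure of the current chamber}, which forces the paper to proceed wall by wall (Theorem \ref{theorem:flip-like:morphisms}), producing for each wall $t_i$ a pair of morphisms $\sM_{t_i-\varepsilon} \xrightarrow{\beta_{t_i}} \sM_{t_i} \xleftarrow{\alpha_{t_i}} \sM_{t_i+\varepsilon}$. The reduction morphism is then $\alpha \circ \beta^{-1} \circ \alpha \circ \cdots$, and inverting the $\beta_{t_i}$ is a substantial theorem: one must prove they are quasi-finite (Theorem \ref{thm:finite}, via Matsusaka--Mumford and a count of divisors of negative discrepancy), proper, representable and birational (Theorem \ref{thm:reduction:morphism:birational:for:ZMT}), and then apply Zariski's main theorem on normalizations. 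Your proposal omits this inversion entirely; note that $\beta_{t_i}$ is genuinely non-injective on $\sM$ (Section \ref{sec:normalization}), so the ``family of $\bfb$-stable pairs over $\sN_\bfa$'' you want cannot be produced by any construction that factors through $\sM_\bfa$ pointwise — this is precisely why the normalization appears in the statement, not merely a convenience.

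Two smaller gaps. First, your finiteness argument for the chamber decomposition (run BCHM over a DVR at the generic point of each stratum and take a common refinement) is not justified as stated: you would need a finite constructible stratification on which the local decomposition is constant, which is itself something to prove. The paper instead compactifies $B$ to a smooth proper base, runs the MMP with scaling on the single klt total space $(X,\bfa D)\to B$, and applies \cite[Corollary 1.1.5]{BCHM10} once globally (Theorem \ref{thm:morphismextends}); the walls for every boundary degeneration are read off from this one decomposition. Second, even granting the construction of the families at the walls, showing that the pushforward divisors $h_*\calD^{(i)}$ form a well-defined family of divisors in Koll\'ar's sense over the non-normal base (conditions (a)--(d) in the proof of Theorem \ref{theorem:flip-like:morphisms}, in particular the descent through the normalization of an \'etale cover) is a separate argument that ``commutes with base change'' does not capture. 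The compatibility statements (a) and the cocycle identity are handled essentially as you say, and your observation that the $\bfb$-model is a further contraction of the $\bfa$-model on each one-parameter degeneration is exactly Lemma \ref{lemma:the:reduction:morphisms:contract:curves:in:D}.
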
 
	
	\begin{remark}
		We note that, to obtain the strongest results,  taking the normalization of the closure in the above theorem is necessary; see Section \ref{sec:normalization} for a discussion and example.
	\end{remark}
	
	Before stating our more general results, let us recap the history and context behind Theorem \ref{theorem:summary}. In dimension one, we have the classical moduli space $\calM_{g,n}$ of smooth projective $n$-pointed curves $(C, p_1, \ldots, p_n)$ of genus $g$ and the Deligne--Mumford--Knudsen compactification $\overline{\calM}_{g,n}$ parametrizing $n$-pointed stable curves of genus $g$. Inspired by ideas from the minimal model program, Hassett in \cite{Hassett} introduced a new family of modular compactifications of $\calM_{g,n}$ depending on a rational weight vector $\bfa = (a_1, \ldots, a_n)$ with $0 < a_i \le 1$ which parametrizes $\bfa$-\emph{weighted stable curves}. 
	
	An $\bfa$-weighted pointed stable curve is a tuple $(C, p_1, \ldots, p_n)$ such that:
	\begin{itemize}
		\item $C$ has genus $g$ and at worst nodal singularities;
		\item the points $p_i$ lie in the smooth locus of $C$ and for any subset $p_{i_1}, \ldots, p_{i_r}$ of points which coincide, we have
		$
		\sum_k a_{i_k} \le 1;
		$
		\item the divisor $K_C + \sum a_i p_i$ is ample.
	\end{itemize}
	When $a_i = 1$ for all $i$, the second condition is the requirement that the $p_i$ are distinct and the third condition is the Deligne--Mumford--Knudsen stability condition, and so we recover $\overline{\calM}_{g,n}$. 
	
	Weighted stable curves form a proper moduli space $\overline{\calM}_{g, \mathbf{a}}$ for $0 < a_i \le 1$ satisfying the condition that $ 2g -2+ \sum_{i=1}^n a_i>0. $
	These conditions define a finite, rational polytope of admissible weight vectors $P$ as in Theorem \ref{theorem:summary}, where the family $(X, D_1, \ldots, D_n) \to B$ is the universal family of smooth $n$-pointed curves of $\calM_{g,n}$. In particular, Hassett  \cite{Hassett} proved Theorem \ref{theorem:summary} in this setting. In fact, in this case $h$ is a birational \emph{morphism} produced as an explicit sequence of contractions of rational tails on which the degree of $K_C + \sum b_i p_i$ is non-positive, that is, $\bfb$-unstable rational tails.

	The natural generalization of a pointed stable curve to higher dimensions, introduced by Koll\'ar and Shepherd-Barron \cite{KSB} and Alexeev \cite{alK^2}, is a stable pair $(X, \sum a_i D_i)$ such that
	
	\begin{enumerate} 
		
		\item $(X, \sum a_i D_i)$ has semi-log canonical singularities (slc, see Definition \ref{def:slc}); and
		\item $K_X + \sum a_i D_i$ is an ample $\mathbb{Q}$-Cartier divisor. 
	\end{enumerate} 
	Explicit stable pair compactifications of moduli of higher dimensional varieties have been studied extensively in recent years, e.g. weighted hyperplane arrangements \cite{hkt,Ale15}, principally polarized abelian varieties \cite{Ale02}, plane curves \cite{Hac04}, and elliptic surfaces \cite{AB17,Inc18}, etc.
	
	Thanks to the combined efforts of many authors (see e.g. \cite[30]{kollarmumford} for a historical summary), there exists a proper moduli space $\sK_{\bfa, v}$ of $\bfa$-weighted stable pairs with volume of $$\mathrm{vol}(K_X + \sum a_i D_i) = v$$
	in all dimensions. For convenience, we often suppress the volume $v$ or consider instead $\sK_{\bfa} := \bigsqcup_v \sK_{\bfa,v}$. Note that the volume will vary as a function of the weight vector $\bfa$ and also changes under wall-crossing morphisms, e.g. in the case of curves, the volume is $2g - 2 + \sum_{i = 1}^n a_i$. 
	
	The basic idea then behind Theorem \ref{theorem:summary} is to consider the universal $\bfa$-weighted stable  family $(\calX_\bfa, \sum a_i \calD_i)$ and run the minimal model program with scaling. This produces the canonical model of $(\calX_\bfa, \sum b_i \calD_i)/\sN_{\bfa}$ and the birational map $h$. We then need to check that this is indeed a stable family of $\bfb$-weighted pairs which then induces the wall-crossing morphism $\rho_{\bfb, \bfa}$. The finite wall-and-chamber decomposition is ultimately a consequence of \cite[Corollary 1.1.5]{BCHM10}.
	
	One complication of the higher dimensional case is that $h$ is in general not a morphism due to the existence of flips. A more serious challenge is that, contrary to the one dimensional case, $\sK_{\bfa, v}$ is in general very singular with many irreducible components parametrizing non-smoothable, reducible varieties \cite{Vakil, Persson}. Moreover, the MMP and even finite generation of the log canonical ring can fail in general. In order to overcome some of the many complications, we need to work with the closure of irreducible loci parametrizing normal crossings, or more generally klt pairs. Indeed, one of the key insights of this paper is that wall-crossing for moduli of stable pairs is controlled by the minimal model program with scaling on the total spaces of $1$-parameter smoothings of the slc pairs on the boundary. Finally, in order to apply the strategy described above, we need to work over some smooth base (e.g. a compactification of $B$ in Theorem \ref{theorem:summary}) and then descend to the seminormalization or normalization of the corresponding moduli space.

	\subsection{Statements of the main results} \label{sec:mainthm}
	
	We are now ready to state our main results in full generality. Fix some weight vector $\bfa = (a_1, \ldots, a_n)$ of rational numbers $a_i \in (0,1] \cap \mathbb{Q}$. Let $f : (X, \sum a_i D_i) \to B$ be a locally stable family (Definition \ref{def:locstab}). 
	
	\begin{definition}\label{def:admissible} We say that a weight vector $\bfb = (b_1, \ldots, b_n)$ is \emph{admissible} if $(X, \sum b_i D_i) \to B$ is locally stable and $K_X + \sum b_i D_i$ is $f$-big. We say that a polytope
		$
		P \subset ((0,1] \cap \mathbb{Q})^n
		$
		is admissible if every vector $\bfb \in P$ is admissible. 
	\end{definition} 
	
	\begin{notation}
		For $\bfb \le \bfa$ admissible weight vectors, we define $\bfv(t) = t\bfa + (1-t)\bfb$ for $t \in [0,1]$.
	\end{notation}
	\begin{notation} For any weight vector $\bfv = (v_1, \ldots, v_n)$ we denote by $\bfv D$ the divisor $\sum v_i D_i$. 
	\end{notation}
	
	Let $\sK^\circ \subset \sK_{\bfa}$ be a quasicompact locally closed substack of the space of $\bfa$-weighted stable pairs, and  suppose that $\sK^\circ$ parametrizes klt pairs. Let $f^\circ : (\calX^\circ, \bfa \calD^\circ) \to \sK^\circ$ denote the universal family of klt stable pairs over $\sK^\circ$. Fix an admissible weight vector $\bfb \le \bfa$ for $f^\circ$. For each $t \in [0,1]$ we have a set theoretic map
	$
	\phi_t : \sK^\circ(k) \to \sK_{\bfv(t)}(k)
	$
	which takes a point $x : \Spec k \to \sK^\circ$ classifying the klt stable pair $(X, \bfa D)$ to the point $x_{\bfv(t)} : \Spec( k) \to \sK_{\bfv(t)}(k)$ classifying the canonical model of $(X, \bfv(t) D)$. 
	
	\begin{definition} For each $t \in [0,1]$ we let $\sM_{t}$ denote the seminormalization of the closure of the image of $\phi_t$ and we let $\sN_t$ denote the normalization of $\sM_t$. We let $\sM_{\bfa}$ (resp. $\sM_{\bfb}$) denote $\sM_0$ (resp. $\sM_1$) and similarly for $\sN$. 
	\end{definition}
	
	\begin{remark} Note that $\sM_t$ and $\sN_t$ are proper Deligne--Mumford stacks with families of $\bfv(t)$-weighted stable pairs pulled back from the universal family of $\sK_{\bfv(t)}$. Moreover, since seminormalization is functorial, the family over $\sM_t$, which we denote $(\calX_{t}, \bfv(t) \calD_t) \to \sM_t$, is the universal family for the functor of stable families $g : (Z, \bfv(t) \Delta) \to B$ over seminormal base schemes $B$ such that for each $b \in B$, the fiber $g_b$ is the limit of a family of canonical models of the pairs parametrized by $f^\circ$. 
	\end{remark}
	
	\begin{theorem}[Theorem \ref{thm:morphismextends}, Corollary \ref{cor:finiteness} and Theorem \ref{theorem:flip-like:morphisms}]\label{thm:mainintro} There exist finitely many rational numbers $t_i \in [0,1] \cap \mathbb{Q}$ with $0 < t_1 < \ldots < t_m < 1$ such that the following hold. 
		
		\begin{enumerate} 
			\item For each $t_i < s < s' < t_{i + 1}$, $\sM_{s} \cong \sM_{s'}$ and the universal families $(\calX_{s}, \bfv(s) \calD_{s})$ and $(\calX_{s'}, \bfv(s') \calD_{s'})$ have isomorphic underlying marked families so that $$(\calX_{s'}, \bfv(s') \calD_{s'}) \cong (\calX_s, \bfv(s') \calD_s).$$ Moreover, these isomorphisms fit in a commutative diagram below.
			\begin{equation*}
				\xymatrix{
					\calX_s \ar[r]^{\cong} \ar[d] & \calX_{s'} \ar[d] \\ \sM_s \ar[r]^{\cong} & \sM_{s'} 
				}
			\end{equation*}
			
			\item For each consecutive pair $t_i < t_{i + 1}$, and any $t_i < s < t_{i + 1}$ there is a commutative diagram 
			$$
			\xymatrix{
				\calX_{t_i} \ar[d] & \ar[l] \ar[r] \ar[d] \calX_{s} & \calX_{t_{i + 1}} \ar[d] \\
				\sM_{t_i} & \sM_{s} \ar[r]^{\beta_{t_{i + 1}}} \ar[l]_{\alpha_{t_i}} & \sM_{t_{i+1}}.
			}
			$$
			where the morphism $\calX_s \to \sM_s$ in the middle is independent of $s$ by part (1). 
			
			\item There is a dense open substack $\calU \subset \sM_s$ parametrizing klt pairs such that for each $u \in \calU$ classifying the klt stable pair $(\calX_u, \bfv(s)\calD_u)$, $\alpha_{t_i}(u)$ classifies the canonical model of $(\calX_u, \bfv(t_i)\calD_u)$ and $\beta_{t_i + 1}(u)$ classifies the canonical model of $(\calX_u, \bfv(t_{t_i + 1})\calD_u)$.
		\end{enumerate}
	\end{theorem}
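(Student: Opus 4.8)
The plan is to pull the entire question back to a relative minimal model program (MMP) with scaling over a \emph{smooth} base and then descend the output to the seminormal moduli spaces $\sM_t$. First I would produce a smooth Deligne--Mumford stack $T$ together with a proper surjection $T \to \sM_s$ (for a fixed chamber value $s$) and a locally stable family $(\calX_T, \bfa\calD_T) \to T$ whose restriction over a dense open set is the pullback of the universal klt family of $\sK^\circ$ and whose total space pair $(\calX_T, \bfa\calD_T)$ is klt. Such a $T$ is obtained by resolving an atlas of the closure and, at the boundary, working with the total spaces of one-parameter smoothings, using properness of $\sK_\bfa$ to produce the required limits; this is the concrete meaning of controlling wall-crossing by the MMP on smoothings, and it is compatible with a klt total space having slc special fibers. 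Since the whole segment $\bfv(t) = t\bfa + (1-t)\bfb$ lies in an admissible, hence bounded and rational, polytope for $(\calX_T, \calD_T)/T$, the finiteness of relative log canonical models as the coefficient vector varies in a polytope, \cite[Corollary 1.1.5]{BCHM10}, shows that there are only finitely many distinct relative canonical models and that their birational type is constant on each subinterval. This yields the walls $0 < t_1 < \cdots < t_m < 1$, and running the MMP with scaling of an ample divisor realizes the transition across each $t_i$ as an explicit sequence of divisorial contractions and flips.

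For Part (1), if $s, s'$ lie in the same chamber then the relative canonical models of $(\calX_T, \bfv(s)\calD_T)$ and $(\calX_T, \bfv(s')\calD_T)$ share the same underlying variety over $T$, so $(\calX_s, \bfv(s')\calD_s)$ is already the canonical model for weight $\bfv(s')$; descending to the seminormal base gives the marked isomorphism of families and, via the universal property of the moduli functor together with functoriality of seminormalization, the isomorphism $\sM_s \cong \sM_{s'}$. For Part (2), the relative canonical model of $(\calX_T, \bfv(t_i)\calD_T)/T$ is a stable family of $\bfv(t_i)$-weighted pairs, hence is classified by a morphism $T \to \sK_{\bfv(t_i)}$. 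I would show this morphism is constant on the geometric fibers of $T \to \sM_s$ and lands in the closure of the image of $\phi_{t_i}$, and then descend it --- using seminormality of $\sM_s$ and separatedness of the target --- to the wall-crossing morphism $\alpha_{t_i} \colon \sM_s \to \sM_{t_i}$. The morphism $\beta_{t_{i+1}}$ is built identically, and the commuting squares of universal families are the descents of the contraction diagrams over $T$.

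For Part (3), over the klt locus $\calU \subset \sM_s$ the formation of the relative canonical model commutes with restriction to a fiber, because for klt pairs the log canonical ring is formed compatibly with base change to a point. Hence for $u \in \calU$ the points $\alpha_{t_i}(u)$ and $\beta_{t_{i+1}}(u)$ classify precisely the fiberwise canonical models $(\calX_u, \bfv(t_i)\calD_u)$ and $(\calX_u, \bfv(t_{i+1})\calD_u)$, which is exactly where the klt hypothesis on $\sK^\circ$ is indispensable.

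The step I expect to be the main obstacle is precisely the behavior at the boundary $\sM_s \setminus \calU$, where the fibers are only slc and the formation of the relative canonical model need \emph{not} commute with base change. One cannot argue fiberwise there; instead the argument must proceed through a one-parameter smoothing of each boundary slc pair, run the relative MMP with scaling on the total space of the smoothing, and verify that the special fiber of the resulting canonical model is the correct stable limit --- equivalently, the canonical model of the special fiber. Establishing this compatibility, via inversion of adjunction and the analysis of log canonical models in one-parameter degenerations, and thereby checking that the descended morphisms $\alpha_{t_i}, \beta_{t_{i+1}}$ are genuinely defined on all of $\sM_s$ rather than only on $\calU$, is the technical heart of the proof.
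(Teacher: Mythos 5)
Your overall architecture matches the paper's: resolve to a smooth base carrying a klt locally stable family, run the relative MMP with scaling, invoke \cite[Corollary 1.1.5]{BCHM10} for the finiteness of walls, and descend to the seminormal moduli spaces (this is Theorem \ref{thm:morphismextends} plus Corollary \ref{cor:finiteness} plus Theorem \ref{theorem:flip-like:morphisms}). You also correctly locate the difficulty at the strictly slc boundary. However, your descent mechanism has a genuine gap. You propose to descend the classifying morphism $T \to \sK_{\bfv(t_i)}$ by showing it is constant on the geometric fibers of $T \to \sM_s$; but at a boundary point whose fiber is strictly slc, the only access you have to the $\bfv(t_i)$-stable model is through the total space of a one-parameter smoothing, so ``constant on fibers'' is exactly the assertion that this output depends only on the slc pair $(\calX_p, \bfv(s)\calD_p)$ and not on the chosen smoothing or preimage in $T$ --- which is the statement you were trying to prove. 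The paper breaks this circularity by constructing the family directly over $\sM_s$ as $\calY = \Proj_{\sM}\bigl(\bigoplus_m \pi_*\calO_{\calX}(md(K_{\calX/\sM}+\bfv(t)\calD))\bigr)$ and proving that this relative $\Proj$ commutes with arbitrary base change via relative Kawamata--Viehweg vanishing ($H^1(\calX_p, md(K_{\calX_p}+\bfv(t)\calD_p))=0$, using that every fiber is a one-parameter degeneration of a klt pair and that $K_{\calX_p}+\bfv(t)\calD_p$ is nef and big). That vanishing is the missing input; your appeal to seminormality and separatedness of the target cannot substitute for it. Also note that your formulation ``the special fiber of the canonical model is the canonical model of the special fiber'' is not what is needed at slc points --- the fiber of $\calY$ is the \emph{stable model} ($\Proj$ of the log canonical ring) of the slc fiber, and Part (3) of the theorem only claims the fiberwise canonical-model description over the klt locus.

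The second omission is the divisor bookkeeping, which occupies most of the paper's proof of Theorem \ref{theorem:flip-like:morphisms}. The universal object is a family marked with $n$ divisors in Koll\'ar's sense, so after contracting you must verify that the pushforwards $h_*\calD^{(i)}$ are again well-defined families of divisors over the seminormal, generally non-normal, base: equidimensionality of $\Supp(h_*\calD^{(i)}) \to \sM$, generic smoothness of the fibers along these supports, the fiberwise identity $\Supp(f_*D^{(i)})_u = \Supp((f_u)_*D^{(i)}_u)$ (including the degenerate case where a component of $D^{(i)}$ is contracted to lower dimension, so $f_*D^{(i)}=0$), and the gluing condition of \cite[Corollary 4.5]{kollarmodulibook} identifying the data over points of $U$ with the same image in the normalization. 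Without this, you do not have a stable family over $\sM_s$ and hence no morphism $\alpha_{t_i}$ or $\beta_{t_{i+1}}$ at all, since $\sK_{\bfv(t_i)}$ only represents the functor of such marked stable families over seminormal bases. Finally, for Part (1) the isomorphism $\sM_s \cong \sM_{s'}$ likewise requires checking that $(\calX_s, \bfv(s')\calD_s) \to \sM_s$ is locally stable, which the paper does by reducing to DVRs and lifting them to $B$ via the valuative criterion of properness; ``functoriality of seminormalization'' alone does not produce this.
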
 
	
	In particular, Theorem \ref{thm:mainintro} shows that there are finitely many walls $t_i$ and finitely many moduli spaces parametrizing canonical models of the fibers of $f^\circ$ as we reduce weights from $\bfa$ to $\bfb$ along the line $\bfv(t)$. Moreover, around each wall, the moduli spaces are related via the morphisms $\alpha_{t_i}$ and $\beta_{t_i}$ which we call \emph{flip-like morphisms} as they are induced by flips in the mmp with scaling as one reduces weights from $t_i + \varepsilon$ to $t_i - \varepsilon$. This is a higher dimensional phenomenon not witnessed in the case of curves. 
	
	In order to obtain reduction morphisms as in \cite{Hassett} and in Theorem \ref{theorem:summary}, we need to invert $\beta_{t_i}$. In general, this is only possible up to normalization (see Section \ref{sec:normalization} for an example). 
	
	\begin{theorem}[Theorem \ref{thm:finite} and Theorem \ref{thm:reduction:morphism:birational:for:ZMT}]\label{thm:normintro} The morphism $\beta_{t_i} : \sM_{t_i - \varepsilon} \to \sM_{t_i}$ is quasi-finite, proper, birational and representable. In particular, the induced morphism on normalizations $\beta_{t_i}^\nu : \sN_{t_i - \varepsilon} \to \sN_{t_i}$ is an isomorphism. 
	\end{theorem}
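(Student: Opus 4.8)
The plan is to verify separately that $\beta_{t_i}$ is proper, representable, birational, and quasi-finite, and then to combine these into finiteness, from which the isomorphism on normalizations follows. Throughout write $s = t_i - \varepsilon$ for a weight in the chamber $(t_{i-1}, t_i)$ immediately below the wall, so that $\beta_{t_i} \colon \sM_s \to \sM_{t_i}$, and recall from Theorem~\ref{thm:mainintro}(3) that over the dense klt locus $\calU \subset \sM_s$ the morphism sends a point classifying $(\calX_u, \bfv(s)\calD_u)$ to the point classifying the canonical model of $(\calX_u, \bfv(t_i)\calD_u)$. Properness is immediate: $\sM_s$ and $\sM_{t_i}$ are proper Deligne--Mumford stacks, so $\sM_{t_i}$ is in particular separated, and a morphism from a proper stack to a separated one is proper. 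For representability it suffices to check injectivity on automorphism groups at geometric points. An automorphism of $(Z, \bfv(s) D_Z)$ preserves the reduced divisor $D_Z$, hence every weighting of it, and so by functoriality of the canonical model it induces an automorphism of $(Y, \bfv(t_i) D_Y)$, where $Y$ is the canonical model of $(Z, \bfv(t_i) D_Z)$. This assignment is injective: the canonical model comes with a birational map $Z \dashrightarrow Y$ that is an isomorphism over a dense open, so an automorphism of $Z$ acting trivially on $Y$ acts trivially on a dense open and therefore on all of $Z$. Hence $\beta_{t_i}$ is representable.

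For birationality I would exhibit a dense open substack over which $\beta_{t_i}$ is an isomorphism. Since the chamber $(t_{i-1},t_i)$ is maximal, the canonical model of the generic member is constant for $s$ in this chamber, and taking the limit as $s \to t_i^-$ shows that for the generic point of each component of $\sM_s$ the divisor $K_Z + \bfv(t_i)D_Z$ is still ample; ampleness being open, this holds on a dense open substack. Over this locus the canonical model of $(Z,\bfv(t_i)D_Z)$ is $Z$ itself, so $\beta_{t_i}$ restricts to the identity and is therefore birational.

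The crux is quasi-finiteness. The geometric fibre of $\beta_{t_i}$ over a point classifying $(Y, \bfv(t_i)D_Y)$ is the set of isomorphism classes of stable pairs $(Z, \bfv(s)D_Z)$ in $\sM_s$ whose canonical model at weight $\bfv(t_i)$ equals $Y$. Each such $Z$ is birational to $Y$ with $K_Z + \bfv(s)D_Z$ ample, and the birational map $Z \dashrightarrow Y$ is an isomorphism in codimension one because in the direction of increasing weight no divisorial contractions occur; thus $Z$ is a marked minimal model of $(Y,\bfv(s)D_Y)$. By the finiteness of minimal models \cite[Corollary 1.1.5]{BCHM10} there are only finitely many such $Z$ up to isomorphism, so every fibre of $\beta_{t_i}$ is finite and $\beta_{t_i}$ is quasi-finite. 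The main obstacle is precisely the assertion that along the increasing-weight direction the modifications are small: this is where the minimal model program with scaling of the preceding sections is invoked to rule out divisorial contractions, in contrast with the decreasing-weight morphisms $\alpha_{t_i}$, which may contract divisors and need not be quasi-finite.

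Finally, a representable morphism that is proper and quasi-finite is finite, so $\beta_{t_i}$ is a finite birational morphism. Passing to normalizations, the composite $\sN_s \to \sM_s \xrightarrow{\beta_{t_i}} \sM_{t_i}$ is finite and birational with normal source, so by the universal property of normalization it factors through a finite birational morphism $\beta_{t_i}^\nu \colon \sN_s \to \sN_{t_i}$ of normal Deligne--Mumford stacks. Since $\beta_{t_i}$ is representable, this reduces via an atlas to the scheme-theoretic statement that a finite birational morphism onto a normal target is an isomorphism (Zariski's main theorem), whence $\beta_{t_i}^\nu$ is an isomorphism.
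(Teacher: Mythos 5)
Your treatment of properness and representability matches the paper's, but the two load-bearing steps --- quasi-finiteness and birationality --- each contain a genuine gap. The central one is your claim that, in the increasing-weight direction, the canonical model map $h\colon Z \to Y$ from a pair in $\sM_{t_i-\varepsilon}$ to its image in $\sM_{t_i}$ is an isomorphism in codimension one. This is false: raising the coefficients of $D$ \emph{decreases} $(K_Z+\bfv(t)D)\cdot C$ for curves $C$ with $D\cdot C<0$, i.e.\ for curves inside $\Supp(D)$, so at the wall $h$ may well contract divisors contained in $\Supp(\bfv(t_i)D)$. The paper's Lemma \ref{lemma:the:reduction:morphisms:contract:curves:in:D} only gives $\operatorname{Exc}(h)\subseteq\Supp(\bfv(t_i)D)$ (no component of $Z$ is contracted), not smallness, and the entire content of Proposition \ref{prop:lc:case:finite} is to control the divisors \emph{extracted} by $h$: they have strictly negative discrepancy, there are only countably many such divisors over a fixed pair (Lemma \ref{lemma:negative:discrepancies:logsmooth:pair}, via blow-ups of strata of a log resolution), and Matsusaka--Mumford then pins down $Z$ from this data together with the coefficients. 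Your appeal to finiteness of minimal models in \cite{BCHM10} is also not directly available: the fibres of $\beta_{t_i}$ consist of slc, possibly non-normal, non-$\mathbb{Q}$-factorial, lc rather than klt pairs, which is why the paper first reduces to the normal lc case via Koll\'ar's gluing theory (Lemma \ref{Lemma:red:lc:case}) and only obtains \emph{countable} fibres, upgrading to finite using that $\beta_{t_i}$ is of finite type.

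The birationality step has a parallel problem: you assert that $K_Z+\bfv(t_i)D_Z$ remains ample at the generic point of $\sM_{t_i-\varepsilon}$, so that $\beta_{t_i}$ is generically the identity. Nothing forces this; the wall may be caused by the generic fibre itself (for instance a small contraction when the fibres have dimension at least three), in which case the generic object genuinely changes under $\beta_{t_i}$. The paper instead proves generic injectivity by \emph{inverting} $\beta_{t_i}$ generically: by Proposition \ref{prop:MMP:with:scaling:Jakub} the source pair is recovered as the canonical model of the image pair at weight $\bfv(t_i-\varepsilon)$, and a bijection on geometric points is then upgraded to an isomorphism of stacks over a dense open via the stabilizer-preserving criterion, which requires the additional surjectivity of automorphism groups proved in Theorem \ref{thm:reduction:morphism:birational:for:ZMT} using the small morphism supplied by Proposition \ref{prop:MMP:with:scaling:Jakub}. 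Your final deduction (finite plus birational onto a normal target implies isomorphism) is fine once these two inputs are repaired.
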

	
	Theorem \ref{thm:normintro} allows us to define reduction morphisms $\rho_{\bfb, \bfa} : \sN_{\bfa} \to \sN_{\bfb}$ by composing the induced maps $\alpha_{t_i}^\nu$ on normalizations with the inverses of $\beta_{t_i}^\nu$ for all $(\bfa \to \bfb)$-walls (see Definition \ref{def:reduction}). Under the assumption that the generic fiber of $f^\circ$ is $\bfv(t)$-weighted stable for all $t \in [0,1]$, which is the case for example in dimension $1$ as well as in the setting of Theorem \ref{theorem:summary}, we have the following. 
	
	\begin{theorem}[Theorem \ref{thm:last:thm} and Corollary \ref{cor:reductionbirational}]\label{thm:introbir} Let $P$ be an admissible polytope of weight vectors such  that the generic fiber of the universal family $(\calX_\bfa, \bfa \calD) \to \sM_\bfa$ is $\bfv$-weighted stable for all $\bfv \in P$. Then, for all $\bfb \le \bfa$ in $P$, the reduction morphisms $\rho_{\bfb, \bfa}: \sN_{\bfa} \to \sN_{\bfb}$ are birational and independent of the choice of path from $\bfa$ to $\bfb$. In particular, $$
		\rho_{\bfc, \bfb} \circ \rho_{\bfb, \bfa} = \rho_{\bfc, \bfa}. 
		$$
	\end{theorem}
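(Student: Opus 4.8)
The plan is to reduce both assertions to an intrinsic, path-independent description of the reduction morphism on a dense locus, and then to rigidify using separatedness of the target. Throughout, write $\calU \subset \sN_\bfa$ for the open locus parametrizing klt pairs; it is dense, since the klt condition is open in families of stable pairs and the generic fiber of the universal family over $\sN_\bfa$ is klt, being one of the klt pairs parametrized by $\sK^\circ$ whose closure defines $\sM_\bfa$.

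\emph{Step 1: birationality.} By Definition \ref{def:reduction}, $\rho_{\bfb,\bfa}$ is a finite composition of the morphisms $\alpha_{t_i}^\nu$ with the inverses of the $\beta_{t_i}^\nu$. By Theorem \ref{thm:normintro} each $\beta_{t_i}^\nu$ is an isomorphism, so it suffices to show that each $\alpha_{t_i}^\nu$ is birational. By Theorem \ref{thm:mainintro}(3), for $u \in \calU$ the point $\alpha_{t_i}(u)$ classifies the canonical model of $(\calX_u, \bfv(t_i)\calD_u)$. By hypothesis the generic fiber of the universal family is $\bfv(t_i)$-weighted stable, and a stable pair is its own canonical model; hence $\alpha_{t_i}$ is an isomorphism over the generic point and therefore birational. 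A composition of birational morphisms of the proper Deligne--Mumford stacks $\sN_t$ is birational, which gives the first claim.

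\emph{Step 2: an intrinsic description on $\calU$.} Let $\gamma$ be any admissible piecewise-linear path from $\bfa$ to $\bfb$ inside $P$, and let $\rho_\gamma \colon \sN_\bfa \to \sN_\bfb$ be the reduction morphism obtained by concatenating, along the linear segments of $\gamma$, the morphisms $\alpha_{t_i}^\nu$ and the inverses $(\beta_{t_i}^\nu)^{-1}$ supplied by Theorems \ref{thm:mainintro} and \ref{thm:normintro}. I claim that for $u \in \calU$ classifying the klt pair $(\calX_u, \bfa\calD_u)$, the point $\rho_\gamma(u)$ classifies the canonical model of $(\calX_u, \bfb\calD_u)$, independently of $\gamma$. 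Indeed, the classified family is constant within each chamber by Theorem \ref{thm:mainintro}(1), and by Theorem \ref{thm:mainintro}(3) together with the identification $\beta_{t_i}^\nu$ of Theorem \ref{thm:normintro}, the net effect of crossing a single wall $t_i$ on $\calU$ is to replace the classified pair by its canonical model for the lower weight. The composition property of canonical models for a decreasing family of weights---namely that the canonical model for a weight $\bfv'$ of the canonical model of $(\calX_u, \bfv\calD_u)$ taken for a weight $\bfv \ge \bfv'$ coincides with the canonical model of $(\calX_u, \bfv'\calD_u)$---then shows that the cumulative effect along $\gamma$ is passage to the $\bfb$-canonical model, regardless of where the intermediate walls of $\gamma$ lie.

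\emph{Step 3: rigidification, path-independence and the cocycle relation.} The source $\sN_\bfa$ is normal, hence reduced; the target $\sN_\bfb$ is a separated Deligne--Mumford stack; and $\calU$ is dense, hence schematically dense, in $\sN_\bfa$. Therefore any two morphisms $\sN_\bfa \to \sN_\bfb$ that agree on $\calU$ coincide. By Step 2 every $\rho_\gamma$ agrees with $\rho_{\bfb,\bfa}$ on $\calU$, so $\rho_{\bfb,\bfa}$ is independent of the chosen path. For $\bfc \le \bfb \le \bfa$ in $P$, the composite $\rho_{\bfc,\bfb}\circ\rho_{\bfb,\bfa}$ is by construction the reduction morphism along the concatenated path $\bfa \to \bfb \to \bfc$, so path-independence yields $\rho_{\bfc,\bfb}\circ\rho_{\bfb,\bfa} = \rho_{\bfc,\bfa}$.

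\emph{Main obstacle.} The essential difficulty is Step 2: two piecewise-linear paths need not share any walls, so a segment-by-segment comparison is unavailable. The way around this is to discard the combinatorics and identify every path with the single intrinsic operation of passing to the $\bfb$-canonical model, which forces agreement on $\calU$. The technical heart is thus the composition property of relative canonical models in the klt setting---in particular that the loci contracted at a wall $t_i$, being $\bfv(t_i)$-unstable, remain unstable for all smaller weights and so do not interfere with the $\bfb$-canonical model---after which separatedness supplies the remaining rigidity.
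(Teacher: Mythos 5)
Your overall architecture matches the paper's: birationality comes from the flip-like morphisms being birational (the $\beta_{t_i}^\nu$ are isomorphisms and the $\alpha_{t_i}^\nu$ are birational under the stability hypothesis, as in Proposition \ref{prop_answer_Zsolt_question}), and path-independence is obtained by identifying all the $\rho_\gamma$ on a dense open substack and then rigidifying by separatedness, which is exactly how Theorem \ref{thm:last:thm} is proved (via \cite[Lemma 7.2]{DH18}). Step 1 is a little quick at the level of stacks --- knowing that $\alpha_{t_i}$ is a bijection on geometric points of a dense open does not by itself give an isomorphism of substacks; one must also check the map is stabilizer-preserving, which is the content of the proof of Proposition \ref{prop_answer_Zsolt_question} --- but the idea is the right one.

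The genuine gap is in Step 2. The ``composition property of relative canonical models'' you invoke --- that the $\bfv'$-canonical model of the $\bfv$-canonical model of $(\calX_u,\bfv\calD_u)$ equals the $\bfv'$-canonical model of $(\calX_u,\bfv'\calD_u)$ for $\bfv' \le \bfv$ --- is \emph{false} for klt pairs, and Section \ref{section:line} of the paper is precisely a counterexample: for the Weierstrass elliptic surface $(X, dS + \tfrac{d}{2}F)$, reducing the fiber weight first contracts the section $S$ (yielding a pseudoelliptic $Z$), whereas $(X,\varepsilon S + \tfrac15 F)$ is itself stable, so its canonical model is $X \ne Z$. Your supporting claim that ``the loci contracted at a wall $t_i$, being $\bfv(t_i)$-unstable, remain unstable for all smaller weights'' fails in the same example: $(K_X + dS + (\tfrac25-\varepsilon)F)\cdot S < 0$ but $(K_X + \varepsilon S + \tfrac15 F)\cdot S > 0$, so the contracted section becomes stable again once the weight on $S$ is also lowered. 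Consequently your assertion that $\rho_\gamma(u)$ classifies the $\bfb$-canonical model of $(\calX_u,\bfb\calD_u)$ for \emph{every} klt point $u\in\calU$, independently of $\gamma$, is false --- the elliptic surface lives in the klt locus. The repair is to use the theorem's hypothesis where it is actually needed: restrict to the dense open locus where the fiber is $\bfv$-weighted stable for \emph{all} $\bfv\in P$ (nonempty because it contains the generic point by hypothesis, and open since relative ampleness of $K+\bfv\calD$ is open and can be checked at the vertices of $P$). On that locus every flip-like morphism fixes the underlying marked pair and only rescales coefficients, so all paths trivially agree there; separatedness then finishes the argument exactly as in your Step 3.
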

	
	In Section \ref{section:countereg}, we give several examples illustrating that Theorem \ref{thm:introbir} is subtle without the extra assumption on the generic fiber of the universal family.

	\subsection{Relations to other work}   The behavior of stable pairs moduli under changing the coefficients has been studied in a few previous cases. In \cite{Ale15}, Alexeev constructed compact moduli spaces of \emph{weighted stable hyperplane arrangements}. These are moduli spaces parametrizing pairs $(X, \sum a_i H_i)$, where $X$ is a
	degeneration of $\bP^n$ and the $H_i$ are the limits of hyperplanes. Among other things, Alexeev shows that there are wall-crossing morphisms as one varies the weights on the $H_i$ as in Theorem \ref{theorem:summary}. This provides alternate compactifications of the spaces of Hacking--Keel--Tevelev \cite{hkt}. Similarly, in \cite{AB17} compact moduli spaces of \emph{weighted stable elliptic surfaces} are constructed (see also \cite{Inc18}). These moduli spaces parametrize pairs of an elliptic surface with the divisor consisting of a section and some weighted (possibly singular) fibers. It is proven that these moduli spaces also satisfy the above wall-crossing morphisms as the weight vector varies. A similar phenomenon has also been recently studied from the viewpoint of K-moduli \cite{ADL}.
	Wall-crossing morphisms play an important role in the study of explicit moduli compactifications, their birational geometry, and for the sake of computations on compact moduli spaces (see e.g. \cite{k3,dp}, the related Hassett--Keel program \cite{fedorchukalternate}, variation of GIT \cite{dhu, thaddeus}, and the Hassett--Keel--Looijenga program \cite{LO, LO2, LO3, ADL2}).

	\subsection*{Conventions}We work over an algebraically closed field $k$ of characteristic 0. All schemes are finite type over $k$, unless otherwise stated. A point will be a closed point, unless otherwise stated.
	Given a morphism $f:\calX \to \calY$ between two separated Deligne--Mumford stacks, the closure of the image of $f$ will be defined as follows. If $X$ (resp. $Y$) is the coarse space of $\calX$ (resp. $\calY$) and $g$ is the morphism $X \to Y$ induced by $f$, then the closure of the image of $f$ will be $\overline{g(X)} \times_Y \calY$. Unless otherwise specified, when we talk about a pair $(X,D)$ we assume that $D>0$ and that $D$ has rational coefficients.
	For $\mathbf{a} = (a_1, \ldots, a_n) \in \bQ^n_{> 0}$ and divisors $D_1, \ldots, D_n$, we will adopt the notation
	$$
	\mathbf{a}D := \sum a_iD_i.
	$$
	If $D$ is a Weil divisor such that each irreducible component of $\Supp(D)$ intersects the smooth locus of $X$, we will make no distinction between $D$ and its associated divisorial subsheaf. 
	
	\begin{remark}
		The theory of $\mathbb{R}$-divisors is not as well-developed from the point of view of moduli theory as compared to the theory of $\mathbb{Q}$-divisors. For example, at the time of writing this article, \cite[Section 11.4]{kollarmodulibook} was not yet available. So for technical reasons, we restrict to $\mathbb{Q}$-divisors. Nevertheless, since by \cite[Corollary 1.1.5]{BCHM10}, the ``walls'' are always rational numbers, we do expect that one can apply the theory in this paper to the setting of $\mathbb{R}$-divisors.
	\end{remark}
	
	\subsection*{Acknowledgements.} We thank Dan Abramovich, Kristin DeVleming, Brendan Hassett, Stefan Kebekus, S\'andor Kov\'acs, Yuchen Liu, Martin Olsson, Roberto Svaldi, Jakub Witaszek, and Chenyang Xu for helpful discussions. We also thank the anonymous referee for their very helpful comments, references, and simplifications of proofs that helped to improve this paper.

	We are especially grateful to J\'anos Koll\'ar for many helpful comments and discussions which greatly helped to improve this paper, particularly for helping simplify the proofs of Proposition \ref{prop:MMP:with:scaling:Jakub} and Theorem \ref{thm:finite}, for sharing Example \ref{ex:JK}, as well as many other clarifications.

	Parts of this paper were completed while authors were in residence at MSRI in Spring
	2019 (NSF No.
	DMS-1440140). K. A. and D. B. were supported in part by NSF Postdoctoral Fellowships. K. A. was partially supported by NSF grant DMS-2140781 (formerly DMS-2001408). G. I. was partially supported by funds from NSF grant DMS-1759514. Zs. P. was partially supported by the following grants: grant \#200021/169639 from the Swiss National Science Foundation,  ERC Starting grant \#804334.

	\section{The moduli space of stable log pairs} \label{section:moduli:of:stable:pairs}
	
	In this section, we recall the definitions and basic setup of the moduli of stable log pairs (or \emph{stable pairs}). We refer the reader to \cite{kollarmodulibook, Kollarsingmmp} for more details on this formalism, and to \cite[Section 2.3]{KM98} for the singularities of the MMP. We begin by recalling the particular kind of singularities appearing on stable log pairs (see \cite[Chapter 5]{Kollarsingmmp}). 
	
	\begin{definition}\label{def:deminormal}A scheme $X$ is \emph{deminormal} if it is $S_2$, and the singularities in codimension one are at worse nodal singularities.
	\end{definition}
	
	Let $\nu : X^{\nu} \to X$ be the normalization of a deminormal scheme. The conductor ideal
	$$
	\mathrm{Ann}(\nu_*\calO_{X^\nu}/\calO_X) \subset \calO_X
	$$
	defines reduced, pure codimension $1$, closed subschemes $\Gamma \subset X$ and $\bar{\Gamma} \subset X^\nu$ collectively referred to as the double locus. 
	
	\begin{definition}\label{def:slc} Let $(X,\Delta)$ be a pair consisting of a deminormal variety $X$ and an effective Weil $\mathbb{Q}$-divisor $\Delta$ whose support does not contain any irreducible component of the double locus. We say $(X,\Delta)$ has \emph{semi-log canonical singularities} (abbreviated slc) if
		\begin{itemize}
			\item $K_X + \Delta$ is $\mathbb{Q}$-Cartier, and
			\item $(X^\nu, \nu^*\Delta + \bar{\Gamma})$ is log canonical. 
		\end{itemize}
		
	\end{definition}
	
	\begin{definition} A \emph{stable log variety} or \emph{stable pair} is a pair $(X, \Delta)$ such that $(X,\Delta)$ has semi-log canonical singularities and $K_X + \Delta$ is ample. 
	\end{definition}
	
	\begin{definition}\label{def:stable:model}
		Given an slc pair $(X,\Delta)$ with $K_X+\Delta$ big and semiample, consider the scheme 
		$$
		Y:=\Proj(\bigoplus_{r \ge 0} H^0(\calO_X(r(K_X+\Delta)))) = \Proj R(X, K_X + \Delta).
		$$ There is a morphism $f:X \to Y$, and we refer to the pair $(Y,f_*\Delta)$ as the \emph{stable model} of $(X,\Delta)$ if $Y$ is $S_2$. When $(X,\Delta)$ has klt singularities, this is the \emph{canonical model} of the pair. 
	\end{definition}
	
	\begin{remark} In our setting, $(X,\Delta)$ is always the central fiber of a degeneration of klt pairs, so the total space of the degeneration is klt. Kawamata--Viehweg vanishing (see Lemma \ref{lemma:bpf:thm}) guarantees that $Y$ is the central fiber of the ample model of the total space, and $Y$ is $S_2$ by a result of Alexeev (see \cite[Theorem 0.1]{AlexeevLimit}). \end{remark}
	
	\subsection{Families of stable pairs}
	Defining families of stable pairs, and especially defining how the divisor $\Delta$ varies, is quite technical. If the base scheme is smooth, then many of the subtleties disappear, and one can give a simple definition of a family of stable pairs (see Definition \ref{def:family:st:pair:over:smooth:base}). The goal of this subsection is to recall the results in \cite[Chapter 4]{kollarmodulibook} that extend the aforementioned definition from smooth bases to reduced bases and to give the general definition of the moduli space of stable pairs in this setting.
	
	
	
	\begin{definition}\label{def:family:st:pair:over:smooth:base}\cite[Corollary 4.55]{kollarmodulibook}
		Let $(X,\Delta)$ be a pair, and let $f:X\to B $ a flat morphism to a smooth scheme $B$. Then $(X,\Delta)\to B$ is a stable family if $(X,\Delta + f^*D)$ is slc for every snc divisor $D\subseteq B$, and $K_{X/B}+\Delta$ is $f$-ample. Note that by the previous condition with $D=0$, the divisor $K_{X/B} + \Delta$ is $\bQ$-Cartier, so this is well-defined.
	\end{definition}
	
	\begin{definition}[{\cite[Definition 4.2 and Theorem 4.3]{kollarmodulibook}}]\label{def:family:of:pairs}
		A \emph{family of pairs} $f:(X,D) \to S$ over a reduced base scheme $S$ is the data of a morphism $f:X \to S$ and an effective Weil $\mathbb{Z}$-divisor 
		$D$ of $X$. This data has to satisfy the following conditions:
		\begin{itemize}
			\item $f:X \to S$ is flat with reduced, connected and $S_2$ fibers of pure dimension $d$;
			\item The nonempty fibers of $\Supp(D) \to S$ are pure dimensional of dimension $d-1$ and every component of $\Supp(D)$ dominates an irreducible component of $S$;
			\item $f$ is smooth at the generic points of $X_s \cap \Supp(D)$, and
			\item For every $s \in S$ we have that $D$ is Cartier in $X$ and flat over $S$ locally around each generic point of $\Supp(D) \cap X_s$ 
		\end{itemize}
	\end{definition}
	\begin{remark}\label{remark:the:terrible:condition:on:family:of:div:is:automatic:over:normal:bases}
		The last point above is automatic when $S$ is normal, given the first three (see \cite[Theorem 4.4]{kollarmodulibook}). Moreover, by our assumptions, we need not distinguish between a Weil divisor and its associated divisorial subscheme (see \cite[Section 4.3]{kollarmodulibook}). More precisely, the closed subscheme associated to $D$ will be the closure of the closed subscheme given by the equation defining $D$ locally around each generic point of $\Supp(D) \cap X_s$.
	\end{remark}
	
	\begin{remark}
		Observe that $D$ in Definition \ref{def:family:of:pairs} is a relative Mumford divisor in the sense of \cite[Definition 4.68]{kollarmodulibook}.
	\end{remark}

	In our case, since there is a relatively big open set $U \subseteq X$ such that $\calO_U(-D|_U) \subseteq \calO_U$ is a relative line bundle, after each base-change $S' \to S$ the pull-back is still a line bundle on the pull-back $U_{S'}$.
	This gives a pull back operation on $U$, and we can extend divisorially to get the pulled back family of divisors on $X_{S'}$.
	This gives a way to pull back a family of $\mathbb{Z}$-divisors, and
	in the case where we instead have a $\mathbb{Q}$-divisor, we can choose an $m$ divisible enough so that $mD$ is a $\mathbb{Z}$-divisor, pull it back as before, and divide the resulting divisor by $m$. This is known as the pull-back with the common denominator definition.
	\begin{notation}
		Given a morphism $g:S' \to S$ and a projective family of pairs $f:(X,D) \to S$, we will denote with $(X_{S'},D_{S'}) \to S'$ the pull-back, defined as above, of $f$ along $g$.
	\end{notation}
	Finally recall that if $f:(X,D) \to S$ and $S' \to S$ are as above, then \[\Supp(D_{S'})=\Supp(h^{-1}(\Supp(D))),\] where $h:X_{S'} \to X$ is the projection (see \cite[Chapter 4]{kollarmodulibook}).
	
	Since in our case it is necessary to label the various components of $D$, we recall the following.
	\begin{definition}[{\cite[Section 4]{kollarmodulibook}}]
		\label{def:stable_family}
		A \emph{family of varieties marked with $n$ divisors} or an \emph{$n$-marked family} over a reduced scheme $S$ is the data of $f:(X;D_1,...,D_n) \to S$ satisfying the following condition: for every $i$, the pair $(X,D_i) \to S$ is a family of pairs, and $X \to S$ is flat with connected and $S_2$-fibers.
	\end{definition}
	Fix $(a_i)_{i=1}^n \in (\mathbb{Q} \cap (0,1])^n$, and consider an $n$-marked family $$f:(X;D_1,...,D_n) \to S$$ such that for every $s \in S$, the pair $(X_s,a_1(D_1)_s+...+a_n(D_n)_s)$ is stable. The functor of such families is not well-behaved. Therefore,
	on needs the following notion of \emph{stable families}:
	
	\begin{definition}
		[{\cite[Definition-Theorem 4.45 and 4.70.3]{kollarmodulibook}}]\label{def:marking}
		A family of varieties marked with divisors $f:(X;D_1,...,D_n) \to B$ over a reduced scheme $B$ is \emph{stable with coefficients in} $a=(a_1,...,a_n)$ if $K_{X/B}+\sum a_iD_i $ is $\mathbb{Q}$-Cartier and the fibers $\left(X_b,\sum a_i(D_i)_b\right)$ are stable pairs. We will often write that $f:(X, \sum a_iD_i) \to B$ is a stable family, or that $f$ is stable.
	\end{definition}
	
	\begin{theorem}[{\cite[Theorems 4.1 and 4.8]{kollarmodulibook}}]\label{thm:thm4.79} Fix a positive rational number $v$, a positive integer $d$, and a vector of positive rational numbers $\mathbf{a}$.
		Then there is a proper Deligne--Mumford stack $\sK_{\mathbf{a},d,v}$ which, for $B$ seminormal, represents the moduli problem of stable families $f:(X,\sum a_i D_i) \to B$ with fibers of dimension $d$ and volume $v$.
	\end{theorem}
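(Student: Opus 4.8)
The plan is to assemble the three standard ingredients of a GIT-free moduli construction — boundedness, local closedness of the stable locus inside a parameter space, and the valuative criterion — and then to check that the resulting moduli functor is a proper Deligne--Mumford stack. Since the invariants $d$, $v$, and $\mathbf{a}$ are fixed, the first task is \emph{boundedness}: I would invoke boundedness of slc pairs with fixed dimension, volume, and coefficient vector to produce an integer $N$, depending only on $(d, v, \mathbf{a})$, such that $N(K_X + \sum a_i D_i)$ is very ample with a uniform Hilbert polynomial on every stable pair in question. This embeds all such pairs into a fixed $\bP^M$, so they are parametrized by a subscheme of a suitable flag Hilbert scheme $H$ that simultaneously records $X$ and the marked components $D_1, \dots, D_n$.

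The second step is to cut out the stable locus inside $H$. Over the universal marked family on $H$, I would show that the conditions ``the fibers are slc'' (Definition \ref{def:slc}), ``$K_{X/B} + \sum a_i D_i$ is $\mathbb{Q}$-Cartier,'' and ``$K_{X/B} + \sum a_i D_i$ is relatively ample of volume $v$'' are each locally closed on the base. This is where the care of Definition \ref{def:marking} and the theory of relative Mumford divisors of \cite[Chapter 4]{kollarmodulibook} is essential, since the divisorial part $\sum a_i D_i$ must vary in a flat, base-change-compatible way. Restricting to seminormal bases, as in the statement, makes the divisorial pullback unambiguous (common-denominator pullback) and identifies the functor of stable families with a quotient stack $[\,H^{\mathrm{stab}}/\mathrm{PGL}_{M+1}\,]$. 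Because each stable pair has ample log canonical class, its automorphism group is finite, so $\mathrm{PGL}_{M+1}$ acts with finite stabilizers and the quotient is Deligne--Mumford.

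The third and deepest step is \emph{properness}, via the valuative criterion over a DVR $R$ with fraction field $K$. Given a stable family over $\Spec K$, I would spread it out over a punctured disk, apply semistable reduction after a finite base change of $R$, and then run the minimal model program on the total space to produce the relative \emph{log canonical model}; its special fiber is the desired slc limit, and ampleness of the log canonical divisor on the limit is automatic for the relative canonical model. \emph{Separatedness} — uniqueness of the limit — follows from uniqueness of log canonical models, so together these verify the valuative criterion (allowing the finite extension of $R$) for the stack. Combining the bounded, locally closed quotient presentation with the valuative criterion then yields the proper Deligne--Mumford stack $\sK_{\mathbf{a},d,v}$.

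I expect the main obstacle to be the second step rather than the MMP input of the third. Defining the moduli \emph{functor} so that it is representable requires controlling precisely how the marked divisor $\sum a_i D_i$ deforms in families, since naive Hilbert-scheme flatness of the $D_i$ interacts badly with the $S_2$ and $\mathbb{Q}$-Cartier conditions on $X$; this is exactly the content of the representability results of \cite[Chapter 4]{kollarmodulibook}. Restricting to seminormal test schemes is what makes the pullback operation of Remark \ref{remark:the:terrible:condition:on:family:of:div:is:automatic:over:normal:bases} well-defined and hence the functor unambiguous, which is why the statement is phrased for seminormal $B$.
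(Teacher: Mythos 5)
First, a point of comparison: the paper does not prove this statement. Theorem \ref{thm:thm4.79} is imported verbatim from \cite[Theorems 4.1 and 4.8]{kollarmodulibook}, so there is no internal argument to measure your proposal against; what you have written is a sketch of the construction carried out in the cited reference. At that level your blueprint --- boundedness, a locally closed stable locus in a rigidified parameter space, a $\mathrm{PGL}$-quotient with finite stabilizers, and the valuative criterion via stable reduction --- is the standard architecture and is essentially the one Koll\'ar follows.

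If one tried to execute the sketch as written, however, two steps would fail. First, the locus where $K_{X/B}+\sum a_iD_i$ is $\mathbb{Q}$-Cartier is \emph{not} locally closed on an arbitrary base; one must first use boundedness to fix a uniform index $N$ and then impose the representable condition that the relevant reflexive power $\omega_{X/B}^{[N]}\bigl(N\textstyle\sum a_iD_i\bigr)$ is an invertible sheaf whose formation commutes with base change. This, together with replacing flat families of the $D_i$ by relative Mumford divisors --- naive Hilbert-scheme flatness of the $D_i$ parametrizes the wrong objects, as the Hassett example recalled in Section \ref{sec:normalization} shows, since the divisorial limit need not be the flat limit --- is the actual content of \cite[Chapter 4]{kollarmodulibook}; you correctly flag this as the hard part, but the local-closedness claim as you state it is false without the index bound. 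Second, in the valuative criterion the generic fiber is an arbitrary slc stable pair, possibly non-normal, so one cannot simply ``run the MMP on the total space'': one must normalize, add the conductor, construct the relative log canonical model of each component (using the log canonical case of the MMP, since finite generation can fail for slc pairs, as the paper itself notes), and then reglue via Koll\'ar's gluing theory; the existence and finiteness of that gluing is itself a substantial theorem and is the reason the slc case of stable reduction was settled only relatively recently.
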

	
	\begin{notation}\label{notation:the:moduli:space:we:use} Often, when $d$ plays no role, we will omit the subscript $d$ in $\sK_{\mathbf{a},d,v}$. We denote by $\sK_{\mathbf{a}}:=\bigcup_v \sK_{\mathbf{a},v}$.
	\end{notation}

	Finally, we will need the notion of a locally stable family.
	
	\begin{definition}\cite[Definition-Theorem 4.7]{kollarmodulibook}\label{def:locstab}
		Let $S$ be a reduced scheme and $f: (X, \Delta) \to S$
		a projective family of pairs. Assume that $(X_s,\Delta_s)$ is slc for every $s\in S$. Then $f: (X, \Delta) \to S$ is \emph{locally stable} or
		slc if the following equivalent conditions hold.
		\begin{enumerate}
			\item $K_{X/S} + \Delta$ is $\bQ$-Cartier,
			\item $f_T : (X_T, \Delta_T ) \to T$ is locally stable whenever $T$ is the spectrum of a DVR
			and $q: T \to S$ is a morphism.
		\end{enumerate}
	\end{definition}
	
	\begin{remark} Note that the definition of a family of stable pairs over a reduced base is \'etale local. Therefore, the space $\sK_{\bfa}$ represents the functor of stable families with coefficients $\bfa$ for reduced Deligne--Mumford stacks.
	\end{remark}
	
	\begin{remark} Koll\'ar has introduced a condition on the reflexive powers of relative pluri-canonical sheaves (see \cite[Chapter 9]{kollarmodulibook} and also \cite{AH, BI}) and the K-flatness condition on the family of divisors \cite{Kflat} which give a well-behaved functor of stable families over arbitrary bases representable by a Deligne--Mumford stack locally of finite type whose seminormalization is the space $\sK_{\bfa}$ introduced above. The reason we avoid this and work with seminormalizations in this paper is twofold. First, checking these conditions over non-reduced bases is subtle, and it is not clear that K-flatness in particular is preserved by the constructions in this paper (see especially the proof of Theorem \ref{theorem:flip-like:morphisms}). Second, the reduction morphisms we produce are ultimately only well-defined on the normalization of the moduli space (see Section \ref{sec:normalization}).
	\end{remark}

	\section{Preliminaries from the MMP} 
	
	In this section, we collect some preliminary results from the minimal model program that we need for the proofs of the main theorems. 
	
	\subsection{Dlt modifications and canonical models} Let $(X,D)$ be a log pair with $D$ is a $\mathbb{Q}$-divisor. One of the main obstacles in ``reducing weights'' on the divisor in a stable pair, is that the pair is not necessarily $\Q$-factorial. Indeed, while for a pair $(X,D)$ the divisor $K_X + D$ is required to be $\bQ$-Cartier, there is no reason for $D$ itself to be $\bQ$-Cartier. A somewhat standard approach that allows one to perturb coefficients on a divisor is using dlt modifications.

	\begin{theorem}[Small dlt modification]\cite[Corollary 1.37]{Kollarsingmmp}\label{dltmod}
		Let $(X,D)$ be a dlt pair with $D$ a boundary. There is a proper birational morphism $g: \tildeX \to X$ such that
		\begin{enumerate}
			\item $\tildeX$ is $\bQ$-factorial,
			\item the morphism $g$ is small, 
			\item $\left(\tildeX, g^{-1}_*D\right)$ is dlt, and
			\item $\mathrm{discrep}\left(\tildeX, g^{-1}_*D\right) = \mathrm{discrep}(X, D)$.
		\end{enumerate}
	\end{theorem}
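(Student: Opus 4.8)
The plan is to realize $\tildeX$ as the output of a relative minimal model program over $X$, using the dlt hypothesis to force the resulting modification to be both small and crepant, from which properties (1)--(4) follow.

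First I would choose a log resolution $g_0 \colon Y \to X$ of $(X,D)$ that is an isomorphism over the open locus where $(X,D)$ is snc, and let $E_1, \ldots, E_k$ be the prime $g_0$-exceptional divisors. Setting $\Delta_Y := (g_0)^{-1}_* D + \sum_i E_i$, the pair $(Y,\Delta_Y)$ is snc, hence $\bQ$-factorial and dlt, and $\Delta_Y$ is a boundary since $D$ is. Writing $a_i := a(E_i; X, D)$ for the discrepancies, the definition of discrepancy gives
$$
K_Y + \Delta_Y = g_0^*(K_X + D) + \sum_i (a_i + 1) E_i .
$$
Because $(X,D)$ is dlt, every exceptional discrepancy satisfies $a_i > -1$, so the correction term $\sum_i (a_i+1) E_i$ is effective, $g_0$-exceptional, and has strictly positive coefficient along each $E_i$. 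In particular $K_Y + \Delta_Y \sim_{\bQ, X} \sum_i (a_i+1) E_i \ge 0$.

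Next I would run the $(K_Y + \Delta_Y)$-MMP over $X$ with scaling of an ample divisor; this exists and terminates by \cite{BCHM10} (after the standard reduction from the $\bQ$-factorial dlt case to the klt case, or via the dlt MMP), since we are minimizing a divisor that is $\bQ$-linearly equivalent over $X$ to an effective, exceptional divisor. Let $g \colon \tildeX \to X$ be the resulting model and $\Delta_{\tildeX}$ the pushforward of $\Delta_Y$. Then $\tildeX$ is $\bQ$-factorial and $(\tildeX, \Delta_{\tildeX})$ is dlt, as both properties are preserved by the steps of the MMP, giving (1) and (3). Moreover, since the program is run over $X$, every divisorial contraction contracts a divisor that is exceptional over $X$, so no divisor of $X$ is extracted and the only divisors that can be contracted are the strict transforms of the $E_i$.

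The key remaining point, which I expect to be the main obstacle, is to show that $g$ is small, and I would deduce this from the negativity lemma. After termination $K_{\tildeX} + \Delta_{\tildeX}$ is nef over $X$ and equals $g^*(K_X + D) + \sum_i (a_i+1)\widetilde{E}_i$, where $\widetilde{E}_i$ denotes the strict transform of $E_i$; as the exceptional correction $\sum_i (a_i+1)\widetilde{E}_i$ is then both effective and nef over $X$, the negativity lemma forces it to vanish. Since each coefficient $a_i+1$ is strictly positive, this means every $\widetilde{E}_i$ has been contracted, so $g$ is an isomorphism in codimension one, hence small, giving (2). Finally, the vanishing of the correction term yields the crepant identity $K_{\tildeX} + g^{-1}_* D = g^*(K_X + D)$, so discrepancies agree for every divisor over $X$, which gives (4). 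The delicate points to verify are the existence and termination of the relative MMP for the dlt pair $(Y,\Delta_Y)$ and the applicability of the negativity lemma in this relative birational setting.
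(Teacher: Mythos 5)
The paper does not prove this statement; it is quoted verbatim from \cite[Corollary 1.37]{Kollarsingmmp}, so there is no internal proof to compare against. Your argument is the standard proof of that result (thrifty log resolution, relative MMP over $X$, negativity lemma) and it is correct. The one step where genuine care is needed is your claim that every $g_0$-exceptional divisor satisfies $a_i > -1$: for an arbitrary log resolution of a non-klt dlt pair this is \emph{false} (blowing up a stratum of the snc locus where coefficient-one components of $D$ meet produces discrepancy exactly $-1$), and it holds in your setting only because you insisted that $g_0$ be an isomorphism over the snc locus of $(X,D)$, so that every exceptional center lies in the non-snc locus, where the dlt condition forces $a_i > -1$; the existence of such a ``thrifty'' resolution is itself a theorem (Szab\'o), worth citing explicitly. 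The remaining external input --- existence and termination of the relative MMP for the $\bQ$-factorial dlt pair $(Y,\Delta_Y)$ over the birational base $X$, which you correctly flag as the delicate point --- is exactly where \cite{BCHM10} enters in the cited reference (bigness over $X$ is automatic since $Y \to X$ is birational, and one perturbs to reduce the dlt case to the klt case). The negativity-lemma step and the deductions of (1)--(4) from the vanishing of $\sum_i (a_i+1)\widetilde{E}_i$ are all correct.
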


	\begin{definition}\label{def:bigondlt} Let $f : (X,D) \to B$ be a projective morphism such that $(X,D)$ is a dlt pair and let $\Delta$ be any Weil divisor on $X$. We say that $K_X + \Delta$ is $f$-big if its restriction to the generic fiber is big. Note that the generic fiber is normal, so this makes sense.
	\end{definition} 
	
	We will need the following standard lemma and its corollary.
	
	\begin{lemma}\label{lemma:stable:pairs:that:agree:in:codim:1:iso}
		Let $f:(X,D_X) \dashrightarrow (Y,D_Y)$ be a birational rational map of klt pairs that is an isomorphism in codimension one on both $X$ and $Y$, and assume that $f_*(D_X)=D_Y$.
		Assume further that the canonical models of $(X,D_X)$ and $(Y,D_Y)$ exist. Then $f$ induces an isomorphism of canonical models.
	\end{lemma}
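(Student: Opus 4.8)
The plan is to identify the two canonical models by identifying their homogeneous coordinate rings, using the description of the canonical model as $\Proj R(X, K_X + D_X)$ recorded in Definition~\ref{def:stable:model}. The guiding principle is that the canonical ring is computed on a big open set, and is therefore insensitive to the indeterminacy loci of $f$ and $f^{-1}$, which by hypothesis have codimension at least two.

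First I would fix the geometry. Since $(X,D_X)$ and $(Y,D_Y)$ are klt, the varieties $X$ and $Y$ are normal. As $f$ is an isomorphism in codimension one on both sides, there exist open subschemes $U \subseteq X$ and $V \subseteq Y$, each with complement of codimension at least two, together with an isomorphism $\phi := f|_U : U \xrightarrow{\ \sim\ } V$. I would then check that $\phi$ matches the two log canonical divisors \emph{on the nose}. The canonical sheaf is intrinsic to a normal variety, so $\phi$ carries $K_U$ to $K_V$ as Weil divisor classes. Moreover every irreducible component of $D_X$ is a prime divisor and hence meets $U$ (because $X \setminus U$ has codimension at least two); the hypothesis $f_*(D_X) = D_Y$ then says precisely that $\phi$ identifies the components of $D_X|_U$ with those of $D_Y|_V$, with equal coefficients. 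Consequently $\phi^*\bigl((K_Y + D_Y)|_V\bigr) = (K_X + D_X)|_U$ as $\Q$-Weil divisors.

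Next, for every $m$ such that $m(K_X + D_X)$ is an integral Weil divisor, I would use that $X$ and $Y$ are normal and $S_2$ to invoke Hartogs-type extension for reflexive sheaves: restriction to the big open sets gives isomorphisms
$$
H^0\bigl(X, \calO_X(m(K_X + D_X))\bigr) \cong H^0\bigl(U, \calO_U(m(K_X+D_X)|_U)\bigr),
$$
and likewise for $Y$ and $V$. Composing with the identification induced by $\phi$ and the divisor matching of the previous step, these isomorphisms are compatible with multiplication of sections, and so assemble into an isomorphism of graded rings $R(X, K_X + D_X) \cong R(Y, K_Y + D_Y)$. Taking $\Proj$ and invoking Definition~\ref{def:stable:model} produces an isomorphism of the canonical models. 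Because the graded isomorphism is induced by $\phi$, which agrees with $f$ on $U$, this isomorphism intertwines the canonical maps $X \dashrightarrow \Proj R(X,K_X+D_X)$ and $Y \dashrightarrow \Proj R(Y, K_Y+D_Y)$; hence it is the isomorphism induced by $f$.

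All of the steps are standard, and the only point genuinely requiring care is the bookkeeping in the second and third steps: one must verify that $\phi$ matches the canonical divisor and the boundary exactly, so that each graded piece — not merely the resulting $\Proj$ — is canonically identified. This is where both hypotheses enter essentially. The codimension-one condition guarantees that no divisor is contracted or extracted (so no exceptional term corrupts the matching of $K_X$ with $K_Y$), and $f_*(D_X) = D_Y$ guarantees that the boundaries correspond with their coefficients; without either condition the canonical rings would in general genuinely differ.
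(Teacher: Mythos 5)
Your proposal is correct and follows essentially the same route as the paper: restrict to the big open set where $f$ is an isomorphism, identify the graded pieces of the two log canonical rings via Hartogs/normality since the complements have codimension at least two, and conclude that the canonical models are $\Proj$ of the same graded algebra. The only difference is that you spell out the verification that $\phi$ matches $K_X + D_X$ with $K_Y + D_Y$ on the nose (using $f_*(D_X)=D_Y$ and that every component of $D_X$ meets the big open set), a point the paper's terser proof leaves implicit.
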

	\begin{proof}Let $L_X:=\calO_X(m(K_X+D_X))$ and $L_Y:=\calO_Y(m(K_Y+D_Y))$ with $m$ so that they are both line bundles. Then if $U$ is the open subset where $X$ and $Y$ are isomorphic, $$H^0\left(X,L_X^{\otimes m}\right)=H^0\left(U,\left(L_X^{\otimes m}\right)_U\right)=H^0\left(U,\left(L_Y^{\otimes m}\right)_U\right)=H^0\left(Y,L_Y^{\otimes m}\right)$$
		since the complement of $U$ has codimension at least 2 in both $X$ and $Y$.
		Then the canonical models of $X$ and $Y$ are Proj of the same graded algebra.
	\end{proof}
	\begin{cor}\label{cor:canonical:model:does:not:depend:on:Qfactorialization}
		Let $(X,D)$ be a klt pair, and let $p:X' \to X$ and $q:X'' \to X$ be two small dlt modifications. Then:
		\begin{enumerate}
			\item The pairs $\left(X',p_*^{-1}(D)\right)$ and $\left(X'',q_*^{-1}(D)\right)$  are klt, and
			\item The pairs in $(1)$ have the same canonical model if it exists.
		\end{enumerate}
	\end{cor}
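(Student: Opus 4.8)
The plan is to reduce part (2) to Lemma~\ref{lemma:stable:pairs:that:agree:in:codim:1:iso} by comparing the two $\Q$-factorializations through their common target $X$, after first checking in part (1) that both pairs are klt.

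For part (1), I would argue as follows. Since $(X,D)$ is klt it is in particular dlt with $D$ a boundary, so Theorem~\ref{dltmod} applies and $p,q$ exist as claimed. The key point is that $p$ is small, hence an isomorphism in codimension one with no $p$-exceptional divisors; since $K_X+D$ is $\Q$-Cartier we therefore have the crepant identity $K_{X'}+p_*^{-1}D=p^*(K_X+D)$. Consequently, for every geometric valuation $E$ over $X'$ the discrepancy $a(E;X',p_*^{-1}D)$ equals $a(E;X,D)$, and klt-ness---the condition that all such discrepancies exceed $-1$---descends from $(X,D)$ to $(X',p_*^{-1}D)$. The identical argument applies to $(X'',q_*^{-1}D)$.

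For part (2), I would consider the induced birational map $f:=q^{-1}\circ p: X'\dashrightarrow X''$. Because $p$ and $q$ are both small, $f$ is an isomorphism in codimension one on both $X'$ and $X''$, and since $p_*^{-1}D$ and $q_*^{-1}D$ are the strict transforms of the same divisor $D$, we have $f_*(p_*^{-1}D)=q_*^{-1}D$. By part (1) both pairs are klt. It remains to address the hypothesis ``if it exists'': since $p$ is small and proper, pushforward gives $p_*\calO_{X'}(m(K_{X'}+p_*^{-1}D))=\calO_X(m(K_X+D))$ for all sufficiently divisible $m$, so the section rings $R(X',K_{X'}+p_*^{-1}D)$ and $R(X,K_X+D)$ coincide, and likewise for $X''$. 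Thus the canonical model of either $\Q$-factorialization exists precisely when that of $(X,D)$ does, and in that case Lemma~\ref{lemma:stable:pairs:that:agree:in:codim:1:iso} applies to $f$ and yields an isomorphism of the two canonical models.

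The only genuinely delicate point is the crepant identity $K_{X'}+p_*^{-1}D=p^*(K_X+D)$ underlying both parts: it is exactly here that smallness is used, to guarantee that no exceptional divisor---with a possibly bad discrepancy---appears in the comparison. Everything else is bookkeeping with strict transforms together with the observation that a small proper birational morphism does not change the section ring of a $\Q$-Cartier divisor.
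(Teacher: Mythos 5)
Your proposal is correct and follows essentially the same route as the paper: part (1) via the crepant identity coming from smallness (the paper simply cites \cite[Lemma 2.30]{KM98} for this), and part (2) by observing that $X'\dashrightarrow X''$ is an isomorphism in codimension one and invoking Lemma \ref{lemma:stable:pairs:that:agree:in:codim:1:iso}. Your extra remark that the section rings of $(X',p_*^{-1}D)$, $(X'',q_*^{-1}D)$ and $(X,D)$ all agree, so that existence of one canonical model is equivalent to existence of the others, is a small but welcome clarification of the ``if it exists'' hypothesis that the paper leaves implicit.
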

	\begin{proof} (1) follows from \cite[Lemma 2.30]{KM98}. We now show (2). Since $X' \to X$ and $X'' \to X$ are isomorphisms in codimension one, so is $X' \dashrightarrow X''$. The result then follows from Lemma \ref{lemma:stable:pairs:that:agree:in:codim:1:iso}.
	\end{proof}
	\begin{notation}\label{notation:canonical:model}
		Consider a klt pair $\left(X,\sum a_i D_i\right)$ and let $0<b_i\le a_i$. Let $\tildeX \to X$ be a small dlt modification as above, and let $\tildeD_i\subseteq \tildeX$ be the proper transform of $D_i$. We will refer to the canonical model of $\left(\tildeX,\sum b_i \tildeD_i\right)$ as ``the canonical model of $\left(X,\sum b_i D_i\right)$". This is independent of the choice of $\tildeX$ by Corollary \ref{cor:canonical:model:does:not:depend:on:Qfactorialization}. 
	\end{notation}
	
	We also need the following version of the base point free theorem for degenerations of klt pairs.
	\begin{lemma}\label{lemma:bpf:thm}
		Let $R$ be a DVR essentially of finite type over $k$ with closed point $p$. Let $(X,D)$ be a klt pair with a flat proper morphism $f:X \to \Spec(R)$. If $L$ is a nef line bundle such that $L-K_X-D$ is $f$-nef and big, then for $m$ divisible enough, $L_{|X_p}^{\otimes m}$ is base point free and the morphism induced by $|L^{\otimes m}|$ on $X$ restricts to the morphism induced by $\left|L_{|X_p}^{\otimes m}\right|$ on $X_p$.
	\end{lemma}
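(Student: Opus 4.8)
The plan is to combine the relative base point free theorem with relative Kawamata--Viehweg vanishing. Throughout I would write $mL$ for the line bundle $L^{\otimes m} = \calO_X(mL)$, let $\pi$ be a uniformizer of $R$, and note that the closed fiber $X_p = f^{-1}(p)$ is the principal Cartier divisor $\{\pi = 0\}$, so that $\calO_X(-X_p) \cong \calO_X$ via multiplication by $\pi$ (which acts injectively on $\calO_X$ since $X$ is flat over $R$).

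First I would apply the relative base point free theorem to $f:(X,D)\to \Spec R$. Since $(X,D)$ is klt, $L$ is $f$-nef (being nef), and $L - K_X - D$ is $f$-nef and $f$-big by hypothesis, the theorem gives that $L$ is $f$-semiample. Thus there is an integer $m_0$ so that for every $m$ divisible by $m_0$ the map $f^* f_* \calO_X(mL) \to \calO_X(mL)$ is surjective. As $\Spec R$ is affine, $f_*\calO_X(mL)$ is globally generated by $H^0(X,\calO_X(mL))$, so the composite $H^0(X,\calO_X(mL))\otimes_k \calO_X \to \calO_X(mL)$ is surjective; hence $|mL|$ is base point free on $X$ and defines a morphism $\phi_m$ over $\Spec R$. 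Restricting to $X_p$, the map $H^0(X,\calO_X(mL))\otimes_k \calO_{X_p}\to \calO_{X_p}(mL|_{X_p})$ is surjective, so $L_{|X_p}^{\otimes m}$ is globally generated, i.e.\ base point free.

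It remains to show that $\phi_m|_{X_p}$ is the morphism attached to the \emph{complete} system $\left|L_{|X_p}^{\otimes m}\right|$. By the previous paragraph $\phi_m|_{X_p}$ is defined by the image of the restriction map
$$
r : H^0(X,\calO_X(mL)) \longrightarrow H^0\!\left(X_p,\calO_{X_p}(mL|_{X_p})\right),
$$
so it suffices to prove that $r$ is surjective. For this I would push forward the short exact sequence
$$
0 \longrightarrow \calO_X(mL) \xrightarrow{\ \cdot\pi\ } \calO_X(mL) \longrightarrow \calO_{X_p}(mL|_{X_p}) \longrightarrow 0
$$
along $f$. Since $mL - (K_X+D) = (m-1)L + (L-K_X-D)$ is a sum of an $f$-nef class and an $f$-nef, $f$-big class, it is itself $f$-nef and $f$-big; as $(X,D)$ is klt, relative Kawamata--Viehweg vanishing gives $R^1 f_*\calO_X(mL)=0$. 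The long exact sequence then yields a surjection $f_*\calO_X(mL) \twoheadrightarrow f_*\calO_{X_p}(mL|_{X_p})$, and taking global sections over the affine base $\Spec R$ recovers precisely the map $r$. Hence $r$ is surjective, its image is all of $H^0\!\left(X_p, L_{|X_p}^{\otimes m}\right)$, and $\phi_m|_{X_p}$ is the morphism defined by the complete linear system, as claimed.

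The main technical input is the vanishing $R^1 f_*\calO_X(mL)=0$: it is exactly what upgrades $f$-global generation to the surjectivity of $r$ and thereby identifies the fiberwise morphism with the one given by the full linear system. Everything else is formal once $L$ is known to be $f$-semiample, and both the base point free theorem and the vanishing apply to all $m$ divisible by $m_0$, so the two conclusions hold simultaneously for $m$ divisible enough.
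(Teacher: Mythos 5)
Your proof is correct and follows essentially the same route as the paper: the relative base point free theorem gives global generation of $L^{\otimes m}$, and relative Kawamata--Viehweg vanishing ($R^1f_*\calO_X(mL)=0$) gives surjectivity of the restriction map on sections. The only cosmetic difference is that you extract this surjectivity from the short exact sequence given by the uniformizer, whereas the paper invokes cohomology and base change; these are interchangeable here.
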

	In particular, if $(X,D) \to \Spec(R)$ is a stable family (see Section \ref{section:moduli:of:stable:pairs}) such that $(X,D)$ is klt and $K_X+D$ is $f$-nef and big, then $m(K_{X}+D)_{|X_p}=m(K_{X_p}+D_p)$ is semi-ample for $m$ divisible enough. 
	\begin{proof}
		We know from the base point free theorem \cite[Theorem 6-1-13]{KMM}, for $m$ divisible enough, $L^{\otimes m}$ is globally generated and thus its restriction to a fiber is as well.
		To conclude, note that $R^1f_*L = 0$ from relative Kawamata--Viehweg vanishing and thus by cohomology and base change, $H^0(X_p, L_p^{\otimes m}) = H^0(X, L^{\otimes m})|_{X_p}$. 
	\end{proof}
	
	\subsection{MMP with scaling}\label{subsection:mmpscaling} In this subsection, we recall the version of the MMP with scaling we will use throughout the paper.
	We refer the reader to \cite{HK10} and \cite{BCHM10} for more details.
	
	Let $(X,D)$ be a $\mathbb{Q}$-factorial pair with $D$ a big $\mathbb{Q}$-divisor. Assume that $K_X+D$ is big, and let $H$ be an effective divisor such that the pair $(X,D+H)$ is a klt stable pair.
	Then to obtain the stable model of $(X,D)$ one can first run an MMP for $(X,D)$ with scaling by $H$ to obtain a minimal model $(X^{min}, D^{min})$ of $(X, D)$ \cite[Corollary 1.4.2]{BCHM10}.
	After that, one can apply the base point free theorem to the klt pair $(X^{min}, D^{min})$ to get the stable model. 
	
	In our setting, we only assume that $K_X + D$ is big, but not necessarily that $D$ is big. In this case, we may pick a big effective divisor 
	$$
	D' \sim_{\Q} \varepsilon(K_X + D)
	$$
	for $\varepsilon > 0$ small, such that $(X, D + D' + H)$ is klt. Then the canonical model of $(X, D + D')$ is the same as that of $(X,D)$ so we can run MMP with scaling by $H$ on $(X,D + D')$ where now the divisor $D + D'$ is big, and then apply the base point free theorem to compute the canonical model. In particular, we may apply this method to a small dlt modification to compute the canonical models of $(X, D + tH)$ for $t \in [0,1]$ where $H$ is effective and $(X, D + H)$ is a klt stable pair.

	\begin{prop}\label{prop:MMP:with:scaling:Jakub} Let $(X,D) \to B$ be a klt and $\mathbb{Q}$-factorial pair over $B$ with both $D$ or $K_X+D$ big over $B$. Let $H$ be an effective divisor so that the pair $(X,D+H)$ is klt, and let $t_0 \in (0,1)$. Assume that $K_X+D+tH$ is nef
		over $B$ for every $t_0 \le t \le 1$. Let $(Y,D_Y+t_0H_Y)$ be the canonical model of $(X,D+t_0H)$ over $B$. Let $(Z,D_Z+(t_0-\varepsilon)H_Z)$ denote the canonical model of $(Y,D+(t_0-\varepsilon)H)$ over $B$ for $\varepsilon$ small enough (see Notation \ref{notation:canonical:model}).
		
		Then, for all $\varepsilon > 0$ small enough, we have that:
		\begin{enumerate}[(I)]
			\item \label{itm:MMP:with:scaling:Jakub:no_divisors} There is a small birational morphism $Z \to Y$, and
			
			\item   \label{itm:MMP:with:scaling:Jakub:canonical_model_2}    $(Z,D_Z+(t_0-\varepsilon)H_Z)$ is the canonical model of $(X,D+(t_0-\varepsilon)H)$ over $B$.
		\end{enumerate}
	\end{prop}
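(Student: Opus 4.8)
The plan is to realize both canonical models as the output of a single MMP with scaling of $H$ run relatively over $Y$, and to exploit the fact that $K_X+D+t_0H$ is not merely nef but the pullback of an \emph{ample} class from $Y$ in order to control every step below the wall $t_0$. First I would produce the contraction $\pi\colon X\to Y$. Since $(X,D+H)$ is klt and $t_0\in(0,1)$, the pair $(X,D+t_0H)$ is klt, and by hypothesis $K_X+D+t_0H$ is nef and $f$-big (Definition \ref{def:bigondlt}). By the base point free theorem (Lemma \ref{lemma:bpf:thm} and \cite[Theorem 6-1-13]{KMM}) it is semiample over $B$, so its ample model is a birational morphism $\pi\colon X\to Y$ with $K_X+D+t_0H=\pi^*A$, where $A=K_Y+D_Y+t_0H_Y$ is ample over $B$; this is precisely the canonical model defining $(Y,D_Y+t_0H_Y)$. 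I then fix a small dlt modification $\mu\colon \tilde Y\to Y$ as in Notation \ref{notation:canonical:model}, so that $\mu$ is small, $K_{\tilde Y}+D_{\tilde Y}+t_0H_{\tilde Y}=\mu^*A$ is nef and big, and $Z=\mathrm{CanModel}(\tilde Y,D_{\tilde Y}+(t_0-\varepsilon)H_{\tilde Y})$.

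Next I would run the $(K+D+(t_0-\varepsilon)H)$-MMP with scaling of $H$ on $\tilde Y$ over $B$. Writing $L_s:=K_{\tilde Y}+D_{\tilde Y}+sH_{\tilde Y}=\mu^*A-(t_0-s)H_{\tilde Y}$, the divisor $L_{t_0}=\mu^*A$ is nef, so $s=t_0$ is a valid starting value for the scaling; by \cite[Corollary 1.4.2]{BCHM10} (enlarging the boundary to be big as in Section \ref{subsection:mmpscaling} if necessary) this MMP terminates, and by finiteness of models \cite[Corollary 1.1.5]{BCHM10} there is $\varepsilon_0>0$ so that the resulting model, hence $Z$, is independent of $\varepsilon\in(0,\varepsilon_0)$. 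The heart of the matter is that every step is \emph{small} over $Y$. A step contracts a ray $R$ with $L_s\cdot R=0$ at the relevant threshold $s\le t_0$; since $L_{t_0}$ stays nef throughout the scaling, $0\le L_{t_0}\cdot R=(t_0-s)H\cdot R$, and because $\mu^*A\cdot R$ is bounded below on the finitely many extremal rays on which it is positive, for $\varepsilon<\varepsilon_0$ the only rays that go negative are those with $L_{t_0}\cdot R=0$. As $L_{t_0}=\mu^*A$ is the pullback of an ample divisor, such curves lie in the exceptional locus of the (by induction, small) morphism to $Y$, which has codimension $\ge 2$; hence the contraction cannot be divisorial and must be a flip. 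Inductively each intermediate model remains small over $Y$, so $Z\to Y$ is small, proving \eqref{itm:MMP:with:scaling:Jakub:no_divisors}.

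For \eqref{itm:MMP:with:scaling:Jakub:canonical_model_2} I would compute $\mathrm{CanModel}(X,D+(t_0-\varepsilon)H)$ by the same MMP run from $X$. By the identical numerical computation together with finiteness of extremal rays, for $\varepsilon$ small every $L_{t_0-\varepsilon}$-negative extremal ray on $X$ is $\pi$-vertical, so the MMP from $X$ proceeds entirely over $Y$: it contracts the $\pi$-exceptional divisors (on whose contracted curves $H$ is positive) and flips the remaining $\pi$-vertical rays. Since all steps are over $Y$ and $L_{t_0}=\pi^*A$ stays semiample, the output admits a morphism to $Y$ and computes the relative canonical algebra $\bigoplus_m\pi_*\calO_X(m(K_X+D+(t_0-\varepsilon)H))$ over $Y$. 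This agrees with $\bigoplus_m\mu_*\calO_{\tilde Y}(m(K_{\tilde Y}+D_{\tilde Y}+(t_0-\varepsilon)H_{\tilde Y}))$, both computing the reflexive algebra of $(Y,D_Y+(t_0-\varepsilon)H_Y)$: on the $\mu$-side because $\mu$ is small, and on the $\pi$-side because the $\pi$-exceptional divisors, being contracted by the ample model, contribute no extra sections. Hence $\mathrm{CanModel}(X,D+(t_0-\varepsilon)H)\cong Z$, with the identification ultimately justified by Corollary \ref{cor:canonical:model:does:not:depend:on:Qfactorialization} and Lemma \ref{lemma:stable:pairs:that:agree:in:codim:1:iso} once the $\pi$-exceptional divisors are seen to drop out.

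The step I expect to be the main obstacle is \eqref{itm:MMP:with:scaling:Jakub:no_divisors}, namely ruling out divisorial contractions below the wall. Everything hinges on $K_X+D+t_0H$ being the pullback of an ample class from $Y$, so that the curves that become negative as one crosses $t_0$ are confined to a locus of codimension $\ge 2$ over $Y$; tracking this inductively, while simultaneously extracting a \emph{uniform} $\varepsilon$ from the finiteness statements of \cite{BCHM10}, is the delicate point. The identification in \eqref{itm:MMP:with:scaling:Jakub:canonical_model_2} is then comparatively formal, modulo the comparison of relative canonical algebras across the possibly divisorial morphism $\pi$.
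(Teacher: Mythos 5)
Your strategy is genuinely different from the paper's. The paper does not run the MMP with scaling step by step at all: it directly forms the canonical model $g^{+}\colon X^{+}\to Y$ of $(X,D+(t_0-\varepsilon)H)$ \emph{over $Y$} (so the morphism to $Y$ exists by construction), observes that $H$ is $g$-nef because $K_X+D+tH$ is nef for $t\ge t_0$ and trivial on $g$-vertical curves at $t=t_0$, shows $-\pi_*H$ is $g^{+}$-ample, and deduces smallness of $g^{+}$ in one stroke from the negativity lemma (Lemma \ref{lemma:small}); ampleness of $K_{X^{+}}+D^{+}+(t_0-\varepsilon)H^{+}=(g^{+})^{*}(K_Y+D_Y+t_0H_Y)-\varepsilon H^{+}$ over $B$ then gives (II) without any comparison of section rings. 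Your route, by contrast, tracks each step of the scaling MMP and identifies canonical algebras, which is workable in principle but forces you to carry two extra burdens the paper avoids.

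There are two concrete gaps. First, in (I) you prove inductively that each \emph{flip} is small over $Y$ and conclude ``so $Z\to Y$ is small,'' but the output of the MMP with scaling is a \emph{minimal} model $\tilde Y'$ on which $L_{t_0-\varepsilon}$ is merely nef; $Z$ is its ample model, and the final contraction $\tilde Y'\to Z$ is not an MMP step covered by your induction. A priori it could contract a divisor, and worse, it is not even clear that it is a contraction over $Y$: a curve with $L_{t_0-\varepsilon}\cdot C=0$ could have $\mu'^{*}A\cdot C=\varepsilon H'\cdot C>0$, in which case $Z\to Y$ would not exist as a morphism. Ruling this out requires exactly the kind of ``$t_0-\varepsilon$ is not a wall'' statement you defer to finiteness, applied at the ample-model stage, not just at the flipping stage. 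Second, in (II) your assertion that the $\pi$-exceptional divisors ``contribute no extra sections'' needs the identity $K_X+D+(t_0-\varepsilon)H=\pi^{*}\bigl(K_Y+D_Y+(t_0-\varepsilon)H_Y\bigr)+\varepsilon F$ with $F=\pi^{*}\pi_*H-H$ \emph{effective} and exceptional; effectivity of $F$ is not free — it follows from the negativity lemma only once you know $H$ is $\pi$-nef, which is precisely the observation (nefness of $K_X+D+tH$ for $t$ slightly above $t_0$ combined with $\pi$-triviality at $t_0$) on which the paper's whole proof pivots and which your write-up never records. Both gaps are fixable, but filling them essentially reproduces the paper's Lemma \ref{lemma:small} argument, at which point the step-by-step MMP becomes unnecessary.
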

	
	\begin{proof} Let $g : X \to Y$ denote the natural morphism, and let $$g^+ : (X^+, D^+ + (t_0 - \varepsilon)H^+) \to Y$$ be the canonical model of $(X, D + (t_0 - \varepsilon)H)$ over $Y$ for $0 < \varepsilon \ll 1$. Let $\pi : X \dashrightarrow X^+$ denote the resulting birational contraction. The canonical model is independent of $\varepsilon$ small enough by \cite[Corollary 1.1.5]{BCHM10}. Now $K_X + D + (t_0 + \varepsilon)H$ is nef by assumption, and $K_X + D + t_0H \equiv_g 0$, so $H$ is $g$-nef. On the other hand, $H^+$ is $\mathbb{Q}$-Cartier, $K_{X^+} + D^+ + (t_0 - \varepsilon)H^+$ is $g^+$-ample, and $Y$ is the log canonical model of $(X^+, D^+ + t_0H^+)$ over $B$. It follows that $-H^+ = -\pi_*H$ is $g^+$-ample. We conclude that $g^+$ is small by the following lemma. 
		
		\begin{lemma}\label{lemma:small}
			Let $g: X \to Y$ and $g^+: X^+ \to Y$ be projective and birational. Assume that $\pi: X \dashrightarrow X^+$ is a rational contraction. Let $H$ be an effective, $g$-nef divisor, such that $-\pi_*(H)$ is $g^+$-ample. 
			Then $g^+$ is small.\end{lemma}
		
		\begin{proof} 
			Without loss of generality, we can assume that $\pi$ is a morphism. Indeed, let $h : W \to X$ and $h^+ : W \to X^+$ be a resolution of $\pi$. Then $h^*H$ is effective and nef over $Y$. Since $\pi$ is a rational contraction, $\pi_*H = h^+_*h^*H$, so we may replace $(X,H, \pi)$ with $(W, h^*H, h^+)$. Moreover, we suppose that $g^+$ is not an isomorphism, otherwise we are done.
			
			For the sake of contradiction, suppose that there exists a divisor $E \subset \mathrm{Ex}(g^+)$. Then $\pi$ is an isomorphism over the generic point of $E$. Thus, there exists a curve $C \subset E$ contracted by $g^+$, and a curve $C' \subset X$, such that $C'$ is not contained in the $\pi$-exceptional locus and $\pi_*C' = C$. Note that $F: = \pi^*\pi_*H - H$ is $\pi$-exceptional and $-F$ is $\pi$-nef, so $F$ is effective by the negativity lemma \cite[Lemma 3.39]{KM98}. Since $\pi_*H$ is $g^+$-ample, $H$ is $g = (g^+ \circ \pi)$-nef, and $C' \not \subset \Supp F$, we have that
			$$
			0>C\cdot\pi_*H= \pi_*C'\cdot\pi_*H = C' \cdot \pi^*\pi_*H = C'\cdot (H + F) \ge C'\cdot H > 0,
			$$
			which is a contradiction. \end{proof}

		As $-\varepsilon H^+$ is ample over $Y$ for $0 < \varepsilon \ll 1$, and since $K_Y + D_Y + t_0H_Y$ is ample over $B$, we conclude that \[K_{X^+}+D^+ + (t_0 - \varepsilon)H^+ = (g^+)^*(K_Y + D_Y + t_0H_Y)- \varepsilon H^+\] is ample over $B$. Since the discrepancies of $(X^+, D^+ + (t_0 - \varepsilon)H^+)$ are greater than or equal to those of $(X, D + (t_0 - \varepsilon)H)$, we see that $X^+$ is the canonical model of $(X, D + (t_0 - \varepsilon)H)$ over $B$. Moreover, as $g^+$ is small, it follows that $X^+$ is is also the canonical model of $(Y, D + (t_0 -\varepsilon)H)$ over $B$.  By uniqueness of canonical models, $X^+ = Z$, and the proposition follows.\end{proof} 
	

	\section{Wall-crossing loci in the moduli space}\label{sec:themodulispaces}
	
	The goal of this section is to define the natural moduli spaces $\sM_t$, depending on a parameter $t \in [0,1]$, which admit a wall-crossing structure. The basic idea is as follows. Let $f : (X, \bfa D) \to B$ be a stable family of interest parametrized by some smooth and irreducible base $B$ and denote by $\bfv(t) := t\bfa + (1-t)\bfb$ for $t \in [0,1]$. Suppose furthermore that $K_{X/B} + \bfb D$ is $f$-big. Then taking the relative canonical model of $(X, \bfv(t)D)$ over $B$ gives us an a priori \emph{rational map} $B \dashrightarrow \sK_{\bfv(t)}$. We will see in Theorem \ref{thm:morphismextends} below that, under some mild assumptions, this extends to a \emph{morphism} $\Phi_t : B \to \sK_{\bfv(t)}$ which on some open set is induced by sending $b \in U \subset B$ to the point classifying the canonical model of $(X_b, \bfv(t)D_b)$. 
	
	Then $\sM_t$, defined as the seminormalization of the scheme theoretic image of $\Phi_t$, carries a universal family of $\bfv(t)$-weighted stable pairs which are limits of the canonical models parametrized by $U$. We will see in Corollary \ref{cor:finiteness} that, as $t$ varies, there are only finitely many different moduli spaces $\sM_t$ and finitely many universal families, up to rescaling the boundary.

	\enspace

	\begin{notation}\label{notation:ab} For coefficient $n$-vectors $\bfa, \bfb$, we write $\bfb \le \bfa$ if $b_i \le a_i$ for all $i = 1, \ldots, n$. For $t \in [0,1]$, we will denote $\bfv(t):= t\bfa + (1-t)\bfb$. 
	\end{notation}
	
	We are now ready to present the main theorem of this section.
	
	\begin{theorem}\label{thm:morphismextends}
		Let $f: (X, \mathbf{a}D) \to B$ be a stable family over a smooth irreducible quasi-projective scheme $B$. Suppose that the generic fiber is klt and that $K_X + \mathbf{v}(t)D$ is $f$-big for each $t \in [0,1]$.
		
		
		\begin{enumerate}\setlength\itemsep{.5em}
			\item  There exists a unique morphism $\Phi_t:B \to \sK_{\bfv(t)}$ and a nonempty open subset $U \subset B$ such that $\Phi_t(u)$ is the point classifying the canonical model of $(X_u,\mathbf{v}(t)D_u)$ for all $u \in U$;
			
			\item There are finitely many $t_i \in \mathbb{Q}$, with $0=t_0 <t_1<...<t_m=1$, which satisfy the following condition. If we denote by $(Z_{t},\mathbf{v}(t)\Delta_{t})$ the family of stable pairs classified by $\Phi_t$, then for every
			$t_i < s_1 \le s_2 <t_{i+1}$ the underlying $n$-marked families $(Z_{s_1}; \Delta_{s_1,1}, \ldots, \Delta_{s_1, n})$ and $(Z_{s_2}; \Delta_{s_2,1}, \ldots, \Delta_{s_2, n})$ are equal, so that $(Z_{s_2},\mathbf{v}(s_2)\Delta_{s_2})=(Z_{s_1},\mathbf{v}(s_2)\Delta_{s_1})$.
			\item For every $t \in [0,1]$, the stable family $f_t : (Z_t,\mathbf{v}(t)\Delta_t) \to B$ is the relative canonical model of $(X,\mathbf{v}(t)D)$ over $B$.
		\end{enumerate}
	\end{theorem}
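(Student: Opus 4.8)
The plan is to realize every relative canonical model in the family as the output of a single relative MMP with scaling run on a fixed $\mathbb{Q}$-factorial model of the total space, and then to upgrade the resulting birational models to genuine stable families. First I would reparametrize the problem into the language of Subsection \ref{subsection:mmpscaling}: since $\bfb \le \bfa$, the divisor $H := (\bfa - \bfb)D$ is effective and $\mathbf{v}(t)D = \bfb D + tH$, so that $(X, \bfb D + H) = (X, \bfa D)$ is a klt stable pair at $t = 1$ and we are asking for the relative canonical models of $(X, \bfb D + tH)$ over $B$ as $t$ decreases from $1$ to $0$.

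The first substantive step is to check that the total space $(X, \bfa D)$ is klt, not merely slc. Since the fibers are $S_2$ and $B$ is smooth, $X$ is $S_2$; since the fibers are reduced and the generic fiber is normal, $X$ is regular in codimension one, hence normal, so $(X, \bfa D)$ is at least lc. If $E$ were an lc place of $(X, \bfa D)$, its center could not dominate $B$ (the generic fiber is klt), so it would lie over a proper closed subset $W \subsetneq B$; choosing an snc divisor $D_B$ through $W$ and using $\operatorname{mult}_E(f^*D_B) \ge 1$ would force $a(E, X, \bfa D + f^*D_B) \le -2$, contradicting that $(X, \bfa D + f^*D_B)$ is slc (Definition \ref{def:family:st:pair:over:smooth:base}). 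Hence $(X, \bfa D)$ is klt.

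Next I would fix a small $\mathbb{Q}$-factorialization $\tilde{X} \to X$ (Theorem \ref{dltmod}), pull back $H$ and $D$, and run the relative MMP with scaling by $\tilde{H}$ over $B$, starting from $t = 1$ where $K_{\tilde{X}/B} + \bfa\tilde{D}$ is nef. Iterating Proposition \ref{prop:MMP:with:scaling:Jakub} and invoking \cite[Corollary 1.1.5]{BCHM10} produces finitely many rational walls $0 = t_0 < t_1 < \dots < t_m = 1$ such that on each open chamber $(t_i, t_{i+1})$ the birational model underlying the relative canonical model is constant, only the coefficient vector $\mathbf{v}(t)$ varying; this is part (2), and by construction the resulting $(Z_t, \mathbf{v}(t)\Delta_t) \to B$ is the relative canonical model of $(X, \mathbf{v}(t)D)$ over $B$ in the sense of Notation \ref{notation:canonical:model}, which is part (3). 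Uniqueness in part (1) then follows because any two candidate morphisms to the separated stack $\sK_{\mathbf{v}(t)}$ agree on the dense open klt locus $U$ and hence everywhere.

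The main obstacle, which is the content of part (1), is to promote each relative canonical model $(Z_t, \mathbf{v}(t)\Delta_t) \to B$ to an honest stable family defining the morphism $\Phi_t : B \to \sK_{\mathbf{v}(t)}$. Relative ampleness and $\mathbb{Q}$-Cartierness of $K_{Z_t/B} + \mathbf{v}(t)\Delta_t$ are built into the canonical model, so the crux is fiberwise: I must show that the fibers are slc and that forming the relative canonical model commutes with restriction to fibers, so that over the klt locus $U$ the fiber over $u$ is the canonical model of $(X_u, \mathbf{v}(t)D_u)$ and over the boundary it is the stable model of the (possibly non-normal) slc fiber. This is exactly where the philosophy of running the MMP on one-parameter smoothings enters: restricting to a DVR $T \to B$ meeting $U$, the total space $(X_T, \bfa D_T)$ is again klt by the argument above, and Lemma \ref{lemma:bpf:thm} together with Kawamata--Viehweg vanishing shows that the pluricanonical system defining the relative canonical model restricts to the one defining the stable model of the central fiber; Alexeev's theorem guarantees the resulting central fiber is $S_2$. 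Carrying this out uniformly over $B$ identifies the fibers of $Z_t$ with stable pairs, shows $(Z_t, \mathbf{v}(t)\Delta_t) \to B$ is a stable family, and yields $\Phi_t$ with the asserted description on $U$.
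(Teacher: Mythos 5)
Your overall architecture matches the paper's: reduce to the total space being klt (your direct lc-place argument is essentially the proof of \cite[Corollary 4.56]{kollarmodulibook}, which the paper simply cites), pass to a small $\mathbb{Q}$-factorialization, run the relative MMP with scaling by $(\bfa-\bfb)\tilde{D}$, get the finitely many rational walls from \cite[Corollary 1.1.5]{BCHM10}, and get stability of $(Z_t,\bfv(t)\Delta_t)\to B$ from \cite[Corollary 4.57]{kollarmodulibook} (or your DVR reformulation of it). Parts (2) and (3) and the uniqueness in (1) are handled the same way in both arguments.

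However, there is a genuine gap in your treatment of the remaining and most delicate step of part (1): showing that over a nonempty open $U\subset B$ the fiber $(Z_t)_u$ is the canonical model of $(X_u,\bfv(t)D_u)$. You propose to restrict to a DVR $T\to B$ and invoke Lemma \ref{lemma:bpf:thm} plus Kawamata--Viehweg vanishing. This does not work as stated, for two reasons. First, Lemma \ref{lemma:bpf:thm} requires the line bundle in question to be relatively \emph{nef}, whereas $K_{X}+\bfv(t)D$ is only assumed $f$-big on $X$ (and on $\tilde X$); the lemma only becomes applicable after passing to a relative minimal model over $T$. Second, and more seriously, the central fiber of a relative minimal (or canonical) model over a DVR is \emph{not} in general a minimal (or canonical) model of the central fiber of the original family --- it is the stable limit of the canonical models of the nearby fibers, and the discrepancy between these two objects is precisely the source of the wall-crossing phenomena studied in this paper (see, e.g., Lemma \ref{lemma:the:reduction:morphisms:contract:curves:in:D}, where the central fiber of the relative canonical model is only the stable model of the slc limit, not of the klt fibers). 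So the DVR argument cannot establish the fiberwise description even at a general point. What is actually needed is that the formation of the relative log canonical ring commutes with restriction to general fibers, i.e., deformation invariance of log plurigenera. The paper obtains this by taking a log resolution $(Y,\Delta_Y^+)\to(\tilde X,\bfv(t)\tilde D)$ with $\Delta_Y^+$ effective and snc, shrinking $U$ so that $Y\to U$ and $\Supp(\Delta_Y^+)\to U$ are smooth, and then applying \cite[Theorem 4.2]{hacon2013birational}; no elementary vanishing argument substitutes for this input. Your proof needs this step (or an equivalent discrepancy-restriction argument valid only at general $u$) to close the gap.
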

	
	\begin{remark}
		Observe that, in the particular case where the divisor $K_X+\mathbf{v}(t)D$ restricted to the generic fiber is \emph{ample} for every $t \in [0,1]$, we automatically have a non-empty open subset $U$ and a morphism $U \to \sK_{\mathbf{v}(t)}$. In this special case, the content of the theorem is that we can extend this morphism to $B$. This is the case, for example, in dimension one \cite{Hassett}. 
	\end{remark}
	The proof proceeds as follows. We first show the existence of the rational numbers $t_i$, the so-called walls. We will begin by defining $f_t:(Z_t,\mathbf{v}(t)\Delta_t) \to B$ as the canonical model of $(X,\mathbf{v}(t)D)$ over $B$. Since $B$ is smooth, \cite[Theorem 4.54]{kollarmodulibook} guarantees that 
	$f_t$ is stable, whereas \cite{BCHM10} provides us with the finitely many $t_i$. Finally, to show that $f_t$ is the relative canonical model over an open set of the base, we use an invariance of plurigenera result of \cite[Section 4]{hacon2013birational}.
	
	
	\begin{proof}[Proof of Theorem \ref{thm:morphismextends}]
		We begin by observing that, since the generic fiber of $f$ is klt, the pair $(X,\mathbf{a}D)$ is klt from \cite[Corollary 4.56]{kollarmodulibook}. If $X$ was also $\mathbb{Q}$-factorial, we would consider
		the canonical model $(Z_t,\mathbf{v}(t)\Delta_t)$ of the pair $(X,\mathbf{v}(t)D)$ over $B$. The morphism $(Z_t,\mathbf{v}(t)\Delta_t)\to B$ would be stable (\cite[Corollary 4.57]{kollarmodulibook}), and would induce the morphisms $\Phi_t$.
		However, since $X$ may not be $\mathbb{Q}$-factorial, we need to replace $X$ with a small $\mathbb{Q}$-factorial modification in the argument above. 
		In particular, consider a small $\mathbb{Q}$-factorial modification $\pi: \tildeX \to X$, let $\mathbf{a}\tildeD$ be the proper transform of $\mathbf{a}D$, and denote by $\tilde{f} : \tildeX \to B$ the composition $f \circ \pi$. Since $\pi$ is small, observe that
		\begin{itemize}
			\item $\pi^*(K_X+\mathbf{a}D)= K_{\tildeX}+\mathbf{a}\tildeD$, so $K_{\tildeX} + \mathbf{a}\tildeD$ is $\tilde{f}$-big and $\tilde{f}$-nef over $B$ since it is the pull-back of an $f$-ample divisor;
			\item $\pi_*(\bfa D) = \bfa D$ and the discrepancies of $(\tildeX,\mathbf{a}\tildeD)$ are the same as those of $(X,\mathbf{a}D)$, and
			\item $\pi^{-1}_*(\mathbf{a}D) = \mathbf{a}\tildeD$.
		\end{itemize}
		In particular, the pair $(\tildeX,\mathbf{a}\tildeD)$ is a weak canonical model of $(X,\mathbf{a}D)$, and from \cite[Corollary 4.57]{kollarmodulibook} the morphism $(\tildeX,\mathbf{a}\tildeD)\to B$ is locally stable. Now $\tildeX$ is $\mathbb{Q}$-factorial, so for every $t \in [0,1]$ the morphism $(\tildeX,\mathbf{v}(t)\tildeD)\to B$ is also locally stable. Then we can run MMP with scaling by $(\bfa - \bfb)\tilde{D}$ as described in Subsection \ref{subsection:mmpscaling} to take the canonical model $(Z_t,\mathbf{v}(t)\Delta_t)$ of the pair $(X,\mathbf{v}(t)D)$ over $B$ for all $t \in [0,1]$. By \cite[Corollary 4.57]{kollarmodulibook} the map $(Z_t,\mathbf{v}(t)\Delta_t)\to B$ is stable.

		Now the key input is \cite[Corollary 1.1.5]{BCHM10}. Indeed, by \emph{loc. cit.} there are rational numbers $t_i$ with $0=t_0 < t_1 < ... <t_m=1$
		such that, for every $t_i < s_1 \le s_2 < t_{i+1}$, the pair $(Z_{s_1},\mathbf{v}(s_1) \Delta_{s_1})$ is
		obtained from $(Z_{s_2},\mathbf{v}(s_2) \Delta_{s_2})$ by perturbing the coefficients, i.e. the underlying marked varieties are the same so that $$(Z_{s_2},\mathbf{v}(s_2) \Delta_{s_2})=(Z_{s_1},\mathbf{v}(s_2) \Delta_{s_1}).$$ 
		
		We are left with proving that there exists an open subset $U \subset B$ such that the morphisms $\Phi_t$ on $U$ can be described by sending a pair $(X_u, \mathbf{a}D_u)$ to the canonical model of $(X_u, \mathbf{v}(t)D_u)$. Or in other words, that taking the canonical model of $(\tildeX,\bfv(t)\tildeD)$ gives the fiberwise canonical models.
		
		We begin by fixing a $t$ and taking a log-resolution $\xi:(Y,\Delta_Y)\to (\tildeX,\bfv(t)\tildeD)$, where we denote by $\Delta_Y$ the divisor on $Y$ such that $K_Y+\Delta_Y = \xi^*(K_{\tildeX}+\bfv(t)\tildeD)$. From \cite[Proposition 2.36]{KM98}, we may assume $\Delta_Y$ is of the form $\Delta_Y=\Delta_Y^+ - \Delta_Y^-$, where $\Delta_Y^+$ and $\Delta_Y^-$ are effective, $\Delta_Y^-$ is $\xi$-exceptional, and $\Delta_Y^+$ is smooth. Then from \cite[Corollary 3.53]{KM98}, the canonical models of $(Y,\Delta_Y^+)$ and $(\tildeX,\bfv(t)\tildeD)$ agree. Moreover, we can find an open subset $U\subseteq B$ where the morphisms $Y|_U\to U$ and $\Supp(\Delta_Y^+)|_U\to U$ are smooth, as being smooth is an open condition. Therefore, we can now apply \cite[Theorem 4.2]{hacon2013birational}: the formation of the canonical models commutes with base change. So for every $u\in U$, the canonical model of $(Y,\Delta_Y^+)_u$ is the fiber over $u$ of the canonical model of $(Y,\Delta_Y^+)\to U$, namely $(Z_t,\bfv (t)\Delta_t)_u$. To conclude, note that after further shrinking $U$, we can assume that $(Y, \Delta_Y) \to (\tilde{X}, \bfv(t)\tilde{D})$ is a fiberwise log resolution for $u \in U$. Then
		$$
		K_{Y_u} + (\Delta^+_Y)_u - \xi_u^*(K_{\tilde{X}_u} + (\bfv(t)\tilde{D})_u) 
		$$
		is both effective and $\xi_u$-exceptional, so the log canonical model of $(\tilde{X}_u, (\bfv(t)\tilde{D})_u)$ equals the log canonical model of $(Y_u, (\Delta_Y^+)_u)$, which equals $(Z_t, \bfv(t)\Delta_t)_u$ as required.\end{proof}

	\begin{remark}\label{remark:unique:walls}
		As phrased, the set of rational numbers $\{t_i\}_{i = 0}^m$ of Theorem \ref{thm:morphismextends} is not unique, as we can always subdivide the interval $[0,1]$ further by adding extra $t_j$ and relabeling. However, there is a minimal choice for this set, given by the intersection of all the possible sets of $t_i$. These are the $t_i$ where the canonical models $(Z_t, \bfv(t)\Delta_t)$ actually change. 
	\end{remark}
	
	This leads to the following definition.
	
	\begin{definition}\label{definiton:walls}
		Given  $\mathbf{b} \le \mathbf{a} $ and $B$ as above, we will denote by $(\mathbf{a}\to\mathbf{b})$-\emph{walls}, pronounced ``$\bfa$-to-$\bfb$ walls'', the minimal choice of numbers $$0 = t_0 <  \ldots < t_i < \ldots < t_m = 1$$ as in Theorem \ref{thm:morphismextends}. 
	\end{definition}

	\begin{remark}\label{remark:consequences:second:bullett:thm:4.4}
		We record two consequences of Theorem \ref{thm:morphismextends}:
		\begin{itemize}
			\item For every $i$ and for every rational $s \in (t_i, t_{i+1})$, the divisor $ \mathbf{v}(s) \Delta_{s} $ is $\bQ$-Cartier on $Z_{s}$, and
			\item The pair $\left(Z_{t_i}, \mathbf{v}(t_i) \Delta_{t_i} \right)$ is the canonical model of $\left(Z_{s_{1}}, \mathbf{v}(t_i) \Delta_{s_{1}} \right)$, and $\left(Z_{t_{i+1}}, \mathbf{v}(t_{i+1}) \Delta_{t_{i+1}} \right)$ is the canonical model of $\left(Z_{s_{1}}, \mathbf{v}(t_{i+1}) \Delta_{s_{1}} \right)$ for $t_i < s_i < t_{i + 1}$. \end{itemize}

		The first consequence holds since, for every $t_i<s_1<s_2<t_{i+1}$,
		the  divisors $K_{Z_{s_2}}+\mathbf{v}(s_2)\Delta_2$ and $K_{Z_{s_2}}+\mathbf{v}(s_1)\Delta_2$ are $\mathbb{Q}$-Cartier, so their difference is also $\mathbb{Q}$-Cartier.
		The second consequence follows from the definition of the canonical model. In particular, to check \cite[Definition 3.50 (4)]{KM98}, one can use that the discrepancies of a pair $(X,\sum a_i D_i)$ are continuous functions of the coefficients $a_i$.
	\end{remark}

	We are ready to define the moduli spaces $\sM_t$ which form the natural setting for wall-crossing. 
	
	\begin{definition}\label{Def:calM} Let $f: (X, \bfa D) \to B$ be a stable family satisfying the conditions of Theorem \ref{thm:morphismextends} and suppose that $B$ is proper. Let $\Phi_t$ be as in the conclusion of the theorem. Define $\sM_t$ to be the seminormalization of the image of $\Phi_t:B \to \sK_{\mathbf{v}(t)}$ for $t \in [0,1]$. We will denote by $(\calX_t, \bfv(t) \calD_t)$ the universal family of $\bfv(t)$-weighted stable pairs over $\sM_t$. We will denote by $\sM_\bfa$ and $(\calX_\bfa, \bfa \calD_\bfa)$ (resp. $\sM_\bfb$ and $(\calX_\bfb, \bfb\calD_\bfb)$) the case when $t = 1$ (resp. $t = 0$). 
	\end{definition}
	
	\begin{remark}\label{rem:Mt:proper} Note that $\sM_t$ is proper as both $B$ and $\Phi_t$ are proper, and the seminormalization preserves properness. 
	\end{remark}
	
	\begin{remark}\label{rem:situations}
		
		The reader should keep in mind the following situations which are the most common in practice, noting that the setup of Theorem \ref{thm:morphismextends} allows us the flexibility to consider more general settings. 
		
		\begin{itemize} 
			\item Given a stable family of snc pairs of interest $(X^0, \bfa D^0) \to U$ over a smooth but non-proper base (e.g. $(\mathbb{P}^n, \text{smooth hypersurface})$) we have an induced map $U \to \sK_\bfa$. This may be compactified to a map $B \to \sK_\bfa$ from a smooth, proper base $B$ using \cite[Théorème 16.6]{LMB00}, Chow's Lemma and resolution of singularities. Pulling back the universal family to $B$ gives us a family $(X, \bfa D)$ of stable pairs for which we can apply the proposition. In this case, $\sM_t$ can be thought of as the seminormalization of the $\bfv(t)$-weighted stable pair compactification of the original family of interest. 
			
			\item Let $\sK_0 \subset \sK_{\bfa}$ be some irreducible component of the moduli space $\sK_{\bfa}$ which generically parametrizes klt pairs. Then as above, up to taking a finite cover by a scheme and resolving singularities, we obtain an $\bfa$-weighted stable family $f : (X, \bfa D) \to B$ over a smooth and proper base with a morphism $B \to \sK_\bfa$ dominating the component $\sK_0$. In this case, $\sM_1$ is simply the seminormalization of $\sK_0$. If we assume further that a generic pair lying over $\sK_0$ is $\bfv(t)$-weighted stable for all $t \in [0,1]$, then $\sM_t$ are birational models of $\sM_1$ which carry $\bfv(t)$-weighted stable families. 
			
			\item Let $\sK^\circ \subset \sK_{\bfa}$ be a reduced and irreducible locally closed substack which parametrizes klt pairs. After shrinking $\sK^\circ$, we can assume without loss of generality that it is smooth. After taking a finite cover of the closure of $\sK^\circ$ and resolving singularities, we obtain a stable family $f : (X, \bfa D) \to B$ such that $B$ dominates $\sK^\circ$ under the morphism $\Phi_1 : B \to \sK_{\bfa}$. Then $\sM_1$ is the seminormalization of the closure of $\sK^\circ$, and under the assumptions of Theorem \ref{thm:morphismextends}, the $\bfa$-weighted stable klt pairs parametrized by $\sK^\circ$ are also klt and stable with weights $\bfv(t)$ all $t \in [0,1]$ and thus $\sK^\circ$ admits a monomorphism to $\sK_{\bfv(t)}$, which extend to the morphisms $\Phi_t : B \to \sK_{\bfv(t)}$ given by the theorem. Thus $\sM_t$ are birational models of $\sM_1$ carrying $\bfv(t)$-weighted stable families as before. This case is a hybrid of the above two. 
			
		\end{itemize} 
	\end{remark}

	We have the following immediate corollary of Theorem \ref{thm:morphismextends}. 
	
	\begin{cor}\label{cor:finiteness} For each $t_i < s < s' < t_{i + 1}$,  $\sM_{s} \cong \sM_{s'}$ and the universal families $(\calX_{s}, \bfv(s) \calD_{s})$ and $(\calX_{s'}, \bfv(s') \calD_{s'})$ have isomorphic underlying marked families so that $(\calX_{s'}, \bfv(s') \calD_{s'}) \cong (\calX_s, \bfv(s') \calD_s)$. Moreover, these isomorphisms fit in a commutative diagram below, where each side is cartesian.
		\begin{equation*}
			\xymatrix{
				Z_s \ar@{=}[r] \ar[d] \ar[drr] & Z_{s'} \ar[d] \ar[drr] & \\
				B \ar@{=}[r] \ar[drr] & B \ar[drr] & \calX_s  \ar[r]^(.4){\cong} \ar[d] & \calX_{s'} \ar[d]  \\
				& & \sM_s \ar[r]^(.4){\cong} & \sM_{s'} 
			}
		\end{equation*}
	\end{cor}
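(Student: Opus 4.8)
The plan is to extract everything from part~(2) of Theorem~\ref{thm:morphismextends}. For $s,s'$ in a common chamber $(t_i,t_{i+1})$, that part gives that the two relative canonical models over $B$ have the same underlying $n$-marked family; writing this common family as $(Z;\Delta_1,\dots,\Delta_n)\to B$, we have $(Z_s,\bfv(s)\Delta_s)=(Z,\bfv(s)\Delta)$ and $(Z_{s'},\bfv(s')\Delta_{s'})=(Z,\bfv(s')\Delta)$. Thus one marked family is simultaneously $\bfv(s)$- and $\bfv(s')$-stable, and $\Phi_s,\Phi_{s'}$ are its two classifying maps into $\sK_{\bfv(s)}$ and $\sK_{\bfv(s')}$. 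The first goal is the isomorphism $\sM_s\cong\sM_{s'}$; the second is to match the universal families and assemble the diagram.

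To build $\sM_s\to\sM_{s'}$, I would show that the universal marked family $(\calX_s;\calD_{s,1},\dots,\calD_{s,n})\to\sM_s$ is itself $\bfv(s')$-stable. The map $\Phi_s\colon B\to\sM_s$ is proper (Remark~\ref{rem:Mt:proper}) and surjective, and $B$ is seminormal, so $\Phi_s$ factors through $\sM_s$ and $Z_s=\Phi_s^*\calX_s$. Hence every fiber of $\calX_s\to\sM_s$ is isomorphic, as a marked family, to a fiber of $Z\to B$; since $(Z,\bfv(s')\Delta)\to B$ is $\bfv(s')$-stable, the fibers of $\calX_s\to\sM_s$ are $\bfv(s')$-stable pairs. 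Granting full $\bfv(s')$-stability, the family defines a morphism $\sM_s\to\sK_{\bfv(s')}$ whose composite with $\Phi_s$ is $\Phi_{s'}$; its image therefore lies in the image of $\Phi_{s'}$, and since $\sM_s$ is seminormal it factors through $\sM_{s'}$ by the universal property of seminormalization. Swapping $s$ and $s'$ produces the inverse, giving $\sM_s\cong\sM_{s'}$.

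The main obstacle is the missing half of $\bfv(s')$-stability: that $K_{\calX_s/\sM_s}+\bfv(s')\calD_s$ is $\mathbb{Q}$-Cartier. The individual $\calD_{s,i}$ need not be $\mathbb{Q}$-Cartier, and $\mathbb{Q}$-Cartierness does not descend along the surjection $\Phi_s$ for free. Over $B$ this is precisely Remark~\ref{remark:consequences:second:bullett:thm:4.4}, which gives that $(\bfa-\bfb)\Delta$ is $\mathbb{Q}$-Cartier on $Z$. I would write $N\bigl(K_{\calX_s/\sM_s}+\bfv(s')\calD_s\bigr)=N\bigl(K_{\calX_s/\sM_s}+\bfv(s)\calD_s\bigr)+N(s'-s)(\bfa-\bfb)\calD_s$, with the first summand Cartier for suitable $N$ since $(\calX_s,\bfv(s)\calD_s)$ is $\bfv(s)$-stable, and reduce the whole problem to showing $(\bfa-\bfb)\calD_s$ is $\mathbb{Q}$-Cartier on $\calX_s$; I expect to obtain this by descending the corresponding statement on $Z$ along $\Phi_s$, this being the one genuinely technical point. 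Relative ampleness of $K+\bfv(s')\calD_s$ is then automatic, as it holds fiberwise on proper fibers.

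Finally I would assemble the diagram. Under the identification $\sM_s\cong\sM_{s'}$ compatible with $\Phi_s,\Phi_{s'}$, the universal families are identified as marked families, yielding $(\calX_{s'},\bfv(s')\calD_{s'})\cong(\calX_s,\bfv(s')\calD_s)$. The cartesian squares on each side are exactly the identities $Z_s=\Phi_s^*\calX_s$ and $Z_{s'}=\Phi_{s'}^*\calX_{s'}$ coming from the definition of the pulled-back universal family, and these pullbacks coincide under the identifications already established; commutativity of the diagram then follows by chasing $B$ through $\Phi_s$ and $\Phi_{s'}$ into the identified universal families.
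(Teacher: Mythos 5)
Your overall strategy matches the paper's: show that the universal family over $\sM_s$, reweighted to $\bfv(s')$, is itself a stable family, use it to produce a morphism $\sM_s \to \sM_{s'}$ via the universal property of seminormalization, and get the inverse by symmetry. You have also correctly isolated the crux: everything except the $\mathbb{Q}$-Cartierness of $K_{\calX_s/\sM_s} + \bfv(s')\calD_s$ descends easily from $B$ (fiberwise slc-ness and relative ampleness are checked on fibers, and $B \to \sM_s$ is a proper surjection).

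However, at exactly that crux your argument has a genuine gap. You propose to reduce to showing $(\bfa-\bfb)\calD_s$ is $\mathbb{Q}$-Cartier on $\calX_s$ and to ``descend the corresponding statement on $Z$ along $\Phi_s$,'' but $\mathbb{Q}$-Cartierness of a divisor does \emph{not} descend along a proper surjection that is not flat, so this plan does not go through as stated — and you offer no mechanism for it. The paper's resolution is different and is the whole point of the proof: it never tries to descend $\mathbb{Q}$-Cartierness of a divisor directly. Instead it invokes the equivalence in Definition \ref{def:locstab}, by which local stability of $(\calX_s, \bfv(s')\calD_s) \to \sM_s$ (which packages the $\mathbb{Q}$-Cartier condition) can be tested after base change to spectra of DVRs. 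Any $T = \Spec(R) \to \sM_s$ lifts, after a possibly ramified extension of DVRs $T' \to T$ (harmless by Riemann--Hurwitz, as in \cite[Proposition 2.10]{kollarmodulibook}), through $\Phi_s : B \to \sM_s$ by the valuative criterion of properness; local stability of the pullback to $T'$ then follows from Theorem \ref{thm:morphismextends}~(2) applied over $B$. You need this DVR-testing step (or an equivalent substitute) to close the argument; without it the claimed stability of $(\calX_s, \bfv(s')\calD_s) \to \sM_s$, and hence the morphism $\sM_s \to \sM_{s'}$, is not established.
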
 
	
	\begin{proof} We claim that the morphism $(\calX_s,\mathbf{v}(s')\calD_s) \to \sM_{s'}$ is locally stable. Since $(\calX_s,\mathbf{v}(s')\calD_s) \to \sM_{s}$ is a well-defined family of pairs, to prove the claim we can use Definition \ref{def:locstab}. In particular, by \cite[Definition-Theorem 4.7]{kollarmodulibook} it suffices to check that for every DVR $R$ and for every morphism
		$T=\Spec(R) \to \sM_{s}$, the family $(\calX_s,\mathbf{v}(s')\calD_s)_T \to T$ is locally stable. 
		By Riemann-Hurwitz, it suffices to check that the family is locally stable, after a further possibly ramified extension of DVRs $T' \to T$, as in the proof of \cite[Proposition 2.10]{kollarmodulibook} (see also \cite[Section 11.23]{kollarmodulibook}). In particular,
		by the valuative criterion of properness, we can assume that the morphism $T \to \sM_{s}$ factors through $\Phi_{s}:B \to \sM_{s}$ as follows.
		$$\xymatrix{ B \ar[rr]^{\Phi_{s}}& & \sM_{s} \\
			& T. \ar[ur] \ar[ul] & }$$
		Thus, we can replace $\sM_{s}$ and its universal family with $B$ and the family lying over $B$.
		The claim then follows from Theorem \ref{thm:morphismextends} (2).
		
		Observe now that $(\calX_s,\mathbf{v}(s')\calD_s) \to \sM_{s}$ is in fact stable, i.e.  $K_{\calX_s/\sM_{s}}+\mathbf{v}(s')\calD_s$ is relatively ample over $\sM_{s}$. Indeed, by Theorem \ref{thm:morphismextends} (2) it is relatively ample when pulled back to $B$, and $B \to \sM_{s}$ is a proper surjection. Therefore, the family $(\calX_s,\mathbf{v}(s')\calD_s) \to \sM_{s}$ induces a morphism $\sM_{s} \to \sM_{s'}$. The argument is symmetric in $s$ and $s'$ so we also have a morphism in the other direction. 
		
		Finally, the fact that these morphisms are inverses and are induced by isomorphisms $$(\calX_s', \bfv(s')\calD_{s'}) \cong (\calX_s, \bfv(s')\calD_s)$$ can be checked pointwise over the moduli space and fiberwise on the universal family and thus follows from Theorem \ref{thm:morphismextends} (2). Commutativity is clear by construction.\end{proof}
	
	Given the corollary, we will introduce the following notation. 
	
	\begin{notation}\label{notation:chamber} For consecutive walls $t_i < t_{i+1}$, we will denote by $$(\calX_{(t_i, t_{i + 1})}, \calD_{(t_i, t_{i + 1})}) \to \sM_{(t_i,t_{i + 1})}$$ the moduli space and universal family of varieties marked with divisor for any $s \in (t_i, t_{i+1})$. 
	\end{notation}
	
	\section{Flip-like morphisms}\label{section flip-like morphisms}
	
	In this section, we will prove the existence of \emph{flip-like morphisms} that relate the moduli spaces $\sM_t$ defined in the previous section as $t$-varies across the $(\bfa \to \bfb)$-walls. With notation as in \ref{notation:ab}, suppose we are in the situation of Theorem \ref{thm:morphismextends}. Recall that the spaces $\sM_t$ as in Definition \ref{Def:calM} admit morphisms $B \to \sM_t \to \calK_{\bfv(t)}$. If $0 = t_0 < \ldots < t_i < \ldots < t_m = 1$ are the $(\bfa \to \bfb)$-walls and $t_i < s_i < s_i' < t_{i + 1}$, then the flip-like morphisms assemble into the diagram below. 
	\begin{equation}\label{fliplike:diagram}
		\xymatrix{
			B \ar[d] \ar[dr] \ar[drr] \ar[drrr] \ar[drrrr]  & \\
			\sM_{\mathbf{a}}=\sM_{t_0} \ar[d] & \sM_{s_0} \ar[l] \ar@{<->}[r]_{\cong} \ar[d] & \sM_{s_0'} \ar[r] \ar[d] &  \ar[d]   \sM_{t_1} & \sM_{s_1}\ar[l] \ar[d] \ar@{<->}[r]_{\cong} & \dots  & \\
			\sK_{t_0} &  \sK_{s_{0}} &  \sK_{s_{0}'} &  \sK_{t_1} &  \sK_{s_{1}} & \dots &  
		} 
	\end{equation}
	
	By Theorem \ref{thm:morphismextends}, we obtain a diagram without the horizontal arrows where the composition $B \to \sK_{t}$ is the morphism $\Phi_t$ and $\sM_t$ is the seminormalization of the image of $\Phi_t$, and by Corollary \ref{cor:finiteness}, we have the horizontal isomorphisms $\sM_{s_0} \cong \sM_{s_0'}$. We can summarize the situation as follows.  
	
	\begin{equation}
		\xymatrix{
			B \ar[d] \ar[dr] \ar[drr] \ar[drrr] \ar[drrrr] \\
			\sM_{t_0} & \sM_{(t_0,t_1)}  &  \sM_{t_1} & \sM_{(t_1,t_2)}  & \sM_{t_2} \dots  
		}    
	\end{equation}
	
	Note here that $\sM_{(t_0, t_1)}$ admit morphisms to $\sK_{s}$ for each $t_0 < s < t_1$, but the target and these morphisms are actually varying even though the source moduli space is independent of $s$. 
	
	For each $t$, the base $B$ carries a stable family $(Z_t, \bfv(t)\Delta_t)$ which is pulled back from the universal family $(\calX_t, \bfv(t)\calD_t) \to \sM_t$. We know from Theorem \ref{thm:morphismextends} that the marked pair $(Z_s, \Delta_s)$ is independent of $s$ for $t_i < s < t_{i + 1}$ with only the coefficients changing. Moreover, $(Z_{t_i}, \bfv(t_i)\Delta_{t_i})$ and $(Z_{t_{i + 1}}, \bfv(t_{i+1})\Delta_{t_{i + 1}})$ respectively are the canonical models of $(Z_s, \bfv(t_i)\Delta_s)$ and $(Z_s, \bfv(t_{i + 1})\Delta_s)$ (Remark \ref{remark:consequences:second:bullett:thm:4.4}). We showed in Corollary \ref{cor:finiteness} that the first fact descends to a statement on the universal family. Putting this together, we have the diagram below, where the squares coming out of the paper are cartesian. 
	
	\begin{equation}
		\label{eq:diag_to_be_filled}
		\resizebox{\linewidth}{!}{%
			\xymatrix{
				(Z_{t_i}, \mathbf{v}(t_i) \Delta_{t_i} )  \ar[d] \ar[ddr]  & \ar[l] (Z_{s}, \mathbf{v}(s) \Delta_{s} )  \ar[d] \ar[r] \ar[ddr]
				&
				(Z_{t_{i+1}}, \mathbf{v}(t_{i+1}) \Delta_{t_{i+1}} ) \ar[d] \ar[ddr]
				\\
				B \ar[ddr] \ar@{=}[r] & \ar[ddr] B \ar@{=}[r] & \ar[ddr] B  \\
				& (\calX_{t_i}, \mathbf{v}(t_i)\calD_{t_i} ) \ar[d] & 
				(\calX_{(t_1, t_{i +1})}, \mathbf{v}(s)\calD_{(t_1, t_{i+1})} )  \ar[d] &  
				(\calX_{t_{i+1}}, \mathbf{v}(t_{i+1})\calD_{t_{i+1}} )  \ar[d] &  
				\\
				& \sM_{t_i}  & \sM_{(t_1, t_{i + 1})} & \sM_{t_{i+1}}
		}}
	\end{equation}

	Note that the \emph{a priori} rational maps $Z_s \to Z_{t_i}$ given by taking the canonical model are actually morphisms. Indeed, since $(Z_{s}, \bfv(t_i)\Delta_{s})$ and $(Z_{s_i}, \bfv(t_{i+1})\Delta_{s})$ are good minimal models, so they admit morphisms to their canonical models. The idea now is to descend these morphisms to the universal families and use them to induce the flip-like morphisms $\sM_{t_i} \leftarrow \sM_{(t_i, t_{i + 1})} \rightarrow \sM_{t_{i + 1}}$. 
	
	\begin{theorem}[Flip-like morphisms]\label{theorem:flip-like:morphisms}In the setting of Theorem \ref{thm:morphismextends}, consider $t_i < t_{i + 1}$ consecutive $(\bfa \to \bfb)$-walls. There is a commutative diagram 
		\begin{equation}
			\xymatrix{
				\calX_{t_i} \ar[d] & \ar[l] \ar[r] \ar[d] \calX_{(t_i, t_{i+1})} & \calX_{t_{i + 1}} \ar[d] \\
				\sM_{t_i} & \sM_{(t_i, t_{i + 1})} \ar[r]^{\beta_{t_{i + 1}}} \ar[l]_{\alpha_{t_i}} & \sM_{t_{i+1}} 
			}
		\end{equation} 
		which commutes with diagram \ref{eq:diag_to_be_filled}. Moreover, we have
		\begin{enumerate}[(i)]
			\item $\alpha_{t_i}$ is induced by taking a pair $(X, \bfv(t_i)\Delta)$ to $\operatorname{Proj}R(K_X + \bfv(t_i)\Delta)$ with the pushfoward divisor,
			\item $\beta_{t_{i+1}}$ is induced by taking a pair $(X, \bfv(t_{i + 1})\Delta)$ to $\operatorname{Proj} R(K_X + \bfv(t_{i+1})\Delta)$ with the pushforward divisor. 
		\end{enumerate}
		
		In particular, over the dense open subset $\calU \subset \sM_{(t_i,t_{i+1})}$ parametrizing klt pairs, $\alpha_{t_i}$ and $\beta_{t_{i+1}}$ can be described by taking fiberwise canonical models.

	\end{theorem}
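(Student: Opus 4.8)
The plan is to realize $\alpha_{t_i}$ and $\beta_{t_{i+1}}$ as the moduli maps induced by the relative stable models of the chamber family at the two wall coefficients, and then to check that they factor through $\sM_{t_i}$ and $\sM_{t_{i+1}}$. Starting from the universal family $(\calX_{(t_i,t_{i+1})}, \bfv(s)\calD_{(t_i,t_{i+1})}) \to \sM_{(t_i,t_{i+1})}$ of Corollary \ref{cor:finiteness}, I would instead equip it with the wall coefficients $\bfv(t_i)$ and $\bfv(t_{i+1})$. On the fibers the divisor $K + \bfv(t_i)\calD$ (resp. $K + \bfv(t_{i+1})\calD$) is then only nef and big, and the aim is to form the relative $\operatorname{Proj}$ of the associated log canonical ring and prove that it is a stable family of $\bfv(t_i)$-weighted (resp. $\bfv(t_{i+1})$-weighted) pairs.

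First I would reduce the whole construction to the case of a DVR base, exactly as in the proof of Corollary \ref{cor:finiteness}. Forming the relative stable model and testing stability are \'etale-local and may be checked over spectra of DVRs; since the map $B \to \sM_{(t_i,t_{i+1})}$ is proper and surjective, the valuative criterion of properness together with a Riemann--Hurwitz argument permitting ramified extensions of DVRs lets me assume that any test $T = \Spec R \to \sM_{(t_i,t_{i+1})}$ factors through $B$. Over $B$ the chamber family pulls back to $(Z_s, \bfv(s)\Delta_s)$, and by Remark \ref{remark:consequences:second:bullett:thm:4.4} the pairs $(Z_{t_i}, \bfv(t_i)\Delta_{t_i})$ and $(Z_{t_{i+1}}, \bfv(t_{i+1})\Delta_{t_{i+1}})$ are precisely the canonical models of $(Z_s, \bfv(t_i)\Delta_s)$ and $(Z_s, \bfv(t_{i+1})\Delta_s)$ over $B$, in the sense of Notation \ref{notation:canonical:model} (passing to a small dlt modification when the wall divisor is not $\mathbb{Q}$-Cartier). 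This identifies the relative stable models over $B$ with the stable families $(Z_{t_i}, \bfv(t_i)\Delta_{t_i})$ and $(Z_{t_{i+1}}, \bfv(t_{i+1})\Delta_{t_{i+1}})$ of Theorem \ref{thm:morphismextends}.

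The crucial point is then to check that over each such DVR the formation of the relative stable model commutes with restriction to the closed fiber, so that these really glue to stable families over the seminormal base $\sM_{(t_i,t_{i+1})}$. Here I would use that the total space of the one-parameter degeneration is klt---the generic fiber is klt and the wall coefficients satisfy $\bfv(t_i), \bfv(t_{i+1}) \le \bfa$---so that Lemma \ref{lemma:bpf:thm} applies and the restriction of the relative ample model to the central fiber computes the stable model of the slc central fiber, which is $S_2$ by Alexeev's theorem \cite[Theorem 0.1]{AlexeevLimit}; this is exactly the mechanism recorded in the remark following Definition \ref{def:stable:model}. Granting the commutation, the relative stable models assemble into stable families $(\calX_{t_i}, \bfv(t_i)\calD_{t_i})$ and $(\calX_{t_{i+1}}, \bfv(t_{i+1})\calD_{t_{i+1}})$ over $\sM_{(t_i,t_{i+1})}$, which by the moduli property of Theorem \ref{thm:thm4.79} induce morphisms $\alpha_{t_i} : \sM_{(t_i,t_{i+1})} \to \sK_{t_i}$ and $\beta_{t_{i+1}} : \sM_{(t_i,t_{i+1})} \to \sK_{t_{i+1}}$.

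Finally I would check that these land in $\sM_{t_i}$ and $\sM_{t_{i+1}}$ and have the stated descriptions. The composite $B \to \sM_{(t_i,t_{i+1})} \xrightarrow{\alpha_{t_i}} \sK_{t_i}$ equals $\Phi_{t_i}$ by the identification over $B$ of the previous step; since $B \to \sM_{(t_i,t_{i+1})}$ is surjective, the image of $\alpha_{t_i}$ coincides with that of $\Phi_{t_i}$, and as the source is seminormal the universal property of seminormalization makes $\alpha_{t_i}$ factor through $\sM_{t_i}$ (and symmetrically for $\beta_{t_{i+1}}$). Compatibility with diagram \ref{eq:diag_to_be_filled} is immediate from this same identification. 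Over the dense klt open $\calU \subset \sM_{(t_i,t_{i+1})}$ the relative stable model is computed fiberwise by \cite[Theorem 4.2]{hacon2013birational}, so $\alpha_{t_i}$ and $\beta_{t_{i+1}}$ are given there by $(X, \bfv(t_i)\Delta) \mapsto \operatorname{Proj}R(K_X + \bfv(t_i)\Delta)$ and $(X, \bfv(t_{i+1})\Delta) \mapsto \operatorname{Proj}R(K_X + \bfv(t_{i+1})\Delta)$ with the pushforward divisor, giving (i), (ii) and the fiberwise description. The main obstacle is the commutation-with-base-change step of the previous paragraph: away from $\calU$ the fibers are genuinely slc, fiberwise canonical models need not commute with specialization, and it is precisely the klt-ness of the total space of the degeneration together with Kawamata--Viehweg vanishing (Lemma \ref{lemma:bpf:thm}) and Alexeev's $S_2$ theorem that make the construction descend to a stable family over the whole seminormal base.
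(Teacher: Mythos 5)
Your construction of the underlying family of varieties is essentially the paper's: form the relative $\operatorname{Proj}$ of the log canonical ring of $(\calX,\bfv(t)\calD)$ over $\sM_{(t_i,t_{i+1})}$, and use that every fiber is a one-parameter degeneration of a klt pair so that relative Kawamata--Viehweg vanishing (as in Lemma \ref{lemma:bpf:thm}) plus cohomology and base change shows the $\operatorname{Proj}$ commutes with restriction to fibers, whence $h:\calX\to\calY$ is a morphism computing fiberwise stable models. That part of your argument is sound and matches the paper.

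However, there is a genuine gap: a point of $\sK_{\bfv(t)}$ is a \emph{marked} stable pair, so to get the morphisms $\alpha_{t_i}$, $\beta_{t_{i+1}}$ you must produce on $\calY$ an $n$-marked family of divisors $\calD_\calY^{(i)}=h_*\calD^{(i)}$ satisfying all the conditions of Definition \ref{def:family:of:pairs} over the merely seminormal base $\sM$, and your proposal never addresses this. This is the bulk of the paper's proof. Commutation of the $\operatorname{Proj}$ with base change gives you $\calY\to\sM$ but says nothing about whether $\Supp(h_*\calD^{(i)})\to\sM$ is equidimensional, whether the fibers are smooth at the generic points of the divisor supports, or --- most delicately --- whether the gluing condition of \cite[Corollary 4.5]{kollarmodulibook} holds, i.e.\ that fibers over points of an \'etale cover identified under normalization carry the \emph{same} pushed-forward divisor. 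The paper handles all of these via the Claim that $\Supp(f_*D^{(i)})_u=\Supp((f_u)_*D^{(i)}_u)$ for every $u$, proved by a semicontinuity argument on fiber dimensions of $D\to f(D)$ and $f(D)\to U$; without some such statement the pushforward could fail to be a family of divisors (e.g.\ a component could be contracted on special fibers but not generically). Note also that this last condition (d) compares distinct points with the same image in the normalization and therefore cannot be verified by your blanket reduction to DVR bases; the DVR reduction in Corollary \ref{cor:finiteness} works because local stability is by definition a DVR-testable condition, but being a well-defined family of marked divisors over a seminormal base is not.
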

	
	\begin{proof} We will denote $(\calX_{(t_i, t_{i+1})}, \calD_{(t_i, t_{i + 1})}) \to \sM_{(t_i, t_{i+1})}$ by $(\calX, \calD) \to \sM$ for convenience. 
		
		We need to construct a well-defined family  of pairs over $\sM$ but with coefficients $\mathbf{v}(t_i)$, that pulls back to $(Z_{t_i}, \mathbf{v}(t_i) \Delta_{t_i}) $ over $B$ and similarly for coefficients $\bfv(t_{i + 1})$.
		In particular, we have to show that the canonical model map 
		\begin{equation}\label{eqn:lc}
			(Z_s, \bfv(t)\Delta_s) \dashrightarrow (Z_t, \bfv(t)\Delta_t)
		\end{equation}
		is a morphism which is pulled back from a morphism of families of $\sM$ for any $s \in (t_i, t_{i + 1})$ and $t = t_i, t_{i + 1}$. Note that (\ref{eqn:lc}) is in fact a morphism for $t = t_i, t_{i + 1}$ by the basepoint free theorem applied to $K_{Z_s} + \bfv(t)\Delta_t$. By construction, $(Z_s, \Delta_s)$ is the pull-back of the universal pair $(\calX, \calD) \to \sM$. 
		
		Our task is to descend the stable family $(Z_t, \bfv(t)\Delta_t) \to B$ to a stable family $(\calY, \bfv(t)\calD_\calY) \to \sM$ along with a log canonical linear series $\calX \dashrightarrow \calY$ whose construction is compatible with basechange.  
		
		By \ref{cor:finiteness}, we know that for every $s \in (t_i, t_{i + 1})$ the family $(\calX,\mathbf{v}(s)\calD) \to \sM$ is stable.
		Therefore, since both having lc singularities and being nef are  closed conditions on the coefficients of the divisor, the morphism $\pi:(\calX,\mathbf{v}(t)\calD) \to \sM$ is locally stable, and  $K_{\calX/\sM}+\mathbf{v}(t)\calD$ is $\pi$-nef. We define, for $d$ divisible enough, 
		$$
		\calY:=\Proj_\sM\left(\bigoplus_{m \in \mathbb{N}}(\pi_*\calO_{\calX}(md(K_{\calX/\sM}+\mathbf{v}(t)\calD))\right).
		$$
		That is, $\calY$ is the relative canonical model of $(\calX, \bfv(t)\Delta)$. 
		
		We claim that the construction of $\calX \dashrightarrow \calY$ commutes with base change. By cohomology and base change it suffices to prove that for $d$ and $m$ divisible enough, and for every $p \in \sM$, we have
		$$H^1\left(\calX_p,md\left(K_{\calX_p}+\mathbf{v}(t)\calD_p\right)\right)=0.$$
		Recall that from the definition of $\sM$, every pair appearing as a fiber of $\pi$ can be obtained as the degeneration of a klt pair over a DVR and moreover that $K_{\calX_p} + \bfv(t)\calD_p$ is big and nef. Now the desired vanishing follows from relative Kawamata--Viehweg vanishing as in \cite[Theorem 8.1]{Inc18}. 
		
		In particular, for every $p \in \sM$, 
		\begin{equation*}
			\calY_p = \Proj\left(\bigoplus_{n \in \mathbb{N}}\left(H^0\left(\calX_p,nd(K_{\calX_p}+\mathbf{v}(t_i)\calD_p\right)\right)\right).
		\end{equation*}
		is the canonical model of $(\calX_p, \bfv(t)\calD_p)$. We conclude moreover that the rational map $h: \calX \dashrightarrow \calY$ is in fact a morphism as it basechanges to the morphism (\ref{eqn:lc}) via the surjective map $B \to \sM$, and that $h$ induces the fiberwise canonical model for each $p \in \sM$.

		We now need to produce a family of divisors $\calD_\calY$ (see Section \ref{section:moduli:of:stable:pairs}). On $\calX$ we have $n$ well-defined families of divisors $\left(\calX,\calD^{(i)}\right) \to \sM$ and a universal pair $\left(\calX,\sum (s a_i + (1-s)b_i) \calD^{(i)}\right) \to \sM$. 
		We wish to define $\calD_\calY^{(i)}$ as the pushforward  to get $\calD^{(i)}_\calY:=h_*\left(\calD^{(i)}\right)$ to $\calY$. 
		We need to check that $\left(\calY,\calD^{(i)}_\calY\right)$ is a well-defined family of divisors for each $i$
		\cite[Definition 4.2]{kollarmodulibook} (see also Section \ref{section:moduli:of:stable:pairs}), that is,
		\begin{enumerate}[(a)]
			\item $\calY \to \sM$ is flat with deminormal fibers;
			\item for every $i$, $\Supp\left(\calD^{(i)}_{\calY}\right) \to \sM$ is equidimensional of dimension $n-1$;
			\item for every $p \in \sM$, the fiber $\calY_p$ is smooth at the generic point of $\Supp\left(\calD^{(i)}_{\calY}\right)$, and
			\item the assumptions of \cite[Corollary 4.5 (2)]{kollarmodulibook} (which we recall below) apply.
		\end{enumerate}
		
		Each of the statements (a), $\ldots$, (d) can be checked \'etale locally, so we can pull back to an \'etale cover $U \to \sM$ by a scheme. We denote by $f : X \to Y$ the pull-back of $h$ and let $D, D^{(i)}$ and $D_Y^{(i)}$ be the divisorial pull-backs of $\calD$, $\calD^{(i)}$ and $\calD_Y^{(i)}$ respectively as defined in Section \ref{section:moduli:of:stable:pairs}. Note moreover that $f_*D^{(i)} = D_Y^{(i)}$ as taking divisorial part and scheme theoretic image both commute with \'etale base change. The situation is summarized in the following diagram. 
		
		$$
		\xymatrix{ \Supp\left(D^{(i)}\right) \ar[r] &X \ar[r] \ar[d]_f & \calX \ar[d]^h \\ \Supp\left(D_Y^{(i)}\right)\ar[r]&Y \ar[r] \ar[d]_g & \calY \ar[d] \\& U \ar[r] & \sM}
		$$
		
		Recall now that for every $u \in U$, we have:
		\begin{itemize}
			\item The morphism $f_u:X_u \to Y_u$ is the stable model of $$\left(X_u, \sum_j (t_ia_j + (1-t_i)b_j)D_u^{(j)}\right)$$, and
			\item $\Supp \left(D^{(i)}\right)_u=\Supp\left(D_u^{(i)}\right)$ since $\left(X,D^{(i)}\right) \to U$ is a family of pairs.
		\end{itemize}
		In particular, for each $u \in U$, the fiber $Y_u$ comes with $n$ divisors (namely $(f_u)_*(D^{(i)}_u)$). \\
		
		\noindent \textbf{Claim:} For each $u \in U$, we have
		$$
		\Supp\left(f_*D^{(i)}\right)_u\eqdef \Supp\left((f_u)_*D^{(i)}_u\right).
		$$
		
		We thank the referee for suggesting a more elegant proof of this claim. \\
		
		\noindent\emph{Proof of claim:} To prove the claim, we may replace $D^{(i)}$ with an irreducible component, and since we work on one $i$ at a time, we simply denote it by $D$. The key observation is that $D\to f(D)$ is generically finite if and only if $f(D) \to U$ is equidimensional of relative dimension $\dim D_u$. Indeed, the dimensions of the fibers of both $f(D)\to U$ and $D\to f(D)$ are both upper semicontinuous, and the sum
		$$
		\dim f(D)_u + (\dim D_u - \dim f(D)_u) = \dim D_u
		$$
		is constant by assumption. Thus $f(D) \to U$ is equidimensional and $$\dim D_u - \dim f(D)_u$$ is constant. Note that $f$ is also fiberwise birational for general $u \in U$, so we conclude that $D \to f(D)$ is generically finite if and only if $f_*D = f(D)$, if and only if $(f_u)_*(D_u) = f_u(D_u)$ for general $u \in U$, if and only if $\Supp((f_u)_*(D_u)) = f_u(D_u)$ for all $u \in U$. In this case, it follows that 
		$$
		\Supp\left((f_u)_*D^{(i)}_u\right) = f_u(D^{(i)}_u) = f(D^{(i)})_u = \Supp\left(f_*D^{(i)}\right)_u. 
		$$
		Otherwise, $f_*D = 0$, and so $(f_u)_*(D_u) = 0$ for all $u \in U$ as well. In either case, the claim holds.

		Now we are ready to check conditions (a), $\ldots$, (d).  For (a), it suffices to check the conditions after pull back along all morphisms $\xi:\Spec(R) \to \sM$ from the spectrum of a DVR by the valuative criterion for flatness (see also e.g. \cite[Lemma 10.58]{kollarmodulibook}). Now the construction of $\calY \to \sM$ via the relative Proj commutes with basechange so the pull-back is the canonical model of a locally stable family over a DVR which is flat with deminormal fibers by the construction of stable limits.

		Next, (b) and (c) are properties of the fibers over points, so can be checked for each $u \in U$. Thus, they follow from the claim that
		$$
		\Supp(f_*D^{(i)})_u = \Supp((f_u)_*D_u^{(i)})
		$$
		and the fact that $\left(Y_u,\sum_j (t_ia_j+(1-t_i)b_j)(f_u)_*D_u^{(j)}\right)$ is a stable pair. 
		
		We now show (d), using \cite[Corollary 4.5]{kollarmodulibook}.
		With the notations of the previous paragraphs, we need to show the following.
		Consider $\nu:U^\nu \to U$ the normalization, and let $X_n$ (resp. $D_n^{(i)}$, $Y_n$ and $f_n$) be the pull-back of $X$
		(resp. $D^{(i)}$, $Y$ and $f$). Then $(Y_n,(f_n)_*(D_n^{(i)}))$ is a well-defined family of pairs from \cite[Theorems 4.3 \& 4.4]{kollarmodulibook}. We need to show that for every two points  $u,v \in U$ with
		$\nu(u)=\nu(v)$, we have $(Y_{u},((f_n)_*(D_n^{(i)}))_{u})=(Y_{v},((f_n)_*(D_n^{(i)}))_{v})$.
		But from the claim above, we know that $\Supp((f_n)_*(D_n^{(i)}))_u$ is the support of the push forward of $(D_n^{(i)})_u$, via the map that takes the stable model of $(X_u,\sum_j (t_ia_j + (1-t_i)b_j) (D^{(i)}_n)_u)$. In particular, it is uniquely determined by $(X_u,\sum_j (t_ia_j + (1-t_i)b_j) (D^{(i)}_n)_u)$.
		But since $\nu(s)=\nu(t)$ and the family $(X_n,\sum_j (t_ia_j + (1-t_i)b_j)D_n^{(j)})$ is pulled back via $\nu$, we have 
		$$(X_u,\sum_j (t_ia_j + (1-t_i)b_j) (D^{(i)}_n)_u)=(X_v,\sum_j (t_ia_j + (1-t_i)b_j) (D^{(i)}_n)_v).$$
		
		Putting this together, we conclude that $\sM$ carries a canonical $\bfv(t)$-weighted stable family \\ $(\calY, h_*\bfv(t)\calD)$ which induces the required morphism $\sM \to \sM_t$.
		
		Finally, as we showed above, the formation of the log canonical morphism 
		$$
		f : X \to \operatorname{Proj}_B \ R(X/B, K_{X/B} + \bfv(t)D)
		$$ of the log canonical ring commutes with basechange for all $\bfv(s)$-weighted stable families parametrized by $\sM$ and similarly the formation of the Weil divisor $\mathrm{Weil}(f_*D)$ also commutes. Therefore, the resulting morphism $\sM \to \sM_t$ can be described pointwise as taking a point $p$ corresponding to the stable pair $(X_p, \bfv(s)D_p)$ to the point of $\sM_t$ classifying the stable pair $$(\operatorname{Proj} R(K_{X_p} + \bfv(t)D_p), f_*\bfv(t)D).$$ In particular, over the locus where $X$ is normal, the morphism is induced by taking the fiberwise canonical model. \end{proof} 
	
	\begin{remark} If $(X,D) \to B$ is a locally stable family of pairs with $B$ smooth, then the canonical model over $B$ is a stable family by \cite[Corollary 4.57]{kollarmodulibook}. The main difficulty in the above Theorem then is descending the conditions on a stable family along the non-smooth morphism $B \to \sM$.
	\end{remark} 
	
	The following Corollary will be useful in the proof of Theorem \ref{thm:reduction:morphism:birational:for:ZMT}.
	\begin{cor}
		\label{cor:reduction:morphisms:are:onto}
		Following the notation of Theorem \ref{theorem:flip-like:morphisms}, the morphisms $\beta_{t_{i+1}}$ and $\alpha_{t_{i}}$ are surjective.
	\end{cor}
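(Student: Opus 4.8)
The plan is to deduce both surjectivity statements directly from the commutativity built into Theorem \ref{theorem:flip-like:morphisms}, together with the fact that $B$ surjects onto each of the moduli spaces $\sM_t$. I will treat $\alpha_{t_i}$ in detail; the argument for $\beta_{t_{i+1}}$ is identical after replacing $t_i$ with $t_{i+1}$ and $\alpha$ with $\beta$.

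First I would record that the classifying map $\Phi_{t_i} : B \to \sM_{t_i}$ is surjective on points. Indeed, by Definition \ref{Def:calM}, $\sM_{t_i}$ is the seminormalization of the image of $\Phi_{t_i} : B \to \sK_{\bfv(t_i)}$, and since $B$ is proper (Remark \ref{rem:Mt:proper}) this image is already closed. Because $B$ is smooth, hence seminormal, the map to the image factors through its seminormalization, and as seminormalization is a universal homeomorphism the resulting morphism $B \to \sM_{t_i}$ is surjective. The same reasoning applies to $\Phi_s : B \to \sM_{(t_i, t_{i+1})}$ for any $s \in (t_i, t_{i+1})$.

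Next I would invoke the commutativity asserted in Theorem \ref{theorem:flip-like:morphisms}, namely that the diagram there commutes with diagram \ref{eq:diag_to_be_filled}, which contains the maps $\Phi_s$ and $\Phi_{t_i}$ from $B$. Concretely, $\alpha_{t_i}$ is the morphism classifying the family $(\calY, h_*(\bfv(t_i)\calD)) \to \sM_{(t_i, t_{i+1})}$ produced in the proof of that theorem, and the pullback of this family along $\Phi_s$ is precisely the family $(Z_{t_i}, \bfv(t_i)\Delta_{t_i}) \to B$ that induces $\Phi_{t_i}$. Since $\sK_{\bfv(t_i)}$ is a Deligne--Mumford stack, a morphism into it is determined by the family it classifies, so $\alpha_{t_i} \circ \Phi_s = \Phi_{t_i}$. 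Combining this with the previous step yields
$$
\sM_{t_i} = \Phi_{t_i}(B) = \alpha_{t_i}\bigl(\Phi_s(B)\bigr) \subseteq \alpha_{t_i}\bigl(\sM_{(t_i, t_{i+1})}\bigr),
$$
so $\alpha_{t_i}$ is surjective, and symmetrically for $\beta_{t_{i+1}}$.

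The only delicate points, rather than any genuine obstacle, are the two inputs above: that $\Phi_{t_i} : B \to \sM_{t_i}$ is actually surjective (so the containment is an equality), which relies on $B$ being proper and on seminormalization being a universal homeomorphism; and that the relevant triangle of maps out of $B$ genuinely commutes, which is exactly the compatibility with diagram \ref{eq:diag_to_be_filled} already incorporated into the statement of Theorem \ref{theorem:flip-like:morphisms}. Given these, the corollary is essentially formal.
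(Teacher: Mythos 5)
Your proof is correct and follows essentially the same route as the paper's: the paper likewise reduces to the surjectivity of the classifying map from the proper smooth base $B$ onto $\sM_{(t_i,t_{i+1})}$ and identifies the composite $\alpha_{t_i}\circ \Phi_s$ with $\Phi_{t_i}$ (there via Proposition \ref{prop:MMP:with:scaling:Jakub} identifying the relative canonical models over $B$, which is the same compatibility you extract from the commutativity with diagram \ref{eq:diag_to_be_filled}). The only cosmetic difference is the order of the two steps, so no further comment is needed.
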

	\begin{proof}We prove the desired statement for $\alpha_{t_i}$, the case of $\beta_{t_{i+1}}$ is analogous. 
		
		From Theorem \ref{thm:morphismextends} and Definition \ref{Def:calM}, we have a surjective morphism $p:B \to \sM_{(t_i, t_{i+1})}$ with $B$ a smooth projective variety,
		induced by the family $(Z_{s},\mathbf{v}(s)\Delta_{s})\to B$ for any $s \in (t_i, t_{i + 1})$. Then to show that $\alpha_{t_i}$ is surjective, it suffices to show that $\alpha_{t_i} \circ p$ is surjective.
		
		The composition $B \to \sM_{(t_i, t_{i+1})} \xrightarrow{\alpha_{t_i}} \sM_{t_i}$ is induced
		by taking the canonical model of the pair $(Z_{s},\mathbf{v}(t_i)\Delta_{s})$ over $B$, which from Proposition \ref{prop:MMP:with:scaling:Jakub} agrees with $(Z_{t_i},\mathbf{v}(t_i)\Delta_{t_i})$. Now, the desired statement follows from the definition of $\sM_{t}$.
	\end{proof}
	
	Finally, we end the section with a discussion of the name ``flip-like morphisms''.
	\begin{notation}
		When working around a single $(\bfa \to \bfb)$-wall $t_i$, we will denote $\sM_{(t_{i-1}, t_i)}$ (resp. $\sM_{(t_i,t_{i + 1})}$) by $\sM_{t_i - \varepsilon}$ (resp. $\sM_{t_i + \varepsilon})$.
	\end{notation}
	Theorem \ref{theorem:flip-like:morphisms} guarantees the existence of maps $\calX_{t_i - \varepsilon} \to \calX_{t_i} \leftarrow \calX_{t_i+ \varepsilon}$
	of universal families. These universal families lie over different moduli spaces. However, we can pull back the above diagram to the fiber product $\mathfrak{F} := \sM_{t_i - \varepsilon} \times_{\sM_{t_i}} \sM_{t_i + \varepsilon}$ to obtain a diagram
	$$
	\xymatrix{\calZ_{t_i - \varepsilon} \ar[rd] \ar[rdd] &  & \calZ_{t_i + \varepsilon} \ar[ld] \ar[ldd] \\ &\calZ_{t_i} \ar[d] & \\ &\mathfrak{F}&}
	$$
	which one can think of as a sort of \emph{universal generalized log flip} (see \cite[Proposition 8.4]{AB17} and the preceding discussion). Indeed, pulling back this diagram along the natural morphism $B \to \mathfrak{F}$ yields a generalized log flip over $B$.
	Here we say \emph{generalized} to emphasize the fact that the log canonical contraction $Z_{t_i + \varepsilon} \to Z_{t_i}$ can be the contraction of a higher dimensional extremal face and thus can contract both divisorial and higher codimension exceptional loci. 
	
	Theorem \ref{theorem:flip-like:morphisms} can be summarized then by saying that this universal generalized log flip induces flip-like morphisms
	$\sM_{t_i - \varepsilon} \xrightarrow{\beta_{t_i}} \sM_{t_i} \xleftarrow{\alpha_{t_i}} \sM_{t_i + \varepsilon}$. In the following sections, we will see that $\beta_{t_i}$ is in fact an isomorphism after passing to the normalizations of the moduli spaces.
	In the following sections, we will see that $\beta_{t_i}$ is in fact an isomorphism after passing to the normalizations of the moduli spaces. 
	
	\section{Quasi-finiteness of the flip-like morphism below a wall}\label{Section:finite} 
	
	The goal of this section is to prove that for any $(\bfa \to \bfb)$-wall $t_i$, the flip-like morphism $\beta_{t_i} : \sM_{t_i - \varepsilon} \to \sM_{t_i}$ of Theorem \ref{theorem:flip-like:morphisms} is quasi-finite.  
	
	\begin{theorem}\label{thm:finite}
		The morphism $\beta_{t_i}$ is quasi-finite.
	\end{theorem}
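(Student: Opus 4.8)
The plan is to establish quasi-finiteness by bounding the set-theoretic fibers of $\beta_{t_i}$ directly: I will show that over a point $y \in \sM_{t_i}$ classifying a stable pair $(Y, \bfv(t_i)\Delta_Y)$, the source pair is reconstructed from $(Y, \bfv(t_i)\Delta_Y)$ up to finite ambiguity, so that $\beta_{t_i}^{-1}(y)$ is finite (indeed a single point on the locus where the reconstruction is unambiguous). By Theorem \ref{theorem:flip-like:morphisms}, a point $x \in \beta_{t_i}^{-1}(y)$ is a $\bfv(s)$-weighted stable pair $(X, \bfv(s)\Delta_X)$, for $s \in (t_{i-1}, t_i)$, together with its level-$t_i$ canonical model morphism $g : X \to Y$ satisfying $g_*\Delta_X = \Delta_Y$. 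Thus the task is to recover $X$, and its marking, intrinsically from $(Y, \bfv(t_i)\Delta_Y)$.

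Write $\Delta' := \sum_j (a_j - b_j)\Delta_j$ and let $\Delta'_X, \Delta'_Y$ denote the corresponding divisors on $X$ and $Y$. Since $Y$ is the canonical model at level $t_i$, the morphism $g$ is crepant, $K_X + \bfv(t_i)\Delta_X = g^*(K_Y + \bfv(t_i)\Delta_Y)$, and it is small by Proposition \ref{prop:MMP:with:scaling:Jakub} (applied to the relevant klt total space, as explained below). Combining crepancy with the identity $K_X + \bfv(s)\Delta_X = g^*(K_Y + \bfv(t_i)\Delta_Y) - (t_i - s)\Delta'_X$ and the ampleness of $K_X + \bfv(s)\Delta_X$, I obtain that $-\Delta'_X$ is $g$-ample. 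As $g$ is small, it follows that
$$ X \;\cong\; \Proj_Y \bigoplus_{m \ge 0} g_*\calO_X(-m\Delta'_X), $$
and, again by smallness, the graded $\calO_Y$-algebra on the right equals $\bigoplus_{m \ge 0}\calO_Y(-m\Delta'_Y)$, which depends only on $(Y, \bfv(t_i)\Delta_Y)$ and the fixed weights $\bfa, \bfb$. Hence $X$, and then $\Delta_X = g^{-1}_*\Delta_Y$, are determined by $y$, so over the klt locus the fiber is a single point.

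The main obstacle is the boundary locus where $(Y, \bfv(t_i)\Delta_Y)$, and therefore $(X, \bfv(s)\Delta_X)$, is only slc — possibly reducible and non-normal — since there neither the smallness of $g$ nor the reconstruction above is available directly, as the MMP and finite generation can fail for slc pairs. To handle this I would use the guiding principle that such boundary pairs are limits of klt pairs: by the construction of $\sM_{t_i - \varepsilon}$ (Definition \ref{Def:calM}), the point $x$ lies on a component with klt generic point, so $(X, \bfv(s)\Delta_X)$ fits into a one-parameter family $(\calX_R, \bfv(s)\calD_R) \to \Spec R$ over a DVR with klt total space and klt generic fiber. Running the MMP with scaling on this klt total space as in Subsection \ref{subsection:mmpscaling}, Proposition \ref{prop:MMP:with:scaling:Jakub} shows the relative level-$t_i$ canonical model $\calX_R \to \calY_R$ is small, and Lemma \ref{lemma:bpf:thm} guarantees that it restricts on the central fiber to $g : X \to Y$. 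The reconstruction $\calX_R \cong \Proj_{\calY_R}\bigoplus_m (g_R)_*\calO_{\calX_R}(-m\Delta'_{\calX_R})$ then takes place over the klt total space, and the central fiber $X$ is recovered intrinsically from $(Y, \bfv(t_i)\Delta_Y)$ provided the formation of this relative $\Proj$ commutes with restriction to the closed fiber. This last compatibility is exactly the technical heart: it requires controlling the divisorial sheaves $\calO_Y(-m\Delta'_Y)$ and their base change when $Y$ is non-normal, which should follow from the Kawamata--Viehweg vanishing underlying Lemma \ref{lemma:bpf:thm} together with the cohomology-and-base-change argument used in the proof of Theorem \ref{theorem:flip-like:morphisms}. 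I expect this descent of the reconstruction across the non-normal boundary to be the crux of the proof.
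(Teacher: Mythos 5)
Your argument on the klt locus is essentially correct and matches what the paper does elsewhere (it is the content of Proposition \ref{prop:MMP:with:scaling:Jakub} and the generic-injectivity part of Theorem \ref{thm:reduction:morphism:birational:for:ZMT}): there $g$ is small, $-\Delta'_X$ is $g$-ample, and the source is recovered as a relative $\Proj$ over the target. The gap is in your treatment of the boundary, and it is not merely technical: the uniqueness statement you are trying to descend to the slc locus is \emph{false} there. The paper's own example in Section \ref{sec:normalization} exhibits two distinct points of $\sM_{\bfa - \varepsilon}$ (stable limits of $\mathbb{P}^1\times\mathbb{P}^1$ and of $\mathbb{P}^2$ degenerating to the cone $S$, with \emph{different} volumes) that map to the same point of $\sM_{\bfa}$ under $\beta$. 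So no reconstruction of the source from the central fiber $(Y, \bfv(t_i)\Delta_Y)$ alone can succeed: your relative $\Proj$ over a one-parameter smoothing depends on the total space $\calY_R$ of the chosen smoothing, and the divisorial sheaves $\calO_{\calY_R}(-m\Delta')$ do not restrict to data determined by the non-normal central fiber. The "compatibility with restriction to the closed fiber" you hope for would force the fiber of $\beta_{t_i}$ to be a single point, contradicting the example.

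What is actually needed on the boundary is a \emph{counting} argument rather than a uniqueness argument, and this is how the paper proceeds. First (Lemma \ref{lemma:the:reduction:morphisms:contract:curves:in:D}) one shows, using a klt smoothing, that the map $h$ from a source pair to the target is crepant with exceptional locus inside the marked divisor. Then (Lemma \ref{Lemma:red:lc:case}) one reduces to normal pairs via Koll\'ar's gluing theory: a stable pair is determined by its normalization together with an involution on the different, and there are only finitely many such gluings by \cite[Theorem 5.13]{Kollarsingmmp} and \cite[Proposition 5.5]{KP17}. Finally (Proposition \ref{prop:lc:case:finite}), by Matsusaka--Mumford the normalized source is determined by the target, the coefficient set, and the finitely many divisors it extracts, all of which have negative discrepancy; there are only countably many divisors of negative discrepancy over a fixed lc pair (Lemma \ref{lemma:negative:discrepancies:logsmooth:pair}). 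This gives countable fibers, and finiteness then follows because $\beta_{t_i}$ is of finite type. You would need to replace the final paragraph of your proposal with an argument of this kind.
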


	To prove Theorem \ref{thm:finite}, we consider the following situation. Let $q \in \sM_{t_i}$ be a point corresponding to a stable pair $(X, \bfv(t_i)D_X)$. Each point $p \in \beta_{t_i}^{-1}(q)$ corresponds to a $\bfv(t_{i} - \varepsilon)$-weighted stable pair $(Y, \bfv(t_i - \varepsilon)D_Y)$. We need to show that there are finitely many such $Y$ given a fixed $(X, \bfv(t_i)D_X)$. To do this, we need to understand how the different models $(Y, \bfv(t_i - \varepsilon)D_Y)$ are related to $(X,  \bfv(t_i)D_X)$. This is accomplished by the following lemma. 
	
	\begin{lemma}\label{lemma:the:reduction:morphisms:contract:curves:in:D}
		Let $(Y,\mathbf{v}(t_i-\varepsilon)D)$ be an slc pair corresponding to  $p \in \sM_{t_i-\varepsilon}(k)$, and
		let the pair $(X,\mathbf{v}(t_i)D_X)$ be the image  $\beta_{t_i}(p)$. Then there is a morphism $h: Y \to X$ with the following properties:
		\begin{enumerate}
			\item A curve $C$ gets contracted by $h$ if and only if $(K_Y+\mathbf{v}(t_i)D) \cdot C=0$,
			\item $h$ has connected fibers,
			\item $\operatorname{Exc}(h) \subseteq \Supp(\mathbf{v}(t_i)D)$, in particular $h$ does not contract any component of $Y$, and
			\item $h^*(K_X+\mathbf{v}(t_i)D_X)=K_Y+\mathbf{v}(t_i)D.$
		\end{enumerate}
	\end{lemma}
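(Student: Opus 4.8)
The plan is to take $h\colon Y\to X$ to be the log canonical contraction already produced by Theorem \ref{theorem:flip-like:morphisms}. By that theorem $X=\Proj R(K_Y+\bfv(t_i)D)$ and $D_X=h_*D$, and its proof (via relative Kawamata--Viehweg vanishing, cf. Lemma \ref{lemma:bpf:thm}) shows that $L:=K_Y+\bfv(t_i)D$ is semiample on $Y$ with $h$ the associated morphism to its ample model. By the tautological property of $\Proj$ of a section ring, $L=h^*A$ for an ample $\mathbb{Q}$-divisor $A$ on $X$. This already gives (1): for a curve $C\subset Y$ the projection formula yields $L\cdot C=h^*A\cdot C=A\cdot h_*C$, and since $A$ is ample this vanishes precisely when $h_*C=0$, i.e. exactly when $C$ is contracted by $h$.

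Next I would record that $L$ is big on every irreducible component of $Y$. Pulling back along the normalization $\nu\colon Y^\nu\to Y$ and using Definition \ref{def:slc}, the divisor $\nu^*(K_Y+\bfv(t_i-\varepsilon)D)=K_{Y^\nu}+\bfv(t_i-\varepsilon)\nu^*D+\bar\Gamma$ is the finite pullback of the ample divisor of the stable pair corresponding to $p$, hence ample, and in particular big on each component; adding the effective divisor $\varepsilon(\bfa-\bfb)\nu^*D$ preserves bigness, so $\nu^*L$, and therefore $L$, is big on each component. Thus $h$ contracts no component, and combined with the standard fact that the ample model satisfies $h_*\calO_Y=\calO_X$ (so $h$ has connected fibers and is birational of degree one), this yields (2) and the final assertion of (3). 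Property (4) then follows from the projection formula: on one hand $h_*L=h_*h^*A=A$ since $h_*\calO_Y=\calO_X$, while on the other $h_*L=h_*K_Y+\bfv(t_i)h_*D=K_X+\bfv(t_i)D_X$ because $h$ is birational and $D_X=h_*D$; hence $A=K_X+\bfv(t_i)D_X$ and $h^*(K_X+\bfv(t_i)D_X)=L$.

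The heart of the lemma is the containment $\operatorname{Exc}(h)\subseteq\Supp(\bfv(t_i)D)$ in (3), which I would extract from the fact that passing from $\bfv(t_i-\varepsilon)$ to $\bfv(t_i)$ only perturbs coefficients supported on $D$. Let $C$ be any $h$-contracted curve. By (1) we have $L\cdot C=0$, whereas $(K_Y+\bfv(t_i-\varepsilon)D)\cdot C>0$ since this divisor is ample. Subtracting,
\[
\varepsilon(\bfa-\bfb)D\cdot C=L\cdot C-(K_Y+\bfv(t_i-\varepsilon)D)\cdot C<0,
\]
so $C$ meets the effective divisor $(\bfa-\bfb)D$ negatively and is therefore contained in $\Supp((\bfa-\bfb)D)\subseteq\Supp(\bfv(t_i)D)$. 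Since $h$ is a birational contraction with connected fibers, $\operatorname{Exc}(h)$ is covered by such contracted curves, giving the claim.

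The main obstacle will be the bookkeeping forced by non-normality: $Y$ and $X$ are only deminormal, so I must check that $K_Y$, the pushforward $h_*$, and the identity $h_*\calO_Y=\calO_X$ behave as in the normal case. I would establish $h_*\calO_Y=\calO_X$ by Stein factorization together with the $S_2$ property of stable pairs, so that the finite part of the factorization is an isomorphism, and I would justify that $\operatorname{Exc}(h)$ is covered by contracted curves by passing to the normalization, where Zariski's main theorem applies and positive-dimensional fibers necessarily contain curves. The numerical computation above is the conceptual key, but these deminormality technicalities are where the real care is required.
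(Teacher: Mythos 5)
Your proof is correct, and for parts (1), (2) and (4) it takes a genuinely different route from the paper's. The paper handles the slc fiber by passing to a one-parameter smoothing $(\sY,\bfv(t_i-\varepsilon)\sD)\to\Spec(R)$ with klt generic fiber: the total spaces $\sY$ and $\sX$ of the smoothing and of its relative canonical model are \emph{normal}, so (1) and (2) are read off from standard properties of the canonical model morphism $\gamma\colon\sY\to\sX$ of normal varieties restricted to the central fiber, and (4) comes from the crepancy $\gamma^*(K_{\sX}+\bfv(t_i)\sD_{\sX}+\sX_c)=K_{\sY}+\bfv(t_i)\sD+\sY_c$ via \cite[Lemma 1.28]{Kollarsingmmp}. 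You instead stay on the deminormal fiber $Y$ and run the semiample-fibration package there: $L=h^*A$ with $A$ ample plus the projection formula gives (1), $h_*\calO_Y=\calO_X$ gives (2), and $h_*h^*A=A$ together with $h_*K_Y=K_X$ gives (4). This is legitimate because the semiampleness of $K_Y+\bfv(t_i)D$ on the fiber and the identification $X=\Proj R(Y,K_Y+\bfv(t_i)D)$ are exactly what Theorem \ref{theorem:flip-like:morphisms} supplies (via Lemma \ref{lemma:bpf:thm}, i.e.\ via the same smoothing, so the one-parameter degeneration is still doing work in the background), and you correctly flag the deminormal bookkeeping ($h_*\calO_Y=\calO_X$ via Stein factorization, covering $\operatorname{Exc}(h)$ by contracted curves, $h_*K_Y=K_X$) as the points needing care. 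Your argument for (3) is the same numerical computation as the paper's, merely phrased in the direct rather than contrapositive form; your separate bigness-on-components argument for the last clause of (3) is an alternative to the paper's observation that the containment $\operatorname{Exc}(h)\subseteq\Supp(\bfv(t_i)D)$ already forbids contracting components since the support of the boundary contains no component of $Y$. The trade-off is that the paper's detour through the normal total space imports the needed MMP facts for free, whereas your route is more self-contained on the fiber but obliges you to re-verify the standard facts about ample models on a non-normal scheme.
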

	\begin{proof}

		By Theorem \ref{theorem:flip-like:morphisms} and the construction of $\beta_{t_i}$, $X$ is the $\operatorname{Proj}$ of the log canonical ring of $(X, \bfv(t_i)D)$, the \emph{a priori} rational map $h : X \to Y$ is a morphism, $D_X = h_*D$, and the formation of the $\operatorname{Proj}$ and $h_*D$ as a Weil divisor both commute with base change. If $Y$ is klt, then $X$ is klt and $(1)$, $(2)$ and $(4)$ follow from basic properties of the canonical model of log terminal model. 
		
		In general, every point of $\sM_{t_i - \varepsilon}$ is smoothable to a klt pair. Therefore, consider a one parameter family $(\sY,\mathbf{v}(t_i-\varepsilon)\sD) \to \Spec(R)$ in $\sM_{t_i-\varepsilon}$ with closed fiber isomorphic to $(Y,\mathbf{v}(t_i-\varepsilon)D)$ and generic fiber klt, and consider the relative canonical model of $(\sY,\mathbf{v}(t_i)D)$ over $\Spec(R)$, namely $(\sX,\mathbf{v}(t_i)\sD_{\sX}) \to \Spec(R)$. Then the pair $(X,\mathbf{v}(t_i)D_X)$ is the closed fiber of $(\sX,\mathbf{v}(t_i)\sD_{\sX}) \to \Spec(R)$ and the total spaces $\sX$ and $\sY$ are normal so from the construction of the canonical model, we have that:
		\begin{itemize}
			\item The morphism $\gamma:\sY \to \sX$ has connected fibers, and
			\item a curve $C \subseteq Y$ gets contracted by $\gamma$ if and only if $$C \cdot(K_\sY+\mathbf{v}(t_i)\sD)=C \cdot(K_Y+\mathbf{v}(t_i)D)=0.$$
		\end{itemize}In particular, we have shown (1). Moreover, since a fiber of $h: Y \to X$ is also a fiber of $\gamma: \sY \to \sX$, we have also shown (2).
		
		To prove (3), we only need to check that a curve $C$ which is not contained in $\Supp(\mathbf{v}(t_i)D)$, satisfies $(K_Y+\mathbf{v}(t_i)D) \cdot C>0$. Note that $(K_Y+\mathbf{v}(t_i-\varepsilon)D) \cdot C>0$ since the pair $(Y,\mathbf{v}(t_i-\varepsilon)D)$ is stable.
		Moreover, $(\mathbf{v}(t_i)D) \cdot C \ge (\mathbf{v}(t_i-\varepsilon)D)\cdot C$ since $C$ is not contained in $\Supp(\mathbf{v}(t_i)D)$. Therefore
		$$(K_Y+\mathbf{v}(t_i)) \cdot C \ge (K_Y+\mathbf{v}(t_i-\varepsilon)) \cdot C>0.$$
		
		Finally, to show (4), let us denote the closed point by $c \in \Spec(R)$. By \cite[Lemma 1.28]{Kollarsingmmp}, the pair $(\sX,\mathbf{v}(t_i)\calD_{\sX}+\sX_c)$ is the stable model of $(\sY,\mathbf{v}(t_i)\sD+\sY_c)$. In particular, there is a morphism $\gamma:\sY \to \sX$ which restricts to $h$,  such that $\gamma^*(K_{\sX} +\mathbf{v}(t_i)\sD_{\sX}+\sX_c)=K_{\sY}+\mathbf{v}(t_i)\sD+\sY_c$. But
		$$(K_{\sX}+ \mathbf{v}(t_i)\sD_{\sX}+\sX_c)_{|\sX_c}=K_X+\mathbf{v}(t_i)D_X$$ and $(K_\sY+\mathbf{v}(t_i)\sD+\sY_c)_{|\sY_c}=K_Y+\mathbf{v}(t_i)D$, so (4) follows from the commutative diagram below, and functoriality of pull back:
		$$\xymatrix{Y \ar[r] \ar[d]_h & \sY \ar[d]^\gamma \\ X\ar[r]& \sX.}$$\end{proof}
	
	Our task now is to show that given $(X, \bfv(t_i)D_X)$, there are finitely many $(Y, \bfv(t_i - \varepsilon)D)$ as in Lemma \ref{lemma:the:reduction:morphisms:contract:curves:in:D}. In fact, it suffices to show there are countably many. The following lemma allows us to normalize $(X, \bfv(t_i)D_X)$ and reduce to the log canonical case. 
	
	\begin{lemma}\label{Lemma:red:lc:case}
		Let $(X,\bfv(t)D)$ be a $\bfv(t)$-weighted stable pair corresponding to a point $q \in \calM_t(k)$ and let $\nu:X^\nu\to X$ be the normalization of $X$, with conductor divisor $\overline{\Gamma} \subset X^\nu$. Assume that there are countably many log canonical  pairs $(Y,\bfv(t)D_Y+\Gamma_Y)$ such that 
		\begin{itemize}
			\item $(Y,\bfv(t-\varepsilon)D_Y+\Gamma_Y)$ is stable, 
			\item $K_Y + \bfv(t) D_Y+\Gamma_Y$ is semiample, and
			\item the canonical model of $(Y,\bfv(t) D_Y+\Gamma_Y)$ is $(X^\nu, \bfv(t)D + \overline{\Gamma})$.
		\end{itemize}
		Then the fiber of $\beta_{t}^{-1}(q)$ is countable.
	\end{lemma}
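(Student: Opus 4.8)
The plan is to reduce the counting problem to the normal, log canonical situation governed by the hypothesis, and then to recover the seminormal (slc) pairs by Koll\'ar's gluing theory. Let $p\in\beta_t^{-1}(q)$ correspond to the $\bfv(t-\varepsilon)$-weighted stable pair $(W,\bfv(t-\varepsilon)D_W)$. By Lemma \ref{lemma:the:reduction:morphisms:contract:curves:in:D} there is a birational morphism $h:W\to X$ with $h^*(K_X+\bfv(t)D_X)=K_W+\bfv(t)D_W$ and $\operatorname{Exc}(h)\subseteq\Supp(\bfv(t)D_W)$; since the boundary of an slc pair contains no component of the double locus (Definition \ref{def:slc}), $h$ is an isomorphism at the generic points of the conductor. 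As $h$ is birational and contracts no component, it lifts by functoriality of normalization to a commutative square with $h^\nu:W^\nu\to X^\nu$, where $\nu_W:W^\nu\to W$ and $\nu_X:X^\nu\to X$ are the normalizations.

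First I would check that the normalized pair $(W^\nu,\bfv(t)\nu_W^*D_W+\Gamma_W)$, with $\Gamma_W\subset W^\nu$ the conductor, satisfies the three bullets of the hypothesis. Log canonicity and the stability of the $\bfv(t-\varepsilon)$-pair are immediate from Definition \ref{def:slc} together with the fact that $K_{W^\nu}+\bfv(t-\varepsilon)\nu_W^*D_W+\Gamma_W=\nu_W^*(K_W+\bfv(t-\varepsilon)D_W)$ is a finite pullback of an ample divisor. For the crepancy over $X^\nu$, I would pull the identity of Lemma \ref{lemma:the:reduction:morphisms:contract:curves:in:D}(4) back along $\nu_W$ and use the conductor adjunction $\nu_\bullet^*(K_\bullet+\bfv(t)D_\bullet)=K_{\bullet^\nu}+\bfv(t)\nu_\bullet^*D_\bullet+(\text{conductor})$ on both sides, obtaining
\[
(h^\nu)^*\bigl(K_{X^\nu}+\bfv(t)\nu_X^*D_X+\overline{\Gamma}\bigr)=K_{W^\nu}+\bfv(t)\nu_W^*D_W+\Gamma_W.
\]
Since $\nu_X$ is finite and $K_X+\bfv(t)D_X$ is ample, the divisor $K_{X^\nu}+\bfv(t)\nu_X^*D_X+\overline{\Gamma}$ is ample; hence $K_{W^\nu}+\bfv(t)\nu_W^*D_W+\Gamma_W$ is semiample and its canonical model is exactly $(X^\nu,\bfv(t)\nu_X^*D_X+\overline{\Gamma})$. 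Thus $(W^\nu,\bfv(t)\nu_W^*D_W+\Gamma_W)$ is one of the countably many pairs supplied by the hypothesis.

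It remains to bound, for each such normalized pair, the number of slc pairs having it as normalization. Here I would invoke Koll\'ar's gluing theory \cite[Chapter 5]{Kollarsingmmp}: the slc pair $(W,\bfv(t-\varepsilon)D_W)$ is recovered from its normalization together with an involution $\tau$ of the normalization $\Gamma_W^n$ of the conductor that is compatible with the different $\Diff_{\Gamma_W^n}$. By adjunction the pair $(\Gamma_W^n,\Diff_{\Gamma_W^n})$ carries the ample log canonical polarization obtained by restricting the ample divisor $\nu_W^*(K_W+\bfv(t-\varepsilon)D_W)$, so it is a stable pair and in particular has finite automorphism group; consequently there are only finitely many admissible involutions $\tau$, hence finitely many slc pairs with a given normalization. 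Combining this with the countability of the set of normalizations shows that the isomorphism classes of pairs arising from $\beta_t^{-1}(q)$ form a countable set, and since distinct points of the fiber give distinct isomorphism classes, $\beta_t^{-1}(q)$ is countable. I expect the last step to be the main obstacle: one must confirm that the gluing data is genuinely constrained to a finite set, which rests on the ampleness of the different-polarization on the conductor and the resulting finiteness of automorphisms.
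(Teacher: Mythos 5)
Your proof is correct and follows essentially the same route as the paper: normalize the pair in the fiber, use Lemma \ref{lemma:the:reduction:morphisms:contract:curves:in:D} together with conductor adjunction to verify that the normalization satisfies the three bullets of the hypothesis (the paper additionally spells out the curve-contraction check identifying the canonical model), and then bound the slc pairs with a given normalization via Koll\'ar's gluing theory and finiteness of automorphisms of the different. No substantive differences to report.
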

	\begin{proof}
		Let $(Z,\bfv(t_i - \varepsilon)D_Z)$ be a pair in the fiber of $\beta_{t_i}$ at $p$, and let $$(Z^\nu,\bfv(t_i - \varepsilon)D_Z^\nu + \overline{\Gamma}_Z)$$ be its normalization. We need to show two claims: 
		\begin{enumerate}[(1)] 
			\item $K_{Z^\nu} +  \bfv(t_i)D_Z^\nu + \overline{\Gamma}_Z$ is semiample and the canonical model of
			$$(Z^\nu,\bfv(t_i)D_Z^\nu + \overline{\Gamma}_Z)$$ is $(X^\nu, \bfv(t)D + \overline{\Gamma})$, and
			\item there are only finitely many stable pairs with a given normalization.
		\end{enumerate}
		Claim $(1)$ and the assumption imply there are countably many pairs $$(Z^\nu,\bfv(t_i - \varepsilon)D_Z^\nu + \overline{\Gamma}_Z)$$ which could be the normalization of the pair $(Z,\bfv(t_i - \varepsilon)D_Z)$. Claim $(2)$ follows from \cite[Theorem 5.13]{Kollarsingmmp}. For claim $(1)$, we first produce a morphism as below, using the universal property of the normalization (see \cite[Tag 0BB4]{stacks-project}):
		
		$$
		\xymatrix{(Z^\nu,\mathbf{v}(t_i)D^\nu_Z+\overline{\Gamma}_Z) \ar@{.>}[r]^-{\gamma^\nu} \ar[d]_\nu & (X^\nu, \mathbf{v}(t_i) D^\nu+ \overline{\Gamma}) \ar[d]^\mu 
			\\ 
			(Z,\mathbf{v}(t_i) D_Z) \ar[r]^\gamma & (X, \mathbf{v}(t_i)D)}
		$$
		
		We need to check that the composition $Z^\nu \xrightarrow{\nu} Z \xrightarrow{\gamma} X $ does not contract any irreducible components.
		This follows since $\nu$ is finite and $\gamma$ does not contract any irreducible component by Lemma \ref{lemma:the:reduction:morphisms:contract:curves:in:D}. By Lemma, \ref{lemma:the:reduction:morphisms:contract:curves:in:D} we also have that $\gamma^*(K_X+\bfv(t_i)D) = K_Z+\bfv(t_i )D_Z$. Moreover,
		since $\mu$ and $\nu$ are normalizations, $$\nu^*(K_Z+\bfv(t_i)D_Z) = K_{Z^\nu}+\bfv(t_i)D_Z^\nu + \overline{\Gamma}_Z, \text{ and }
		$$
		$$
		\mu^*(K_X + \bfv(t_i)D) = K_{X^\nu} + \bfv(t_i)D^\nu + \overline{\Gamma}.
		$$
		Then from the commutativity of the diagram above we have $$(\gamma^\nu)^*(K_{X^\nu} + \bfv(t_i)D^\nu + \overline{\Gamma}) = K_{Z^\nu} + \mathbf{v}(t_i)D^\nu_Z+\overline{\Gamma}_Z.$$
		The latter is semiample as it is the pull back of an ample divisor. To check that $\gamma^\nu$ is the canonical model, it suffices to check that a curve $C$ is contracted by $\gamma^\nu$ if and only if $C\cdot(K_{Z^\nu} + \mathbf{v}(t_i)D^\nu_Z+\overline{\Gamma}_Z)=0$.
		Since the normalizations $\nu$ and $\mu$ do not contract curves, we have:
		\begin{align*}
			\gamma^\nu \text{ contracts }C &\Longleftrightarrow \mu \circ \gamma^\nu \text{ contracts }C \\ &\Longleftrightarrow \gamma \circ \nu \text{ contracts }C \Longleftrightarrow \gamma \text{ contracts } \nu(C).
		\end{align*}
		From Lemma \ref{lemma:the:reduction:morphisms:contract:curves:in:D}, the morphism $\gamma$ contracts $\nu(E)$ if and only if 
		$$
		0=(K_Z+\mathbf{v}(t_i)D_Z)\cdot\nu_*(C)=(K_{Z^\nu} + \mathbf{v}(t_i)D^\nu_Z+\overline{\Gamma}_Z)\cdot C,
		$$ 
		as desired.
		
		To conclude the statement, observe that by \cite[Theorem 5.13]{Kollarsingmmp}, a stable pair $(Z,\bfv(t_i-\varepsilon)D_Z)$ is uniquely determined by its normalization $$(Z^\nu,\bfv(t_i-\varepsilon)D_Z + \overline{\Gamma}_Z)$$, and an involution on the \textit{different} $(\overline{\Gamma}^\nu_Z, \Diff_{\Gamma^\nu_Z}(D))$. The latter is a stable pair by adjunction, and so has finitely many automorphisms by \cite[Proposition 5.5]{KP17}.
	\end{proof}
	We are left with proving the following result, which was communicated to us by J\'anos Koll\'ar. We thank him for allowing us to include it here.
	
	\begin{prop}\label{prop:lc:case:finite}
		Assume that $(X,D + \bar{\Gamma})$ is a normal stable pair, and let $I$ be a finite set of positive rational numbers. Consider the set of pairs $(Y,D_Y + \overline{\Gamma}_Y)$ such that:
		\begin{enumerate}
			\item $(Y,(1-\varepsilon)D_Y + \overline{\Gamma}_Y)$ is stable for every $0<\varepsilon < \varepsilon_0$,
			\item The canonical model of $(Y,D_Y + \overline{\Gamma}_Y)$ is $(X,D + \bar{\Gamma})$, and
			\item The coefficients of $D_Y + \overline{\Gamma}_Y$ are in $I$.
		\end{enumerate}
		Then this set is countable.
	\end{prop}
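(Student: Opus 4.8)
The plan is to show that each admissible pair $(Y, D_Y + \overline{\Gamma}_Y)$ is reconstructed from the single fixed pair $(X, D + \bar{\Gamma})$ together with a finite amount of discrete birational data, and then to bound that data using finiteness of ample models. First I would make the geometry explicit. Hypothesis (2) gives a proper birational crepant morphism $g : Y \to X$ with $K_Y + D_Y + \overline{\Gamma}_Y = g^*(K_X + D + \bar{\Gamma})$ and $g_*(D_Y + \overline{\Gamma}_Y) = D + \bar{\Gamma}$. Any $g$-contracted curve $C$ satisfies $(K_Y + D_Y + \overline{\Gamma}_Y)\cdot C = 0$, while hypothesis (1) forces $(K_Y + (1-\varepsilon)D_Y + \overline{\Gamma}_Y)\cdot C > 0$; subtracting shows $D_Y \cdot C < 0$. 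Hence $-D_Y$ is $g$-ample, so $\operatorname{Exc}(g) \subseteq \Supp(D_Y)$ and $Y$ is the ample model over $X$ of $-D_Y$. In particular every $g$-exceptional prime divisor is a component of $D_Y$, with coefficient $c_E \in I$, and with discrepancy $a(E, X, D + \bar{\Gamma}) = -c_E$ lying in the finite set $-I \subseteq [-1,0)$.

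Next I would argue that the divisorial valuations occurring as $g$-exceptional divisors, as $(Y, D_Y + \overline{\Gamma}_Y)$ ranges over the whole set, belong to a single finite (or at least countable) set $\mathcal{E}$ of valuations over $X$. Over the open locus where $(X, D + \bar{\Gamma})$ is klt this is clean: a $\mathbb{Q}$-factorial terminalization is a finite-type morphism extracting precisely the finitely many divisors of discrepancy $\le 0$, and our extracted divisors have discrepancy in $-I \subseteq (-1,0)$ there. Over the non-klt locus (along $\bar{\Gamma}$, where the coefficient is $1$) the extracted divisors are the components of $D_Y$ meeting that locus, still of discrepancy in $-I$, and here one must bound the valuations by hand, using that only those valuations admitting an \emph{lc}, $-D_Y$-ample crepant contraction can appear.

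Granting finiteness of $\mathcal{E}$, I would fix one $\mathbb{Q}$-factorial model $p : W \to X$ extracting all of $\mathcal{E}$, which exists because $\mathcal{E}$ is finite. Each $Y$ then sits in a factorization $W \to Y \to X$ and is realized as the ample model over $X$ of $K_W + \Delta_Y^{(\varepsilon)}$, where $\Delta_Y^{(\varepsilon)}$ is the crepant pullback boundary with the $D$-part scaled by $1-\varepsilon$; its coefficients are determined by $I$, by $\varepsilon$, and by the discrepancies of $p$, so all such $\Delta_Y^{(\varepsilon)}$ lie in a fixed rational polytope in the space of boundaries on $W$ (after a standard klt perturbation to which BCHM applies). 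By finiteness of ample models in a polytope (\cite[Corollary 1.1.5]{BCHM10}, already used for the walls), only finitely many distinct $Y$ arise; since $D_Y + \overline{\Gamma}_Y$ is then the crepant pullback and is thus determined by $Y$, the entire set is finite, hence countable.

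The hard part will be the middle paragraph: controlling which valuations can be extracted. The difficulty is that over an uncountable ground field there are uncountably many divisorial valuations over $X$, and infinitely many lc places along $\bar{\Gamma}$, so one cannot simply enlarge $W$ to absorb all possible $\mathcal{E}$. The klt locus is handled by the terminalization as above, but near the non-klt locus the key point is that the joint conditions ``$(Y,(1-\varepsilon)D_Y+\overline{\Gamma}_Y)$ is lc and $-D_Y$ is $g$-ample'' cut the extractable valuations down to finitely many carried by a fixed dlt modification. I would finally note that, for the application in Lemma \ref{Lemma:red:lc:case}, only \emph{countability} of the set is required; this leaves room to replace the finiteness of $\mathcal{E}$ by the weaker assertion that countably many valuations are extractable, which may be more robust when the non-klt analysis is delicate.
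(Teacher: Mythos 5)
Your opening paragraph is correct and matches the paper's setup: the contraction $g:Y\to X$ is crepant, a contracted curve must meet $D_Y$ negatively, so $\operatorname{Exc}(g)\subseteq\Supp(D_Y)$ and every extracted divisor has discrepancy $a(E;X,D+\bar{\Gamma})$ equal to minus its coefficient, hence lying in a fixed finite set of negative numbers. But the middle paragraph, which you yourself flag as ``the hard part,'' is a genuine gap, and the specific statement you hope to prove there is false. Because $\bar{\Gamma}$ carries coefficient $1$, the pair $(X,D+\bar{\Gamma})$ is lc but not klt, and over a non-klt pair the set of divisorial valuations with discrepancy in a fixed negative range is in general \emph{countably infinite}, not finite: repeatedly blowing up the intersection of an exceptional divisor with (the strict transform of) a component of coefficient $1$ produces infinitely many divisors all with the same negative discrepancy. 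A $\Q$-factorial terminalization handles only the klt locus, exactly as you say, and no finite set $\mathcal{E}$ exists in general. This breaks your final step as written: you cannot fix a single finite-type model $W\to X$ extracting all of $\mathcal{E}$, so the reduction to finiteness of ample models in a polytope via \cite[Corollary 1.1.5]{BCHM10} does not get off the ground. Your closing remark that countability of $\mathcal{E}$ would suffice is the right instinct, but countability of $\mathcal{E}$ does not plug into the fixed-$W$/BCHM argument; you would need a separate uniqueness statement saying that $Y$ is determined by its (finite) set of extracted valuations and their coefficients.

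The paper supplies exactly the two missing ingredients. First, uniqueness: by Matsusaka--Mumford \cite[Theorems 11.39 \& 11.40]{kollarmodulibook}, the pair $(Y,D_Y+\overline{\Gamma}_Y)$ is determined by $(X,D)$, the coefficient set $I$, and the set of extracted divisors with their discrepancies; since each $Y$ extracts only finitely many divisors, it then suffices to show the set of \emph{all} negative-discrepancy valuations over $(X,D+\bar{\Gamma})$ is countable (finite subsets of a countable set form a countable set). Second, countability: pass to a log resolution $Z\to X$ and use Lemma \ref{lemma:negative:discrepancies:logsmooth:pair}, which shows (via \cite[Lemma 2.45]{KM98}) that any divisor of negative discrepancy over an snc lc pair is obtained by a finite sequence of blow-ups of strata; there are only countably many such sequences. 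If you replace your third paragraph by this uniqueness argument and your second paragraph by the strata-blow-up lemma, your proof closes up and essentially coincides with the paper's.
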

	\begin{proof}Let $p:Y\to X$ be the morphism which gives the stable model of $(Y,D_Y + \overline{\Gamma}_Y)$. 
		First observe that, by a theorem of Matsusaka--Mumford applied to the variety $Y$ and the divisor $K_Y + (1-\varepsilon)D_Y + \overline{\Gamma}_Y$ (see \cite[Theorems 11.39 \& 11.40]{kollarmodulibook}), the pair $(Y,D_Y + \overline{\Gamma}_Y)$ is uniquely determined by $(X,D)$, the coefficients $I$, and the divisors extracted by $Y\to X$ with their discrepancy. These divisors have strictly negative discrepancy for $(X,D)$. Indeed, by Lemma \ref{lemma:the:reduction:morphisms:contract:curves:in:D} the exceptional locus is contained in $D_Y+\overline{\Gamma}_Y>0$ and $$p^*(K_X+D + \overline{\Gamma})=K_Y+D_Y + \overline{\Gamma}_Y = K_Y + p_*^{-1}(D + \overline{\Gamma}) - \sum_{E\text{ is }p-\text{exceptional}} a(E,X,D)E.$$
		Then it suffices to show that there are countably many divisors with strictly negative discrepancy for $(X,D + \overline{\Gamma})$. Indeed, let $Z\to X$ be a log-resolution of $(X,D + \overline{\Gamma})$. As the map $q:Z\to X$ extracts finitely many divisors, it suffices to check that there are finitely many divisors with negative discrepancy for the pair $(Z,D_Z + \overline{\Gamma}_Z)$, where $D_Z + \overline{\Gamma}_Z$ satisfies $K_Z+D_Z + \overline{\Gamma}_Z = q^*(K_X + D + \overline{\Gamma})$ (so $D_Z + \overline{\Gamma}_Z$ no longer needs to be effective). This follows from Lemma \ref{lemma:negative:discrepancies:logsmooth:pair} below, as such divisors can only be extracted by repeatedly blowing up some strata of the snc pair $(Z,D_Z + \overline{\Gamma}_Z)$ (Definition \ref{def:strata} below).
	\end{proof}
	
	
	\begin{definition}\label{def:strata}(see \cite[Definition 11.10]{kollarmodulibook})
		Let $(X,D = \sum_{i \in I}a_i D_i)$ be a simple normal crossing (snc) pair. A \emph{stratum} of $(X,D)$ is any irreducible component of an intersection $\cap_{i \in J} D_i$ for some $J \subset I$. \end{definition}

	\begin{lemma}\label{lemma:negative:discrepancies:logsmooth:pair}
		Let $(X,D)$ be an lc pair, with $D$ a priori not effective, such that $(X,\Supp(D))$ is snc. Let $E \subseteq W$ be a divisor on a birational model $W \to X$, with $a(E;X,D)<0$. Let $R:= \calO_{W,E}$ be the local ring at the generic point of $E$ in $W$, and let $\xi$ be the closed point of $\Spec(R)$. Then there is a sequence of blow-ups $X_m \xrightarrow{p_m} X_{m+1} \xrightarrow{p_{m-1}} ... \xrightarrow{p_2} X_1:=X$ so that, if we denote with $D_i:=p_i^*(D_{i-1})$ and with $q_i$ the image of $\xi$ through the morphism $\Spec(R) \to X_i$, then:
		\begin{enumerate}
			\item $(X_i,\Supp(D_i))$ is snc;
			\item $X_{i} \to X_{i-1}$ is the blow up of a stratum of $X_{i-1}$, and
			\item $q_m$ has codimension one in $X_m$.
		\end{enumerate}
	\end{lemma}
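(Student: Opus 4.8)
The plan is to convert the numerical hypothesis $a(E;X,D)<0$ into the geometric statement that the center $c_X(E)$ is a \emph{stratum}, and then to extract $E$ by repeatedly blowing up the current center, controlling termination by the discrepancy of $E$ over the smooth ambient space with \emph{no} boundary. Throughout I write $A_X(v):=1+a(v;X,0)$, which for a divisorial valuation over the smooth variety $X$ is a positive integer; it will be the strictly decreasing invariant.

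First I would localize at the generic point $\eta$ of $Z:=c_X(E)$ (the image of $\xi$ in $X=X_1$). If $\operatorname{codim} Z=1$ then $E$ is already a divisor on $X$ and we are done with $m=1$, so assume $\operatorname{codim} Z\ge 2$. Choose a regular system of parameters $x_1,\dots,x_N$ of $\calO_{X,\eta}$ adapted to the snc structure, so that $D_1=\{x_1=0\},\dots,D_r=\{x_r=0\}$ are exactly the components of $\Supp(D)$ through $\eta$, with coefficients $a_1,\dots,a_r$, while $x_{r+1},\dots$ are transverse; set $w_i:=\operatorname{ord}_E(x_i)\ge 0$ and let $S:=D_1\cap\dots\cap D_r$ be the minimal stratum through $\eta$. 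The key reduction is to show $Z=S$. For this I compare $\operatorname{ord}_E$ with the monomial valuation $v_w$ determined by the weights $w_i$. Since $\operatorname{ord}_E(f)\ge \min_\alpha\langle w,\alpha\rangle=v_w(f)$ for every $f$, monotonicity of log discrepancies (equivalently, $v_w$ is the minimal valuation with the prescribed values on the $x_i$) gives $A_X(\operatorname{ord}_E)\ge A_X(v_w)=\sum_i w_i$. Because only the components through $\eta$ contribute, $\operatorname{ord}_E(D)=\sum_{i\le r}a_iw_i$, and subtracting yields
$$
a(E;X,D)=A_X(\operatorname{ord}_E)-1-\operatorname{ord}_E(D)\ \ge\ \sum_{i\le r}w_i(1-a_i)+\sum_{j>r}w_j-1 .
$$
As $(X,D)$ is lc we have $a_i\le 1$, so each $w_i(1-a_i)\ge 0$, while each $w_j$ with $j>r$ is a nonnegative integer. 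If $Z\subsetneq S$, some transverse coordinate vanishes on $Z$, so $\sum_{j>r}w_j\ge 1$ and the bound forces $a(E;X,D)\ge 0$, a contradiction. Hence $Z=S$ is a stratum of codimension $r\ge 2$.

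Now I would run the blow-up process. Blowing up $S$ gives $p\colon X_2=\mathrm{Bl}_S X\to X$ with exceptional divisor $F$; setting $D_2:=p^*D$, the pair $(X_2,\Supp D_2)$ is snc because blowing up a stratum of an snc pair preserves the snc property, and $p$ is the blow-up of a stratum, giving conditions (1) and (2). Since $S=c_X(E)$, the center $q_2=c_{X_2}(E)$ lies in $p^{-1}(S)=F$, so $\operatorname{ord}_E(F)\ge 1$; from $K_{X_2}=p^*K_X+(r-1)F$ we obtain
$$
A_{X_2}(\operatorname{ord}_E)=A_X(\operatorname{ord}_E)-(r-1)\operatorname{ord}_E(F)\ \le\ A_X(\operatorname{ord}_E)-(r-1),
$$
a strict decrease as $r\ge 2$. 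Passing to the crepant pullback $(X_2,B_2)$ with $K_{X_2}+B_2=p^*(K_X+D)$ preserves lc-ness and keeps all coefficients $\le 1$ (the coefficient of $F$ is $\sum_{i\le r}a_i-r+1\le 1$), and $a(E;X_2,B_2)=a(E;X,D)<0$; so the previous paragraph applies verbatim on $(X_2,\Supp D_2)$, and $q_2$ is again a stratum. Iterating, the positive integer $A_{X_i}(\operatorname{ord}_E)$ strictly decreases as long as the center has codimension $\ge 2$; by integer descent the process terminates, and it can only terminate with $q_m$ of codimension one, which is condition (3).

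The main obstacle is the middle step: promoting $a(E;X,D)<0$ to the statement that the center is a stratum. This rests entirely on the comparison $A_X(\operatorname{ord}_E)\ge A_X(v_w)=\sum_i w_i$ between $E$ and its monomialization, which is where one pays for $X$ being smooth and $(X,\Supp D)$ being snc. Once the center is known to be a stratum, the descent of the smooth-ambient discrepancy $A_{X_i}(\operatorname{ord}_E)$ under blowing up that stratum is elementary, and together with the standard fact that blowing up a stratum preserves the snc property it immediately yields termination and the three asserted properties.
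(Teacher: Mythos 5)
Your proof is correct and takes essentially the same route as the paper's: iteratively blow up the center of $E$, show at each stage that the center is a stratum of the (crepant-pulled-back) snc pair using $a(E;X,D)<0$ together with the bound coming from log canonicity of snc pairs with coefficients $\le 1$, and use preservation of discrepancies under crepant pullback. The only difference is that you unpack the two results the paper cites — proving the ``center is a stratum'' step via the monomial-valuation comparison instead of invoking \cite[2.10.1]{Kollarsingmmp}, and proving termination via the strictly decreasing positive integer $A_{X_i}(\operatorname{ord}_E)$ instead of invoking \cite[Lemma 2.45]{KM98}.
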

	\begin{proof}Let $v$ be the valuation associated to $R$.
		Now, \cite[Lemma 2.45]{KM98} gives us a recipe for producing a sequence of blow-ups $X_m \to X_{m-1} \to ... \to X_0$ so that $E$ is a divisor in $X_m$. In particular, each morphism $X_i \to X_{i-1}$ is the blow-up of the closure of $q_{i-1}$.
		
		Therefore, since if we blow-up a stratum in a log-smooth pair, we still get a log-smooth pair, it suffices to proceed by induction showing that:
		\begin{itemize}
			\item The closure of $q_i$ is a stratum in $X_i$, and
			\item For every divisor $F$ over $X$, we have $a(F;X_i,D_i)=a(F;X_{i+1},D_{i+1})$.
		\end{itemize}
		The first claim follows from \cite[2.10.1]{Kollarsingmmp} and the next line, the second from \cite[Lemma 2.30]{KM98}.
	\end{proof}
	
	\begin{proof}[Proof of Theorem \ref{thm:finite}]By Lemma \ref{Lemma:red:lc:case} and Proposition \ref{prop:lc:case:finite}, the fibers of $\beta_{t_i}$ are countable. Since$\beta_{t_i}$ is of finite type, it follows that the fibers are finite, so $\beta_{t_i}$ is quasi-finite.
	\end{proof}
	
	\section{Reduction morphisms up to normalization}\label{Section:reduction} 
	
	The goal of this section is to construct reduction morphisms $\rho_{\bfa, \bfb}$ for weight vectors $\bfb \le \bfa$ generalizing Hassett's reduction morphisms \cite[Theorem 4.1]{Hassett} to higher dimensions. To accomplish this, we need to normalize the moduli space (see Section \ref{sec:normalization} for an example showing this is necessary). 
	
	\begin{definition}\label{Def:calN} In the setting of Definition \ref{Def:calM}, we let $\sN_t$  for $t \in [0,1]$ denote the normalization of $\sM_t$. We denote by $\sN_\bfa$ (resp. $\sN_\bfb$) the normalization of $\sM_\bfa$ (resp. $\sM_{\bfb}$).  
	\end{definition}

	\begin{theorem}\label{theorem:reduction:are:iso} Let $\bfb \le \bfa$ be weight vectors and $0 < t_1 < \ldots < t_m = 1$ the $(\bfb \to \bfa)$-walls. Then for any $t_i$, the flip-like morphism $\beta_{t_i} : \sM_{t_i - \varepsilon} \to \sM_{t_i}$ induces an isomorphism $\beta_{t_i}^\nu:\sN_{t_i-\varepsilon} \to \sN_{t_i}$. \end{theorem}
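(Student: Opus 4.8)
The plan is to deduce the statement from the basic dichotomy behind Zariski's Main Theorem: a morphism that is quasi-finite, proper and birational onto a normal target is an isomorphism. Two of the three inputs are essentially in hand. By Theorem \ref{thm:finite} the morphism $\beta_{t_i}$ is quasi-finite, and since both $\sM_{t_i-\varepsilon}$ and $\sM_{t_i}$ are proper (Remark \ref{rem:Mt:proper}) it is proper; hence $\beta_{t_i}$ is finite. By Corollary \ref{cor:reduction:morphisms:are:onto} it is also surjective. Thus the whole statement reduces to proving that $\beta_{t_i}$ is \emph{birational} (and representable), after which the passage to normalizations is formal.

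First I would record representability: over the dense klt locus a preimage $(Y,\bfv(s)D_Y)$ carries a birational morphism $h\colon Y\to X$ onto the pair $X$ classified by its image (Lemma \ref{lemma:the:reduction:morphisms:contract:curves:in:D}), so automorphisms of the source pair act faithfully on $X$; hence $\beta_{t_i}$ is injective on automorphism groups and is representable. For birationality it suffices to show injectivity over a dense open of $\sM_{t_i}$, and I would work over the locus $\calU\subset\sM_{t_i}$ parametrizing klt pairs. Fix $q\in\calU$ classifying the klt stable pair $(X,\bfv(t_i)D_X)$ and let $(Y_1,\bfv(s)D_1)$ and $(Y_2,\bfv(s)D_2)$ be two points of $\beta_{t_i}^{-1}(q)$. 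By Lemma \ref{lemma:the:reduction:morphisms:contract:curves:in:D}(4) each comes with a morphism $h_j\colon Y_j\to X$ with $h_j^*(K_X+\bfv(t_i)D_X)=K_{Y_j}+\bfv(t_i)D_j$ and $(h_j)_*D_j=D_X$, so the induced birational map $g=h_2^{-1}\circ h_1\colon Y_1\dashrightarrow Y_2$ is crepant for the weight $\bfv(t_i)$ and matches the marked boundaries.

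The heart of the matter, and the step I expect to be the main obstacle, is to promote this crepant identification to an honest isomorphism $Y_1\cong Y_2$. The difficulty is that, unlike the situation governed by Notation \ref{notation:canonical:model} and Corollary \ref{cor:canonical:model:does:not:depend:on:Qfactorialization}, the morphisms $h_j$ may contract \emph{divisors} (their exceptional loci lie in the boundary), so each $Y_j$ is obtained from $X$ by \emph{extracting} divisors and need not be the $\bfv(s)$-canonical model of $X$ in the small sense; different extractions are exactly what the finiteness argument of Proposition \ref{prop:lc:case:finite} only bounds by a countable set. The extra rigidity I would exploit is the \emph{ampleness} of $K_{Y_j}+\bfv(s)D_j$. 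Concretely, I would choose a single $\bQ$-factorial crepant model $\bar Y\to X$ of $(X,\bfv(t_i)D_X)$ dominating both $Y_1$ and $Y_2$ by birational contractions, and then identify each $Y_j$ with the ample model of $(\bar Y,\bfv(s)\bar D)$ over $X$, using that $K_{Y_j}+\bfv(s)D_j$ is ample and that the contraction $\bar Y\dashrightarrow Y_j$ is $(K+\bfv(s)\bar D)$-nonpositive; uniqueness of ample models then forces $Y_1\cong Y_2$ compatibly with the boundaries (equivalently, once $g$ is seen to be an isomorphism in codimension one, one invokes Lemma \ref{lemma:stable:pairs:that:agree:in:codim:1:iso}). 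Verifying that nonpositivity, i.e.\ that the divisors contracted on passing from $\bar Y$ to $Y_j$ are precisely those rendered redundant at weight $s$, is the genuinely delicate MMP input. Granting it, $\beta_{t_i}$ is injective on $\calU$, hence generically injective, hence birational since we work in characteristic zero.

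Finally I would conclude formally. The morphism $\beta_{t_i}$ is finite, birational and representable between reduced proper Deligne–Mumford stacks. Passing to normalizations (Definition \ref{Def:calN}), the composite $\sN_{t_i-\varepsilon}\to\sM_{t_i-\varepsilon}\xrightarrow{\beta_{t_i}}\sM_{t_i}$ is finite and dominant from a normal stack, so by the universal property of normalization it factors through $\sN_{t_i}$, inducing $\beta_{t_i}^\nu\colon\sN_{t_i-\varepsilon}\to\sN_{t_i}$. This $\beta_{t_i}^\nu$ is finite (by cancellation, as $\sN_{t_i}\to\sM_{t_i}$ is finite and separated) and birational with normal target; a finite birational morphism onto a normal integral target is an isomorphism, so $\beta_{t_i}^\nu$ is an isomorphism, as claimed.
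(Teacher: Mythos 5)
Your overall architecture is exactly the paper's: quasi-finiteness from Theorem \ref{thm:finite}, properness and surjectivity from Remark \ref{rem:Mt:proper} and Corollary \ref{cor:reduction:morphisms:are:onto}, representability via the faithful action of automorphisms through the morphism of Lemma \ref{lemma:the:reduction:morphisms:contract:curves:in:D}, generic injectivity over the klt locus, and then Zariski's main theorem on normalizations. The paper packages the middle steps as Theorem \ref{thm:reduction:morphism:birational:for:ZMT}. However, at the one genuinely load-bearing step --- showing that a klt point $q$ classifying $(X,\bfv(t_i)D_X)$ has a unique preimage --- you explicitly write ``granting it,'' and that is a real gap: the nonpositivity of the contraction $\bar Y\dashrightarrow Y_j$ for $K+\bfv(s)\bar D$ is not something that can be waved through, since it is precisely what rules out the $Y_j$ extracting different collections of divisors over $X$, which is the only mechanism by which the fiber could fail to be a point.

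The missing ingredient is already in the paper: Proposition \ref{prop:MMP:with:scaling:Jakub}. Applied in the klt case, it shows that the morphism $h_j\colon Y_j\to X$ is \emph{small} (so the scenario you worry about, where $Y_j$ is obtained from $X$ by extracting boundary divisors, does not occur over the klt locus), and moreover that $(Y_j,\bfv(t_i-\varepsilon)D_j)$ \emph{is} the canonical model of $(X,\bfv(t_i-\varepsilon)D_X)$ in the sense of Notation \ref{notation:canonical:model}. Uniqueness of canonical models (Corollary \ref{cor:canonical:model:does:not:depend:on:Qfactorialization}) then gives $Y_1\cong Y_2$ compatibly with the marked boundaries, with no need for the auxiliary model $\bar Y$; the negativity-lemma argument you anticipate as ``the genuinely delicate MMP input'' is exactly the content of Lemma \ref{lemma:small} inside the proof of that proposition. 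One further small point: for the stacky form of Zariski's main theorem you need $\beta_{t_i}$ to be an isomorphism (not merely a bijection on geometric points) over a dense open, so besides representability you must also check \emph{surjectivity} on automorphism groups there; the paper does this at the end of the proof of Theorem \ref{thm:reduction:morphism:birational:for:ZMT} by realizing $Y$ as $\Proj$ of a ring functorially attached to $(X,\bfv(t_i)D_X)$ together with its marked components, and your argument should record the analogous verification.
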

	
	The proof of Theorem \ref{theorem:reduction:are:iso} proceeds as follows:
	\begin{enumerate}
		\item $\beta_{t_i}$ is quasi-finite by Theorem \ref{thm:finite} in the previous section, and
		\item $\beta_{t_i}$ is proper, representable, and an isomorphism on a dense open subset (see Theorem \ref{thm:reduction:morphism:birational:for:ZMT} below).
	\end{enumerate}
	Then Theorem \ref{theorem:reduction:are:iso} follows then from Zariski's main theorem.
	
	\begin{definition}\label{def:reduction}
		Composing $(\beta_{t_i}^\nu)^{-1} : \sN_{t_i} \to \sN_{t_i-\varepsilon}$ with $\alpha^\nu_{t_{i-1}} : \sN_{t_{i-1} + \varepsilon} \to \sN_{t_{i - 1}}$ for all $i$ gives the desired reduction morphisms:
		$$
		\begin{array}{c}
			\rho_{\bfb,\bfa}:\sN_{\mathbf{a}} \to \sN_{\mathbf{b}} \\ \rho_{\bfb, \bfa} := \alpha_{t_0}^\nu \circ (\beta_{t_1}^\nu)^{-1} \circ \alpha_{t_1}^\nu \circ \ldots \circ \alpha_{t_{m-1}}^\nu \circ \beta_{t_m}^{-1}.
		\end{array}
		$$
	\end{definition}
	
	\begin{remark} Note that both $\alpha_{t_i}$ and $\beta_{t_i}$ are dominant by Corollary \ref{cor:reduction:morphisms:are:onto}, so they induce morphisms $\alpha_{t_i}^\nu$ and $\beta_{t_i}^\nu$ between normalizations. \end{remark}
	
	\begin{remark}\label{rem:composition} For any weight vector $\bfv(t) = t\bfa + (1-t)\bfb$, the reduction morphisms are compatible by definition: $\rho_{\bfb, \bfv(t)} \circ \rho_{\bfv(t), \bfa} = \rho_{\bfb, \bfa}$. In general, for weight vectors $\bfc \le \bfb \le \bfa$ that are not co-planar we may have
		$$
		\rho_{\bfc, \bfb} \circ \rho_{\bfb, \bfa} \neq \rho_{\bfc, \bfa}. 
		$$
		This is because the construction of the moduli spaces and morphisms a priori depends on the MMP with scaling we used to get from weights $\bfa$ to $\bfb$. In section \ref{section:countereg}, we will give some examples showing this can occur and state conditions under which the reduction morphisms are compatible for all weights (Theorem \ref{thm:last:thm}). 
	\end{remark}
	
	\begin{theorem}\label{thm:reduction:morphism:birational:for:ZMT}
		The morphism $\beta_{t_i} : \sM_{t_i - \varepsilon} \to \sM_{t_i}$ is representable, proper and birational.
	\end{theorem}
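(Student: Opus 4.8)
The plan is to establish the three properties in turn, noting that the first two are comparatively routine and that the real content is birationality. Recall that $\sM_{t_i-\varepsilon}$ is proper by Remark \ref{rem:Mt:proper}, that $\sM_{t_i}$ is a separated Deligne--Mumford stack, and that $\beta_{t_i}$ is surjective by Corollary \ref{cor:reduction:morphisms:are:onto}. For properness, since the diagonal of the separated target $\sM_{t_i}$ is a closed immersion, the graph of $\beta_{t_i}$ is a closed immersion $\sM_{t_i-\varepsilon}\hookrightarrow \sM_{t_i-\varepsilon}\times\sM_{t_i}$; composing with the projection to $\sM_{t_i}$, which is proper because $\sM_{t_i-\varepsilon}$ is proper over $k$, exhibits $\beta_{t_i}$ as proper.

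For representability I would use that a morphism of Deligne--Mumford stacks is representable precisely when it induces injections on the automorphism groups of all geometric points. Let $p$ correspond to the stable pair $(Y,\bfv(t_i-\varepsilon)D_Y)$ and let $\beta_{t_i}(p)$ correspond to $(X,\bfv(t_i)D_X)$. By Lemma \ref{lemma:the:reduction:morphisms:contract:curves:in:D} there is a morphism $h\colon Y\to X$ with connected fibers that contracts no component of $Y$; as $\dim Y=\dim X$ this forces $h$ to be generically finite with connected fibers, hence birational, with $X=\operatorname{Proj}R(K_Y+\bfv(t_i)D_Y)$ and $h_*D_Y=D_X$. By functoriality of this construction, an automorphism $\sigma$ of $(Y,\bfv(t_i-\varepsilon)D_Y)$ is sent to the automorphism $\bar\sigma$ it induces on $X$. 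If $\bar\sigma=\mathrm{id}_X$ then $h\circ\sigma=h$, so $\sigma$ restricts to the identity on the dense open of $Y$ over which $h$ is an isomorphism; since $Y$ is reduced and separated, $\sigma=\mathrm{id}_Y$. Hence the map on automorphisms is injective and $\beta_{t_i}$ is representable.

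The main work is birationality. Combining quasi-finiteness (Theorem \ref{thm:finite}) with properness, $\beta_{t_i}$ is finite; and since $B$ is irreducible, both $\sM_{t_i-\varepsilon}$ and $\sM_{t_i}$ are irreducible. In characteristic zero a finite, generically injective morphism of reduced schemes is birational, so it suffices to prove generic injectivity. Let $p'\colon B\to\sM_{t_i-\varepsilon}$ and $\Phi_{t_i}\colon B\to\sM_{t_i}$ be the structural surjections, so that $\beta_{t_i}\circ p'=\Phi_{t_i}$. The key input is Proposition \ref{prop:MMP:with:scaling:Jakub}: applied over the generic point of $B$ to a small $\mathbb{Q}$-factorialization of the generic fiber, with scaling by $(\bfa-\bfb)\widetilde{D}$ and $t_i$ in the role of $t_0$, it identifies the $\bfv(t_i-\varepsilon)$-weighted stable model $Y$ with the canonical model of $(X,\bfv(t_i-\varepsilon)D_X)$, where $X$ is the $\bfv(t_i)$-weighted model. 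Spreading out, and invoking invariance of canonical models in families as in \cite[Section 4]{hacon2013birational}, there is a dense open $B^\circ\subseteq B$ over which the chamber fiber $Y_b$ is the canonical model of $(X_b,\bfv(t_i-\varepsilon)D_{X_b})$ for the wall fiber $X_b$.

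Since the canonical model of a klt pair is unique (Corollary \ref{cor:canonical:model:does:not:depend:on:Qfactorialization}), this reconstruction is functorial for isomorphisms: for $b,b'\in B^\circ$ one has $X_b\cong X_{b'}$ as $\bfv(t_i)$-weighted pairs if and only if $Y_b\cong Y_{b'}$ as $\bfv(t_i-\varepsilon)$-weighted pairs, i.e. the fibers of $\Phi_{t_i}$ and of $p'$ agree over $B^\circ$. Consequently, if $\beta_{t_i}(p'(b))=\beta_{t_i}(p'(b'))$ for $b,b'\in B^\circ$, then $\Phi_{t_i}(b)=\Phi_{t_i}(b')$ and hence $p'(b)=p'(b')$; that is, $\beta_{t_i}$ is injective on $p'(B^\circ)$, which contains a dense open of $\sM_{t_i-\varepsilon}$ by Chevalley's theorem. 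This yields generic injectivity, hence birationality. I expect this reconstruction step to be the crux: one must match the fiberwise MMP of Proposition \ref{prop:MMP:with:scaling:Jakub} with the global family over $B$ and ensure that the identification of $Y$ with the canonical model of $(X,\bfv(t_i-\varepsilon)D_X)$ persists over a dense open, rather than only at the generic point.
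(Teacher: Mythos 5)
Your overall strategy tracks the paper's: properness from proper source and separated target, representability via injectivity on automorphism groups using Lemma \ref{lemma:the:reduction:morphisms:contract:curves:in:D}, and generic injectivity via the reconstruction of the chamber pair $(Y,\bfv(t_i-\varepsilon)D_Y)$ as the canonical model of $(X,\bfv(t_i-\varepsilon)D_X)$ supplied by Proposition \ref{prop:MMP:with:scaling:Jakub}. The properness and representability steps are fine, and the reconstruction is indeed the right key input for injectivity on geometric points.

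The gap is in the final inference ``finite $+$ generically injective $\Rightarrow$ birational.'' That implication is valid for reduced schemes but fails for Deligne--Mumford stacks, and here birationality must mean that $\beta_{t_i}$ restricts to an isomorphism of \emph{stacks} over a dense open substack (this is what Theorem \ref{theorem:reduction:are:iso} needs in order to conclude that $\beta_{t_i}^\nu$ is an isomorphism). A representable, finite, \'etale morphism that is bijective on geometric points need not be an isomorphism: for $G\subsetneq H$ finite groups, $[\Spec k/G]\to[\Spec k/H]$ is representable, finite \'etale, injective on automorphism groups, and bijective on points, yet not an isomorphism. What is missing is \emph{surjectivity} of the maps on automorphism groups over a dense open, i.e.\ that every automorphism of the wall pair $(X,\bfv(t_i)D_X)$ lifts to an automorphism of $(Y,\bfv(t_i-\varepsilon)D_Y)$. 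This is where the paper's proof does its real work: using the smallness of $Y\to X$ from Proposition \ref{prop:MMP:with:scaling:Jakub}(I), it writes $Y=\Proj\bigl(\bigoplus_n H^0(W_X,nd(K_{W_X}+\bfv(t_i-\varepsilon)(D_X)|_{W_X}))\bigr)$ for a big open $W_X\subseteq X$ (necessary because $D_X$ need not be $\bQ$-Cartier on $X$, so one cannot naively form the canonical model of $(X,\bfv(t_i-\varepsilon)D_X)$ without Notation \ref{notation:canonical:model}), and observes that any automorphism of the pair preserves each marked component $D^{(i)}$ and hence acts on this ring, producing the required lift; combined with \cite[Lemma 3.1]{alpkr} this upgrades the bijection on points to an isomorphism of stacks. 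Your reconstruction step contains the ingredients to prove this lifting (apply it with $b=b'$), but as written you only establish injectivity of automorphism groups and bijectivity on points, which is not sufficient.
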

	\begin{proof}
		Recall that by Theorem \ref{thm:morphismextends},  the morphism $\beta : \sM_{t_i-\varepsilon} \to \sM_{t_i}$ can be described as follows on the dense open subset parametrizing klt pairs. Given a point $p \in \sM_{t_i-\varepsilon}(k)$ corresponding to a stable pair $(Y, \bfv(t_i -\varepsilon)D)$, $\beta_{t_i}(p)$ classifies the canonical model of the pair $(Y, \bfv(t_i)D)$ which we will denote by $(X, \bfv(t_i)D_X)$. 
		
		The morphism $\beta_{t_i}$ is proper since the source and target are proper, and it is surjective by Corollary \ref{cor:reduction:morphisms:are:onto}. It is generically injective, since we can recover
		$(Y,\mathbf{v}(t_i-\varepsilon)D)$ from the pair $(X,\mathbf{v}(t_i)D_X)$ in the klt case. Indeed, by Proposition \ref{prop:MMP:with:scaling:Jakub} the pair $(Y,\mathbf{v}(t_i-\varepsilon)D)$ is the log canonical  model of $(X,\mathbf{v}(t_i-\varepsilon)D_X)$ when $(X, \mathbf{v}(t_i)D_X)$.
		
		To show representability, consider a stable pair $(Y,\mathbf{v}(t_i-\varepsilon)D)$ which corresponds to a point $p \in \sM_{t_i-\varepsilon}(k)$, and suppose
		$\tau$ is an automorphism of the pair. Let $(X,\mathbf{v}(t_i)D_X)$ be the pair corresponding to $\beta_{t_i}(p)$. Then $\tau$ induces an automorphism $\tau_X$ of $(X,\mathbf{v}(t_i)D_X)$ by functoriality of the construction of $\beta_{t_i}$. We need to prove that $\tau_X=\operatorname{Id} \Longrightarrow \tau=\operatorname{Id}$. This is proved as in \cite[Observation 8.4]{Inc18}, using Lemma \ref{lemma:the:reduction:morphisms:contract:curves:in:D}.
		
		We are left with showing that $\beta_{t_i}$ is birational. To do this, we produce a dense open substack $\calU \subseteq \sM_{t_i}$ such that 
		$
		\beta_{t_i} : \beta_{t_i}^{-1}(\calU) \to \calU
		$
		is an isomorphism.

		First, observe that if we denote with roman letters the coarse moduli spaces, the induced morphism $\beta_{t_i}^c:M_{t_i-\varepsilon} \to M_{t_i}$ is proper as the source is proper and the target is separated. Moreover, $\beta_{t_i}^c$ is quasi-finite, surjective and generically injective, since these are properties
		that we can check on algebraically closed points, they hold for $\beta_{t_i}:\sM_{t_i-\varepsilon} \to \sM_{t_i}$, and we have a bijection $|\sM_t(\Spec(k))| \cong |M_t(\Spec(k))|$ for $k$ algebraically closed. Since we are in characteristic $0$, $\beta_{t_i}^c$ is also generically unramified. Therefore, by Zariski's main theorem, the morphism $\beta_{t_i}^c$ induces an isomorphism of normalizations.
		
		As $\sM_{t_i}, \sM_{t_i-\varepsilon}$, $M_{t_i}$ and $M_{t_i-\varepsilon}$  are reduced and irreducible, there exist normal dense open subsets of all these spaces. Moreover, these can be chosen to fit in the following diagram  
		
		$$\xymatrix{\calV \ar@/^1.5pc/[rrr] \ar[d] \ar[r] & \sM_{t_i-\varepsilon} \ar[d] \ar[r]_{\beta_{t_i}} & \sM_{t_i} \ar[d] & \ar[l] \ar[d] \calU \\ V \ar@/_1.5pc/[rrr] \ar[r] & M_{t_i-\varepsilon} \ar[r]^{\beta^c_{t_i}} & M_{t_i} & \ar[l] U}$$\\
		
		\noindent where the left and right squares are cartesian. Since normalization is an isomorphism on the locus that is already normal, then up to shrinking the open subsets, we can summarize the situation as follows: 
		\begin{itemize}
			\item $U \subset M_{t_i}$ and $V = (\beta_{t_i}^c)^{-1}(U) \subset M_{t_i - \varepsilon}$ are open and dense subsets such that $(\beta_{t_i}^c)_{|V}:V\to U$ is an isomorphism,
			\item $\calV$ (resp. $\calU$) is a normal dense open substack of $\sM_{t_i}$ (resp. $\sM_{t_i - \varepsilon}$), with coarse space $U$ (resp. $V$), and
			\item $\calU$ and $\calV$ are contained in the locus parametrizing klt pairs.
		\end{itemize}
		
		Then the restriction $\beta_{t_i}\big|_{\calV} : \calV \to \calU$ is a representable morphism between normal Deligne--Mumford stacks which is an isomorphism on coarse moduli spaces. We wish to show that this is an isomorphism on the level of stacks. By construction, it induces an isomorphism of coarse spaces and hence a bijection on geometric points. Moreover, since we are in characteristic $0$, then up to shrinking $\calU$ and $\calV$, we may assume that $\beta_{t_i}\big|_{\calV}$ is \'etale. Thus, by \cite[Lemma 3.1]{alpkr}, it suffices to show that the morphism is stabilizer preserving.
		
		As we already know the morphism is representable, we are left with showing surjectivity of automorphism groups. In the notation above, for $p \in \calV(k)$ classifying a klt stable pair $(Y,\mathbf{v}(t_i-\varepsilon)D)$, we need to show that any automorphism of the pair $(X, \bfv(t_i)D_X)$ comes from an automorphism of $(Y,\mathbf{v}(t_i-\varepsilon)D)$. Recall that by construction of $\beta_{t_i}$, $(X, \bfv(t_i)D_X)$ is the canonical model of $(Y, \bfv(t_i)D)$. On the other hand, by Proposition \ref{prop:MMP:with:scaling:Jakub}, $(Y, \bfv(t_i-\varepsilon)D)$ is the canonical model of $(X, \bfv(t_i-\varepsilon)D_X)$ and there is a small birational morphism $Y \to X$.  
		
		In particular, there is an open subscheme $W \subseteq Y$ such that:
		\begin{enumerate}
			\item $W \to X$ is an isomorphism with its image (which we denote with $W_X$);
			\item $W$ is $\mathbb{Q}$-factorial, and
			\item The complement of $W$ and $W_X$ have codimension at least 2 in $Y$ and $X$ respectively.
		\end{enumerate}
		Therefore, $Y=\Proj(\bigoplus_n H^0(W_X,nd(K_{W_X}+\mathbf{v}(t_i-\varepsilon)(D_X)_{|W_X})))$ for $d$ divisible enough. Recall that $\mathbf{v}(t_i)D$ is shorthand for 
		$$
		\sum (t_ia_i+(1-t_i)b_i)D^{(i)}),
		$$
		so any automorphism of $(X,\mathbf{v}(t_i)D)$ fixes the components $D^{(i)}$. In particular, it induces an automorphism of $X$ which sends $D^{(i)}$ to itself. Thus, it induces an automorphism of 
		$$
		Y=\Proj(\bigoplus_n H^0(W_X,nd(K_{W_X}+\mathbf{v}(t_i-\varepsilon)(D_X)_{|W_X}))),
		$$ and since it preserves the $D^{(i)}$ it induces an automorphism of the pair $$(Y,\mathbf{v}(t_i-\varepsilon)D),$$ completing the proof.
	\end{proof}
	\begin{proof}[Proof of Theorem \ref{theorem:reduction:are:iso}] This follows from Theorem \ref{thm:finite}, Theorem \ref{thm:reduction:morphism:birational:for:ZMT} and Zariski's main theorem for representable morphisms of algebraic stacks \cite[Theorem 16.5]{LMB00}. \end{proof}
	
	Next we show that under some natural assumptions, the flip-like morphism $\alpha_{t_i} : \sM_{t_i + \varepsilon} \to \sM_{t_i}$ is also birational.  
	
	\begin{prop}\label{prop_answer_Zsolt_question} Let $t_i$ be an $(\bfa \to \bfb)$-wall and suppose that there exists a dense open substack $\calU \subset \sM_{t_i+\varepsilon}$. Denote by $(\calX_\calU,\bfv(t_i + \varepsilon)\calD_\calU)\to \calU$ the universal $\bfv(t_i+\varepsilon)-$weighted stable family over $\calU$. Suppose that the family $(\calX_\calU,\bfv(t_i)\calD_{\calU})\to \calU$ is also a stable. Then up to shrinking $\calU$, we have that
		\begin{itemize}
			\item $\alpha_{t_i}(\calU)$ is a dense open substack of $\sM_{t_i}$,
			\item $(\alpha_{t_i})_{|\calU}:\calU \to \alpha_{t_i}(\calU)$ is an isomorphism, and
			\item the pull-back of $(\calX_\calU, \bfv(t_i)\calD_\calU) \to \calU$ along $(\alpha_{t_i})^{-1}$ is the universal $\bfv(t_i)-$weighted stable family over $\alpha_{t_i}(U)$. 
		\end{itemize} In particular, $\alpha_{t_i} : \sM_{t_i+\varepsilon} \to \sM_{t_i}$ is birational.
	\end{prop}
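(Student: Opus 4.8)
The key point is that the stability hypothesis forces the flip-like contraction to be trivial on the underlying marked family over $\calU$. Indeed, by Theorem \ref{theorem:flip-like:morphisms}, over the klt locus the morphism $\alpha_{t_i}$ sends a point $u$ to the class of the canonical model of $(\calX_u,\bfv(t_i)\calD_u)$. Since $\bfv(t_i) \le \bfv(t_i+\varepsilon)$, these fibers remain klt, and the assumption that $(\calX_\calU,\bfv(t_i)\calD_\calU)\to\calU$ is stable says precisely that $K_{\calX_u}+\bfv(t_i)\calD_u$ is already ample, so this canonical model is the pair itself. Thus, after shrinking $\calU$ to lie in the klt locus, $\alpha_{t_i}\big|_\calU$ is nothing but the classifying morphism of the stable family $(\calX_\calU,\bfv(t_i)\calD_\calU)$: the underlying marked family $(\calX_\calU;\calD^{(1)},\dots,\calD^{(n)})$ is unchanged and only the coefficients are rescaled from $\bfv(t_i+\varepsilon)$ to $\bfv(t_i)$. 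This already gives the third bullet, since the family inducing $\alpha_{t_i}$ in Theorem \ref{theorem:flip-like:morphisms} is then $(\calX_\calU,\bfv(t_i)\calD_\calU)$, which is therefore the pullback of the universal $\bfv(t_i)$-family along $\alpha_{t_i}$.

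From this description I would extract the two inputs needed to run the coarse-space argument of Theorem \ref{thm:reduction:morphism:birational:for:ZMT}. First, $\alpha_{t_i}\big|_\calU$ is injective on points: if $\alpha_{t_i}(u_1)\cong\alpha_{t_i}(u_2)$ as marked $\bfv(t_i)$-pairs, then the underlying marked families coincide, hence $(\calX_{u_1},\bfv(t_i+\varepsilon)\calD_{u_1})\cong(\calX_{u_2},\bfv(t_i+\varepsilon)\calD_{u_2})$ and $u_1=u_2$ in $\sM_{t_i+\varepsilon}$. Second, $\alpha_{t_i}\big|_\calU$ is stabilizer-preserving in the strongest possible sense: an automorphism of a marked pair is the same datum whether the boundary carries weight $\bfv(t_i)$ or $\bfv(t_i+\varepsilon)$, so the automorphism groups of $u$ and of $\alpha_{t_i}(u)$ literally coincide. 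In particular $\alpha_{t_i}$ is representable.

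The plan is then to mirror Theorem \ref{thm:reduction:morphism:birational:for:ZMT} verbatim. Passing to coarse spaces, the morphism $\alpha^c:M_{t_i+\varepsilon}\to M_{t_i}$ is proper (both spaces are proper), surjective by Corollary \ref{cor:reduction:morphisms:are:onto}, and generically injective by the first property above (using $|\sM_t(k)|\cong|M_t(k)|$); since we work in characteristic $0$ it is generically unramified, so $\alpha^c$ is birational, restricting to an isomorphism $V\xrightarrow{\sim}U$ of dense opens. Intersecting with $\calU$ and with the normal loci, I obtain dense normal opens $\calV\subseteq\sM_{t_i+\varepsilon}$ and $\calU'=\alpha_{t_i}(\calV)\subseteq\sM_{t_i}$ with $\calU'$ open, and $\alpha_{t_i}\big|_\calV:\calV\to\calU'$ representable, quasi-finite, generically injective, hence étale after a further shrinking. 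By the second property it is stabilizer-preserving, so \cite[Lemma 3.1]{alpkr} upgrades it to an isomorphism of stacks. Replacing the given $\calU$ by $\calV$ yields the first two bullets, and birationality of $\alpha_{t_i}$ is immediate since it restricts to an isomorphism on dense opens.

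The main obstacle is exactly the passage from birationality of coarse moduli to an honest isomorphism of open substacks, which needs representability and the stabilizer-preserving property. This is where the hypothesis is essential: it guarantees that lowering the weights leaves both the underlying marked pair and its automorphism group untouched, so the argument of Theorem \ref{thm:reduction:morphism:birational:for:ZMT} applies — in fact it simplifies, since no intermediate small modification $Y\to X$ intervenes here. Everything else, namely properness, surjectivity, generic injectivity, and the compatibility of universal families, is then either immediate from the above description or already established.
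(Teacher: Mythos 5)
Your proposal is correct and follows essentially the same route as the paper's proof: shrink $\calU$ using constructibility of the image on coarse spaces so that $\alpha_{t_i}$ restricts to a map of dense opens, observe that the stability hypothesis makes the fiberwise canonical model the identity so that $\alpha_{t_i}\big|_{\calU}$ is a bijection on geometric points and stabilizer-preserving, and conclude via \'etaleness after shrinking together with \cite[Lemma 3.1]{alpkr}. The only cosmetic difference is that you spell out the coarse-space birationality step in more detail, whereas the paper invokes the argument of Theorem \ref{thm:reduction:morphism:birational:for:ZMT} more briefly; the mathematical content is identical.
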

	
	\begin{remark}\label{remark:alpha:not:finite} Note that in contrast to $\beta_{t_i}$, the flip-like morphism $\alpha_{t_i}$ is not finite in general. This happens already in the case of weighted stable curves \cite{Hassett} where $\alpha_{t_i}$ can contract high dimensional loci parametrizing rational tails with $n \geq 4$ special points which are contracted to a point by the canonical model for coefficients $\bfv(t_i)$. 
	\end{remark}
	
	\begin{remark}\label{remark:alpha:often:birational} The hypothesis of Proposition \ref{prop_answer_Zsolt_question} is often satisfied in practice. For example (see also Remark \ref{rem:situations}), one often begins with a family of pairs over an open base $U$ which are stable for all $t \in [0,1]$ and asks how the stable pairs compactification changes as $t$ varies. In this case, the image of $U$ inside $\sM_{t}$ is constructible by Chevalley's Theorem and dense by construction. Therefore, the image of $U$ contains a dense open substack of $\sM_t$. By assumption, the pairs over this substack are stable for all $t \in [0,1]$ so the hypothesis of the proposition is satisfied. 
	\end{remark}
	
	\begin{proof} We will denote $\alpha_{t_i}$ by $\alpha$ for convenience. 
		
		Let $f : U \to M_{t_i}$ be the coarse space map of the restriction $\alpha_{|\calU}:\calU \to \sM_{t_i}$. Then $f$ is dominant since $\calU$ is dense in $\sM_{t_i + \varepsilon}$ and $\alpha$ is surjective by Corollary \ref{cor:reduction:morphisms:are:onto}. Therefore, the image $f(U)$ is dense and constructible so by e.g. \cite[Chapter 2, Exercises 3.18-3.19]{Har77} there exists a dense open subset 
		$$
		V \subset f(U) \subset M_{t_i}. 
		$$
		Then, defining $\calV := \sM_{t_i} \times_{M_{t_i}} V$ and up to replacing $\calU$ with $\alpha^{-1}(\calV)$, we can assume that:\begin{enumerate}
			\item $\calU$ is an open and dense substack of $\sM_{t_i+\varepsilon}$,
			\item $\calV$ is an open and dense substack of $\sM_{t_i}$, and
			\item $\alpha(\calU) =\calV$ and $\alpha^{-1}(\calV) = \calU$.
		\end{enumerate}
		
		Now we proceed as in the proof of Theorem \ref{thm:reduction:morphism:birational:for:ZMT}. We know that the induced map on $k$-points $\calU(k) \to \calV(k)$ is injective, since by assumption, for any stable pair $(X, \bfv(t_i + \varepsilon)D)$ parametrized by $\calU$ the pair $(X, \bfv(t_i)D)$ is also stable. Moreover, it is surjective since $\alpha(\calU) = \calV$. Thus $\alpha_{|\calU}:\calU \to \calV$ is a morphism between normal separated stacks of finite type in characteristic $0$ which is a bijection on geometric points, so up to shrinking $\calU$ and $\calV$ further, we may assume that it is \'etale. Then by \cite[Lemma 3.1]{alpkr}, to show that $\alpha_{|\calU}$ is an isomorphism, it suffices to show it is stabilizer preserving. But this again follows by assumption since over $\calU$, the pair corresponding to a point $\alpha_{t_i}(p)$ is simply $(X, \bfv(t_i)D)$ where $p$ corresponds to $(X, \bfv(t_i+\varepsilon)D)$. \end{proof}
	
	\begin{cor}\label{cor:reductionbirational}
		Let $\bfb \le \bfa$, and let $f:(\calX,\calD_{\bf a})\to \sM_{\bfa}$ be the universal pair. Suppose that the restriction of $f$ to the generic point is $\bfb-$weighted stable. Then the reduction morphism $\rho_{\bf b,a} : \sN_\bfa \to \sN_\bfb$ is birational.
	\end{cor}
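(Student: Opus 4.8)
The plan is to reduce the statement to the birationality of the individual flip-like morphisms $\alpha_{t_i}$ and then to verify the hypotheses of Proposition \ref{prop_answer_Zsolt_question} at each wall. By Definition \ref{def:reduction}, $\rho_{\bfb,\bfa}$ is the composite
$$
\alpha_{t_0}^\nu \circ (\beta_{t_1}^\nu)^{-1} \circ \alpha_{t_1}^\nu \circ \cdots \circ \alpha_{t_{m-1}}^\nu \circ (\beta_{t_m}^\nu)^{-1}
$$
over the $(\bfa \to \bfb)$-walls $0 = t_0 < \cdots < t_m = 1$. Since Theorem \ref{theorem:reduction:are:iso} gives that each $\beta_{t_i}^\nu$ is an isomorphism, and a composite of birational morphisms of reduced irreducible stacks is birational, it is enough to show that each $\alpha_{t_i}$ (equivalently, since $\alpha_{t_i}$ is dominant by Corollary \ref{cor:reduction:morphisms:are:onto}, each $\alpha_{t_i}^\nu$) is birational.

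First I would upgrade the hypothesis on the generic fiber to stability for all intermediate weights. Writing $(X_\eta, \bfa D_\eta)$ for the fiber of $f$ over the generic point $\eta$ of $\sM_\bfa$, the assumption gives that both $(X_\eta, \bfa D_\eta)$ and $(X_\eta, \bfb D_\eta)$ are stable. Since
$$
K_{X_\eta} + \bfv(t)D_\eta = t(K_{X_\eta} + \bfa D_\eta) + (1-t)(K_{X_\eta} + \bfb D_\eta)
$$
is a convex combination of ample $\mathbb{Q}$-Cartier divisors, it is itself ample and $\mathbb{Q}$-Cartier; and because log canonical discrepancies are affine functions of the coefficients, the slc condition propagates as well. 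Hence $(X_\eta, \bfv(t)D_\eta)$ is stable for every $t \in [0,1]$. Transporting this to the smooth proper base $B$ underlying $\sM_\bfa$ (which dominates $\sM_\bfa$, so that the generic fiber of $(X,\bfa D)\to B$ is a base change of $(X_\eta, \bfa D_\eta)$), openness of the klt and ampleness conditions produces a single dense open $U \subseteq B$ over which $(X, \bfv(t)D)|_U \to U$ is a stable family simultaneously for all $t \in [0,1]$.

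Finally, for each wall $t_i$ and any $s \in (t_i, t_{i+1})$, the image $\Phi_s(U)$ is dense and constructible in $\sM_{t_i+\varepsilon}$, so by Chevalley's theorem it contains a dense open substack $\calU$. Over $\calU$ the universal $\bfv(t_i+\varepsilon)$-weighted family is, fiberwise, one of the pairs $(X_b, \bfv(s)D_b)$ with $b\in U$, which are their own canonical models and stable for all weights; re-weighting to $\bfv(t_i)$ therefore again gives stable fibers, exactly as explained in Remark \ref{remark:alpha:often:birational}. This verifies the hypothesis of Proposition \ref{prop_answer_Zsolt_question}, which then yields that $\alpha_{t_i}$ is birational, completing the argument. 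I expect the main obstacle to be precisely this last point: showing not merely that the re-weighted fibers are stable pairs, but that $(\calX_\calU, \bfv(t_i)\calD_\calU)\to\calU$ is a stable \emph{family} over the dense open $\calU$, which requires descending local stability along the non-smooth morphism $B \to \sM_{t_i+\varepsilon}$ as in the proof of Corollary \ref{cor:finiteness} rather than merely checking it over the smooth base $B$.
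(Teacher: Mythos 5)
Your proposal is correct and follows essentially the same route as the paper's proof: factor $\rho_{\bfb,\bfa}$ into flip-like morphisms, invoke Theorem \ref{thm:reduction:morphism:birational:for:ZMT} for the $\beta_{t_i}$ and Proposition \ref{prop_answer_Zsolt_question} for the $\alpha_{t_i}$. The only difference is that you explicitly verify the hypothesis of Proposition \ref{prop_answer_Zsolt_question} (convexity of ampleness and of discrepancies to get stability of the generic fiber for all intermediate weights, then Chevalley's theorem to produce the dense open $\calU$), which the paper leaves implicit via Remark \ref{remark:alpha:often:birational}; your closing caveat about upgrading fiberwise stability over $\calU$ to stability of the family is handled exactly as you suggest, by the descent argument of Corollary \ref{cor:finiteness}.
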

	\begin{proof}
		Since the normalization of a reduced stack is birational, it suffices to check the desired statement for the rational map $\sM_{\bf a}\dashrightarrow \sM_{\bf b}$. But according to Theorem \ref{theorem:flip-like:morphisms}, this factors as a finite sequence of flip-like morphisms $\sM_s \to \sM_t$. Such morphisms are birational by Theorem \ref{thm:reduction:morphism:birational:for:ZMT} and Proposition \ref{prop_answer_Zsolt_question}, so their composition is birational.
	\end{proof}
	
	We conclude the section with an application of representability of $\beta_t$ to isotrivial families.
	
	\begin{cor} Let $C$ be a smooth and irreducible curve and let $$f : (\calY, \bfv(t_i - \varepsilon)\calD) \to C$$ be a $\bfv(t_i - \varepsilon)$-weighted stable family. Suppose that the $\bfv(t_i)$-weighted stable model over $C$ is isomorphic to a product $(X, \bfv(t_i)D) \times C \to C$. Then the family $f : (\calY, \bfv(t_i - \varepsilon)\calD) \to C$ is also isomorphic to a product.
	\end{cor}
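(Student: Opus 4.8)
The plan is to translate the ``product'' hypothesis into a statement about the moduli map and then exploit representability of $\beta_{t_i}$ to rule out any nontrivial twisting. The family $f$ induces a moduli morphism $\phi : C \to \sM_{t_i - \varepsilon}$, and by Theorem \ref{theorem:flip-like:morphisms} the composite $\beta_{t_i} \circ \phi : C \to \sM_{t_i}$ is the morphism classifying the $\bfv(t_i)$-weighted stable model of $f$. Let $q : \Spec k \to \sM_{t_i}$ be the point classifying $(X, \bfv(t_i)D)$ and let $\pi_C : C \to \Spec k$ be the structure morphism. The hypothesis that the stable model is the product $(X, \bfv(t_i)D) \times C$ means precisely that $\beta_{t_i} \circ \phi$ is $2$-isomorphic to $q \circ \pi_C$; in other words, the moduli map to $\sM_{t_i}$ factors through the point $\Spec k$.

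Next I would form the $2$-fibered product $T := C \times_{\sM_{t_i}} \sM_{t_i - \varepsilon}$, where $C \to \sM_{t_i}$ is taken to be $q \circ \pi_C$. Since $\beta_{t_i}$ is representable (Theorem \ref{thm:reduction:morphism:birational:for:ZMT}), $T$ is an algebraic space; and since $\beta_{t_i}$ is quasi-finite (Theorem \ref{thm:finite}) and proper, the projection $T \to C$ is finite. Because $C \to \sM_{t_i}$ factors through $\Spec k$, associativity of fibre products gives an identification $T \cong C \times_k F$ over $C$, where $F := \Spec k \times_{\sM_{t_i}} \sM_{t_i - \varepsilon} = \beta_{t_i}^{-1}(q)$ is a finite $k$-scheme. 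The data of $\phi$ together with the chosen isomorphism $\beta_{t_i}\circ\phi \cong q \circ \pi_C$ determines, by the universal property of the $2$-fibre product, a section $\sigma : C \to T$ of $T \to C$; under $T \cong C \times_k F$ this section is the graph of a morphism $c : C \to F$.

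Finally I would observe that, as $C$ is integral and connected and $F$ is finite over the algebraically closed field $k$, the morphism $c$ factors through the reduced points of $F$ and, by connectedness of $C$, is constant with image a single reduced $k$-point $p \in F \subseteq \sM_{t_i - \varepsilon}$. Tracing through the identification $T \cong C \times_k F$, the second projection $\mathrm{pr}_2 : T \to \sM_{t_i - \varepsilon}$ composed with $\sigma$ recovers $\phi$ up to $2$-isomorphism, so $\phi$ is $2$-isomorphic to the constant morphism $C \xrightarrow{\pi_C} \Spec k \xrightarrow{p} \sM_{t_i - \varepsilon}$. Pulling back the universal family along this constant morphism then exhibits $(\calY, \bfv(t_i - \varepsilon)\calD) \to C$ as the product of the pair classified by $p$ with $C$, which is the assertion.

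The main obstacle, and the essential point, is the use of representability of $\beta_{t_i}$: this is exactly what guarantees that $T$, and hence the fibre $F$, is a genuine algebraic space rather than a gerbe, so that the section $\sigma$ cannot secretly encode a nontrivial torsor under $\operatorname{Aut}(Y,\bfv(t_i-\varepsilon)D)$. Without it, the same argument would only show that $f$ is isotrivial, not that it is literally a product; representability is what upgrades isotriviality to triviality.
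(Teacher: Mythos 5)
Your proposal is correct and follows essentially the same route as the paper: both arguments factor the moduli map $\varphi : C \to \sM_{t_i-\varepsilon}$ through the fiber $\beta_{t_i}^{-1}(q)$, use representability and quasi-finiteness of $\beta_{t_i}$ (Theorems \ref{thm:finite} and \ref{thm:reduction:morphism:birational:for:ZMT}) to see that this fiber is a finite scheme rather than a gerbe, and then use connectedness of the smooth curve $C$ to conclude that $\varphi$ is constant. Your write-up just makes the $2$-fibre-product bookkeeping explicit where the paper states it more tersely, and your closing observation that representability is exactly what upgrades isotriviality to triviality matches the paper's stated motivation for the corollary.
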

	\begin{proof} Let $\varphi : C \to \sM_{t_i- \varepsilon}$ be the moduli map induced by the stable family $f$. By assumption, the composition $\beta_{t_i} \circ \varphi : C \to \sM_{t_i}$ with $\beta_{t_i}$ is constant, that is, it factors through a closed point $x : \Spec k \to \sM_{t_i}$. Since $C$ is a smooth connected curve, $\varphi$ factors through a connected component of the reduced preimage $(\beta_{t_i}^{-1}(x))_{red}$. By Theorems \ref{thm:finite} and \ref{thm:reduction:morphism:birational:for:ZMT}, $\beta_{t_i}^{-1}(x)$ is a finite and representable over $\Spec k$. Therefore, $(\beta_{t_i}^{-1}(x))_{red}$ is a finite union of points and so $\varphi$ is the constant map. \end{proof}
	
	\section{Examples, counterexamples, and natural questions}\label{section:countereg}
	
	In this section, we discuss several natural generalizations of our main results one might hope for and give examples showing some of these are not possible. 
	
	\subsection{Normalizing the moduli space}\label{sec:normalization} It is natural to ask if Theorem \ref{theorem:reduction:are:iso} holds without taking the normalization of the moduli space $\sM_t$. However, the following example shows that the morphism $\beta_{t_i}$ is not injective in general and thus not an isomorphism. In particular, reduction morphisms $\rho_{\bfb, \bfa}$ can only be well-defined on the normalization of the moduli space. 
	
	We recall the following construction due to Hassett (see \cite[Example 2.41 and 1.42]{kollarmodulibook}). Consider the cone $S \subseteq \mathbb{P}^5$ over the
	degree four rational curve in $\mathbb{P}^4$. This is a surface with an $\mathbb{A}^2/\frac{1}{4}(1,1)$
	singularity on the vertex of the cone, and it can be obtained as a flat degeneration of both $\mathbb{P}^2$ (see \cite[Example 1.42]{kollarmodulibook}) and $\mathbb{P}^1 \times \mathbb{P}^1$. In particular, there are two DVRs, which we will denote by $R_1$ and $R_2$, and two projective families $f_i:\calX_i\to \Spec(R_i)$, so that the special fiber of $f_i$ is isomorphic to $S$, and the generic fiber of $f_1$ is
	isomorphic to $\mathbb{P}^1 \times \mathbb{P}^1$
	whereas the one of $f_2$ is isomorphic to $\mathbb{P}^2$. 
	
	Moreover, there are families of divisors $\calD_i \subseteq \calX_i$
	that can be described as follows. First, fix a natural number $r$ and let $D_0 \subset S$ be the union of $2r$ generic lines through the cone point. Now for $i=1$, the divisor $\calD_1$
	is the union of $r$ lines of one ruling on the generic fiber with divisorial limit $D_0$. Note that in this case $D_0$ is not the flat limit of the generic fiber $(\calD_1)_{\eta_1}$ but merely the divisorial component of the flat limit. Similarly, the divisor $\calD_2$ is $r$ general lines on the generic fiber $\mathbb{P}^2$ with divisorial limit $D_0$. In this case, it turns out that $D_0$ is actually the flat limit $(\calD_2)_{\eta_2}$. The pairs $(\calX_i,\frac{1}{r}\calD_i) \to \Spec(R_i)$ are projective locally stable families with special fiber $(S, \frac{1}{r}D_0)$.
	
	In particular, we can pick an $f_i$-ample hyperplane section $\calH_i\subseteq \calX_i$ as follows. First, choose an $f_i$-ample line bundle $\calL_i$ satisfying $R^1(f_i)_*(\calL_i^{\otimes m})=0$ for every $m>0$.
	Then any section of $H^0((\calL_i)_{|S})$ extends to a section of $H^0(\calL_i)$. This follows
	from the following exact sequence, where $t_i$ is the uniformizer of $\Spec(R_i)$:
	$$0 \to \calL_i \xrightarrow{\cdot t_i} \calL_i \to (\calL_i)_{|S} \to 0.$$
	In \cite[Example 2.41]{kollarmodulibook} it is shown that the divisor $K_{\calX_i}+\frac{1}{r}\calD_i$ is $\mathbb{Q}$-Cartier and anti-ample, so if we choose
	$\calL_i:=\calO_{\calX_i}(-n(K_{\calX_i}+\frac{1}{r}\calD_i))$ for $n$ divisible enough, then $R^1(f_i)_*(\calL_i^{\otimes m})=0$ for $m>0$, and $(\calL_1)_{|S}=(\calL_2)_{|S}$. In particular, we can choose two generic hyperplane sections $\calH_i \subseteq \calX_i$ of an appropriate multiple of $\calL_i$ so that
	$(\calH_1)_{|S}=(\calH_2)_{|S}$ and the divisor $(\calH_1)_{|S}$ avoids the singular locus of $S$ and intersects $D_0$ transversally. In particular, the pair $(S,\frac{1}{r}D_0+ \frac{1}{2}(\calH_i)_{|S})$ is lc, so by inversion of adjunction the morphisms $(\calX_i,\frac{1}{r}\calD_i+\frac{1}{2}\calH_i)\to \Spec(R_i)$ are stable. 
	
	This produces two stable families $(\calX_i,\frac{1}{r}\calD_i+\frac{1}{2}\calH_i) \to  \Spec(R_i)$ such that, if $\eta_i$ is the generic point of $\Spec(R_i)$, then:
	\begin{itemize}
		\item the generic fiber is klt;
		\item the special fibers are the same, but
		\item $K_{(\calX_1)_{\eta_1}}^2 \neq K_{(\calX_2)_{\eta_2}}^2$.
	\end{itemize}
	In particular, let $\mathbf{a}:=(\frac{1}{r},\frac{1}{2})$. Then for every $\varepsilon >0$ the pairs
	$$(\calX_i,(1-\varepsilon)(\frac{1}{r}\calD_i+\frac{1}{2}\calH_i))_{\eta_i}$$ have \emph{different} volumes.
	Therefore their stable limits along $\Spec(R_i)$ are two different points in $\sM_{\mathbf{a}-\varepsilon}$, but they have the same $\bfa$-weighted stable limit and thus map to the same point in $\sM_{\mathbf{a}}$. Therefore, the morphism $\beta_{\mathbf{a}}$ is not injective.
	
	\subsection{Multiple ways to reduce weights}\label{section:line} 
	
	In Theorem \ref{theorem:flip-like:morphisms}, we construct wall-crossing morphisms for the $(\bfa \to \bfb)$-walls along the line segment connecting the two weights vectors. However, the wall-and-chamber structure ultimately is a result of \cite[Theorem E]{BCHM10} which gives a decomposition of a polytope of weight vectors rather than just a line segment. Thus, it is natural to ask how the wall-crossing morphisms behave over the whole polytope. 
	
	In particular, given weight vectors $\bfa \ge \bfb \ge \bfc$ we have reduction morphisms $\rho_{\bfb, \bfa}$, $\rho_{\bfc, \bfb}$ and $\rho_{\bfc, \bfa}$ defined as a composition of flip-like morphisms and their inverses for the straight line segment $(\bfa \to \bfb)$, $(\bfb \to \bfc)$ and $(\bfa \to \bfc)$ respectively. Do these reduction morphisms commute? That is, do we have $\rho_{\bfc, \bfb} \circ \rho_{\bfb, \bfa} = \rho_{\bfc, \bfa}$ in general (see Remark \ref{rem:composition})? Recall that the construction of the flip-like morphisms proceeds by running a minimal model program with scaling as we reduce the coefficients along the corresponding line segment. The example below illustrates that these mmp with scaling do not commute in general and therefore the $\rho$ do not necessarily commute.

	We refer the reader to \cite{Mir89} for the background about on elliptic fibrations (see also \cite{lcmodelelliptic}).
	Consider a Weierstrass elliptic fibration $f:X \to \mathbb{P}^1$ with section $S$ and assume that the fundamental line bundle $\calL$ on $\mathbb{P}^1$ has degree 3. Consider five generic fibers $F_1,...,F_5$ and let $F:=\sum F_i$. Then we
	have that $K_X+(1-\varepsilon)S+ \frac{1}{2}F$ is ample, and the pair $(X,S+\frac{1}{2}F)$ is stable.
	For a suitable $\varepsilon$ small enough, the pair $(X,dS+\frac{d}{2}F)$ is also stable and klt, with $d:=1-\varepsilon$. 
	
	Recall now that
	\begin{itemize}
		\item $K_X=f^*(\omega_{\mathbb{P}^1} \otimes \calL)=f^*(\calO_{\mathbb{P}^1}(1))$, therefore
		\item $-2=(K_X+S).S=K_X.S+S^2=1+S^2$ so $S^2=-3$.
	\end{itemize}
	We aim at reducing the weights on $S$ and on $F$. We first reduce the weight on $S$ from $d$ to $\varepsilon$, for $\varepsilon > 0$ small enough. It is easy to check that the pair $(X,(td+(1-t)\varepsilon)S+\frac{d}{2}F)$ is stable for every $t \in [0,1]$. Now we can reduce the weight on $F$ from $\frac{d}{2}$ to $\frac{1}{5}$. Again, if $\varepsilon$ is small enough, it is easy to check that the pair $(X,\varepsilon S+(t\frac{d}{2}+(1-t)\frac{1}{5})F)$ is stable for every $t \in [0,1]$.

	On the other hand, we can first reduce the weights on $F$ first, and then on $S$. If we reduce the weights on $F$ from $\frac{d}{2}$ to $\frac{2}{5}-\varepsilon$, then $(K_X+dS+(\frac{2}{5}-\varepsilon)F).S<0$. In particular, the section $S$ must be contracted in the stable model. This gives a contraction morphism $g:X \to Y$, and a pseudoelliptic pair $(Y,g_*((\frac{2}{5}-\varepsilon)F))$. We can now keep reducing the weights from $\frac{2}{5}-\varepsilon$ to $\frac{1}{5}$ which produces a stable surface $(Z,D)$ with a contraction morphism $X \to Z$ which factors through $g$. In particular, $X$ and $Z$ are not isomorphic despite being the result of starting with the same $(1-\varepsilon, (1-\varepsilon)/2)$-weighted stable pair and reducing to coefficients $(\varepsilon, \frac{1}{5})$.
	
	One can produce a positive dimensional family of varieties of the above type by considering a Weierstrass fibration defined over the field $\mathbb{C}(t)$. This gives a morphism $\varphi : \Spec(\mathbb{C}(t)) \to \sK_{\mathbf{a}}$ whose closure of its image, assuming that $\varphi$ is non-isotrivial, will be a higher dimensional family of elliptic surfaces with generic fiber as in the example. In this case, the objects parametrized by the interior of the moduli spaces $\sM_t$ in Theorem \ref{theorem:flip-like:morphisms} depends on the chosen path from $\bfa \to \bfc$. 
	
	This shows that the moduli spaces $\sM_t$ and the flip-like morphisms depend \emph{a priori} on the choice of path. However, if we assume that we have a family such that the generic fiber has the same stable model for all coefficient vectors, then we can avoid this issue. More generally, suppose that there exists a polytope $P$ of admissible weight vectors and moduli spaces $\sM_{\bfv}$ of $\bfv$-weighted stable models for each $\bfv \in P$ such that 
	\begin{itemize} 
		\item there are dense open substacks $\calU_{\bfv} \subset \sM_{\bfv}$ with reduction morphisms $r_{\bfb, \bfa} : \calU_{\bfa} \to \calU_{\bfb}$ for $\bfb \le \bfa$, and
		\item for every $\bfc \le \bfb \le \bfa$ in $P$, we have $r_{\bfc, \bfb} \circ r_{\bfb, \bfa} = r_{\bfc, \bfa}$.
	\end{itemize}
	Then since the moduli spaces are separated, we must have that $\rho_{\bfc, \bfb} \circ \rho_{\bfb, \bfa} = \rho_{\bfc, \bfa}$ (see \cite[Lemma 7.2]{DH18}). This applies for example in the hypothesis of Theorem \ref{theorem:summary}.

	More generally, we have proved the following. 
	\begin{theorem}\label{thm:last:thm}
		Let $\pi : (\calX,\calD_1,...,\calD_n) \to \sM_{\bfa}$ be the universal family of pairs over $\sM_{\bfa}$ and assume that for every $\mathbf{v} \le \bfa$ in an admissible polytope of weight vectors $P$ (Definition \ref{def:admissible}), the generic point of $\pi : (\calX,\mathbf{v}\calD) \to \sM_{\bfa}$ is a stable family. Then the moduli spaces $\sM_{\mathbf{v}}$ and morphisms $\rho_{\mathbf{b},\mathbf{c}} \circ \rho_{\mathbf{a},\mathbf{b}}=\rho_{\mathbf{a},\mathbf{c}}$ are well-defined for every pair $\mathbf{c}\le \mathbf{b}$ in $P$.
	\end{theorem}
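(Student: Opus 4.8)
The plan is to reduce the statement to the abstract compatibility principle recorded immediately above, whose hypotheses I will verify and then feed into a separatedness argument. First I would observe that the moduli spaces $\sM_{\bfv}$ and $\sN_{\bfv}$ are well defined for every $\bfv \le \bfa$ in $P$. Indeed, since the generic fiber of $(\calX, \bfv\calD) \to \sM_{\bfa}$ is $\bfv$-weighted stable, $K_{\calX} + \bfv\calD$ is ample, hence big, on the generic fiber; and as the generic $\bfa$-weighted fiber is klt and $\bfv \le \bfa$, the generic $\bfv$-weighted fiber is again klt. Thus Theorem \ref{thm:morphismextends} applies along the segment from $\bfa$ to $\bfv$ and produces a \emph{unique} morphism $\Phi_{\bfv} : B \to \sK_{\bfv}$; by uniqueness, the seminormalized image $\sM_{\bfv}$ and its normalization $\sN_{\bfv}$ depend only on $\bfv$ and not on the path used to reach it. Definition \ref{def:reduction} together with Theorem \ref{theorem:reduction:are:iso} then supplies reduction morphisms $\rho_{\bfv, \bfa} : \sN_{\bfa} \to \sN_{\bfv}$, and likewise $\rho_{\bfc,\bfb}$, for all comparable weights in $P$.

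Next I would isolate a common dense open locus on which the construction becomes transparent. Because stability is an open condition and, by the finiteness of the walls (Theorem \ref{thm:morphismextends}), only finitely many test weights along the segments joining $\bfa, \bfb, \bfc$ are relevant, there is a dense open substack $\calU_{\bfa} \subseteq \sN_{\bfa}$ over which the universal family is $\bfv$-weighted stable for every weight $\bfv$ occurring on these segments. Pushing forward, $\calU_{\bfv} := \rho_{\bfv,\bfa}(\calU_{\bfa})$ is again dense and open (it is constructible by Chevalley and dense since the reduction morphisms are dominant, cf. Corollary \ref{cor:reduction:morphisms:are:onto} and Remark \ref{remark:alpha:often:birational}) and lies in the klt locus. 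Over $\calU_{\bfv}$ the pair $(X, \bfv D)$ is \emph{already} stable, so its canonical model is itself; hence the description of $\alpha_{t_i}, \beta_{t_i}$ and of $(\beta_{t_i}^{\nu})^{-1}$ over the klt locus (Theorem \ref{theorem:flip-like:morphisms} and Proposition \ref{prop_answer_Zsolt_question}) shows that each $\rho_{\bfv, \bfa}$ restricts on $\calU_{\bfa}$ to the tautological relabeling isomorphism $r_{\bfv, \bfa} : (X, \bfa D) \mapsto (X, \bfv D)$, which leaves the underlying marked family unchanged. These relabeling maps manifestly satisfy $r_{\bfc, \bfb} \circ r_{\bfb, \bfa} = r_{\bfc, \bfa}$.

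Finally I would conclude by separatedness. The two morphisms $\rho_{\bfc, \bfb} \circ \rho_{\bfb, \bfa}$ and $\rho_{\bfc, \bfa}$ are both maps $\sN_{\bfa} \to \sN_{\bfc}$ of normal --- in particular reduced and separated --- Deligne--Mumford stacks, and by the previous step they agree with $r_{\bfc, \bfa}$ on the dense open substack $\calU_{\bfa}$. Since a morphism from a reduced stack to a separated one is determined by its restriction to a dense open, the two compositions coincide; this is precisely \cite[Lemma 7.2]{DH18}. The main obstacle is not this last step but the preceding one: identifying the common dense open locus and checking that the abstractly defined $\rho$'s --- assembled from flip-like morphisms and normalizations along a \emph{specific} straight-line path --- restrict there to the naive relabeling maps. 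This is exactly where the hypothesis is indispensable: the elliptic-surface example of Section \ref{section:line} shows that the MMP with scaling, and hence $\rho$, really does depend on the path in general, but that dependence is confined to the boundary where fibers degenerate. Over the locus where every fiber is stable for all weights in $P$ there is nothing to contract, the construction is canonical, and the compositions agree --- which by separatedness forces them to agree everywhere.
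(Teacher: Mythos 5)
Your proposal is correct and follows essentially the same route as the paper: the paper also establishes the hypothesis that the reduction morphisms restrict to tautological relabeling maps $r_{\bfv,\bfa}$ on dense open substacks where every fiber is $\bfv$-stable, notes that these manifestly commute, and then concludes by separatedness via \cite[Lemma 7.2]{DH18}. Your additional remarks on the path-independence of $\Phi_{\bfv}$ and the role of the elliptic-surface example are consistent with the paper's discussion in Section \ref{section:line} and fill in the same details the paper leaves implicit.
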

	
	\subsection{Reduction morphisms are not birational onto its image}\label{sec:notbirationa}

	We give an example where the reduction morphisms are not birational if we do not assume that the generic fiber is stable for each $t \in [0,1]$. As we noted in Theorem \ref{thm:reduction:morphism:birational:for:ZMT}, the morphisms $\beta_{t_i}$ are always birational. The following example shows that the morphisms $\alpha_{t_i}$ are \emph{not} always birational.

	Consider pairs $(\bP^2, C_0 + L_0)$ where
	\begin{itemize}
		\item $C_0$ is a generic curve of degree $d \gg 0$, and
		\item $L_0$ is a generic line. 
	\end{itemize}
	By the genericity assumption, $C_0$ and $L_0$ meet transversely at $d$ points. Let $p \in C_0 \cap L_0$ and let $\mu : X \to \bP^2$ be the blowup of $\bP^2$ at $p$ with exceptional divisor $E$. Let $C$ and $L$ be the strict transforms of $C_0$ and $L_0$ respectively. Consider the pair $(X, aC + aL)$ for some $a \in (0,1)$. We can compute
	\begin{equation}\label{eqn:disc}
		K_X + aC + aL = f^*(K_{\bP^2} + aC_0 + aL_0) + (1-2a)E.  
	\end{equation}
	For $a = \frac{1 + \varepsilon}{2}$ with $0 < \varepsilon \ll 1$, the divisor $K_X + aC + aL$ is ample so $\left(X, \frac{1 + \varepsilon}{2}(C + L)\right)$ is stable. On the other hand, at $a = \frac{1}{2}$ the canonical model contracts the exceptional curve $E$ to obtain $\left(\bP^2, \frac{1}{2}(C_0 + L_0)\right)$ as the log canonical model. 
	
	Let $\calU$ be the moduli space of such pairs $(X, aC + aL)$. Explicitly, we can construct $\calU$ as a $d$-fold cover of
	$$
	\left[U/\mathrm{PGL}_3\right] \subset \left[(H_1 \times H_d)/\mathrm{PGL}_3\right]
	$$
	where $H_i$ is the Hilbert scheme of degree $i$ curves and the $d$-fold cover corresponds to a choice of point in $C_0 \cap L_0$ to blow up. Then $\calU$ is smooth and supports a universal family
	$$
	\left(\calX, a\calC + b\calL\right)
	$$
	of such pairs $(X, aC + aL)$. This is an open subset of the moduli of stable log pairs for $a = \frac{1 + \varepsilon}{2}$ but at coefficient $a = \frac{1}{2}$, the reduction morphism $\rho_{\frac{1}{2}, \frac{1 + \varepsilon}{2}}$ can be identified with the $d$-fold cover
	$$
	\calU \to \left[U/\mathrm{PGL}_3\right]
	$$
	as $\rho$ blows $X$ down to $\bP^2$ and thus forgets about the choice of $p \in C_0\cap L_0$.

	\subsection{Reduction morphisms are not dominant on irreducible components}\label{sec:JK}
	It is natural to ask if the image of the normalization of an irreducible component of $\sK_\bfa$ under a reduction map $\rho_{\bfb, \bfa}$ is the normalization of an irreducible component of $\sK_{\bfb}$. More generally, one can ask if the image of the klt locus under $\rho$ is open. While this is true in dimension one, it fails in higher dimensions. The following example of this phenomena was pointed out to us by J\'anos Koll\'ar, answering questions that appeared in an earlier version of this manuscript. We thank him for allowing us to include it here.
	
	\begin{example}[Koll\'ar]\label{ex:JK}Consider $(Q_0, uL_0 + vL^\prime_0)$, where $Q_0$ is a quadric cone, and both $L$ and $L^\prime$ are lines. If $u=v$ then this pair deforms to $(Q_t, uL_t + vL^\prime_t)$, where $Q_t$ is a smooth quadric cone, and $L_t$ and $L^\prime_t$ are now lines in different families, but if $u \neq v$ then there is no such deformation. Thus, the image of the irreducible component of the moduli space parametrizing such pairs at coefficient $(u,v)$ does not dominate an irreducible component of the moduli space for coefficients $(u,u)$ where $u < v$. 
	\end{example}

	\subsection{Further questions}
	
	We end with a few natural open questions. In the examples of Sections \ref{section:line} and \ref{sec:notbirationa}, what seems to go wrong is that the stable model contracts marked divisors of our pairs. Thus, it is natural to ask if this is the only thing that can go wrong. 
	
	\begin{question} In the setting of Theorem \ref{thm:mainintro}, if we further assume that the stable models of $$(X, \sum a_i D_i) \to B$$ are isomorphic in codimension one for each admissible weight $\bfa$, do the stronger conclusions of Theorem \ref{theorem:summary} hold?
	\end{question}
	
	Finally, we make use throughout of the klt assumption for the generic fiber of our universal family in order to apply the results of \cite{BCHM10} among other things. It is natural to ask the following. 
	
	\begin{question} Do the wall-crossing results of this paper hold if we only assume that the generic fiber of the universal family over the moduli space is log canonical rather than klt?
	\end{question} 
	
	\begin{remark}\label{rmk:abundance}
		It is also natural to ask if the main results of this paper hold when the generic fiber is log canonical while assuming the full minimal model program and abundance. We are using the klt assumption at least in Proposition \ref{prop:MMP:with:scaling:Jakub} and
		Theorem \ref{theorem:flip-like:morphisms} when we used Kawamata--Viehweg vanishing, and in Theorem \ref{thm:morphismextends} where we used \cite{BCHM10}. We do expect our results to hold in the case where the generic fiber of our moduli spaces admit a good minimal model, and will leave it for future exploration.
	\end{remark}

	\bibliographystyle{alpha}
	\bibliography{wallcrossing.bib}

\end{document}